\newcommand{%
    
    \import{figures/}{.pdf_tex}
}[1]{%
    
    \import{figures/}{#1.pdf_tex}
}
\newcommand{\R}{\mathbb{R}}
\newcommand{\Z}{\mathbb{Z}}
\newcommand{\C}{\mathbb{C}}
\newcommand{\N}{\mathbb{N}}
\newcommand{\scH}{\mathscr{H}}
\newcommand{\scG}{\mathscr{G}}
\newcommand{\scC}{\mathscr{C}}
\newcommand{\scF}{\mathscr{F}}
\newcommand{\scL}{\mathscr{L}}
\newcommand{\scQ}{\mathscr{Q}}
\newcommand{\scB}{\mathscr{B}}
\newcommand{\scN}{\mathscr{N}}
\newcommand{\scE}{\mathscr{E}}
\newcommand{\scA}{\mathscr{A}}
\newcommand{\sC}{\mathcal{C}}
\newcommand{\sH}{\mathcal{H}}
\newcommand{\sV}{\mathcal{V}}
\newcommand{\seg}{\mathop\mathrm{seg}}
\newcommand{\rad}{\mathop\mathrm{rad}}
\newcommand{\supp}{\text{supp}}
\newcommand{\diam}{\mathop\mathrm{diam}}
\newcommand{\crd}{\mathrm{crd}}
\newcommand{\prp}{\mathop\mathrm{perp}}
\newcommand{\dist}{\mathop\mathrm{dist}}
\newcommand{\tb}[1]{\tilde{\beta}(#1)}
\newcommand{\spn}{\mathop\mathrm{span}}
\newcommand{\Diam}{\mathop\mathsf{Diam}\nolimits}
\newcommand{\Domain}{\mathop\mathsf{Domain}\nolimits}
\newcommand{\Image}{\mathop\mathsf{Image}\nolimits}
\newcommand{\Edge}{\mathop\mathsf{Edge}\nolimits}
\newcommand{\Line}{\mathop\mathsf{Line}\nolimits}
\newcommand{\Crd}{\mathop\mathsf{Crd}\nolimits}
\newcommand{\Start}{\mathop\mathsf{Start}\nolimits}
\newcommand{\End}{\mathop\mathsf{End}\nolimits}
\newcommand{\Max}{\mathop\mathsf{Max}\nolimits}
\newcommand{\Drift}{\mathop\mathsf{Drift}\nolimits}
\def\XX{{\mathbb{X}}}
\newtheorem{thm}{Theorem}[section]
\newtheorem{lem}[thm]{Lemma}
\newtheorem{cor}[thm]{Corollary}
\newtheorem{prop}[thm]{Proposition}
\newtheorem{maintheorem}{Theorem}
\newenvironment{claim}[1]{\par\noindent\textbf{Claim #1:}}{}
\newenvironment{claimproof}[1]{\par\noindent\textbf{Proof #1:}}{\hfill $\blacksquare$}
\theoremstyle{definition}
\newtheorem{defn}[thm]{Definition}
\newtheorem{remark}[thm]{Remark}
\numberwithin{figure}{section}
\numberwithin{equation}{section}
\title{The Traveling Salesman Theorem for Jordan Curves in Hilbert Space}
\date{}
\author{Jared Krandel}
\begin{document}
\maketitle
\begin{abstract}
     Given a metric space $X$, an Analyst's Traveling Salesman Theorem for $X$ gives a quantitative relationship between the length of a shortest curve containing any subset $E\subseteq X$ and a multi-scale sum measuring the ``flatness'' of $E$. The first such theorem was proven by Jones for $X = \R^2$ and extended to $X = \R^n$ by Okikiolu, while an analogous theorem was proven for Hilbert space, $X = H$, by Schul. Bishop has since shown that if one considers Jordan arcs, then the quantitative relationship given by Jones' and Okikioulu's results can be sharpened. This paper gives a full proof of Schul's original necessary half of the traveling salesman theorem in Hilbert space and provides a sharpening of the theorem's quantitative relationship when restricted to Jordan arcs analogous to Bishop's aforementioned sharpening in $\R^n$.
\end{abstract}
\tableofcontents
\begin{section}{Introduction}

Given a metric space $X$ and a set $E\subseteq X$, how can one tell if there is a curve $\gamma$ of finite length containing $E$? If one does exist, how can one estimate its length in terms of the geometry of $E$, and how can one construct such a curve with length as short as possible? The problem of answering these questions in $X$ is commonly referred to as the \textit{Analyst's Traveling Salesman Problem} for $X$. The study of these problems began when Peter Jones introduced and solved the problem in the standard Euclidean plane $\R^2$ \protect\cite{Jo90}. Okikiolu later extended the result to curves in $\R^n$ \protect\cite{Ok92}, and Schul managed to give an analogue of Okikiolu's and Jones's results in Hilbert space $H$ \protect\cite{Sc07}. Full solutions have been given for sets in Carnot groups \protect\cite{Li22} and graph inverse limit spaces \protect\cite{DS16}, and for Radon measures in $\R^n$ \cite{BS17} and Carnot groups \cite{BLZ22}. Partial results are also available in Banach spaces \cite{BM22a}, \cite{BM22b} and general metric spaces \cite{Ha05}, \cite{DS21}. 

Many authors have also studied traveling salesman-type problems for higher-dimensional sets. This includes H\"{o}lder curves \cite{BNV19},\cite{BZ20}, $C^{1,\alpha}$ surfaces \cite{Ghi20}, lower content $d$-regular sets in $\R^n$ \cite{AS18} and Hilbert space \cite{Hyd21}, analogues of Jordan curves in higher dimensions \cite{Vi20}, and even general sets in $\R^n$ \cite{Hyd22}. Many of these approaches are closely tied to results on parameterization of Reifenberg flat-type sets in $\R^n$ \cite{DT12} and Banach spaces \cite{ENV19}.

One of the central matters in traveling salesman problems is finding a specific quantitative relationship between the Hausdorff measure of the set in question and some measure of its local geometry. The traditional traveling salesman theorems in $\R^n$ and $\ell_2$ provide a relationship which holds for general subsets of the ambient space. Therefore, it seems plausible that one could find a tighter relationship when one restricts their attention to a more geometrically regular class of subsets. A result in this direction was recently achieved by Bishop \cite{Bi22} as part of his study of Weil-Petersson curves \cite{Bi20}. His result is a sharpening of this quantitative relationship for the class of Jordan arcs in $\R^n$. Bishop posed a natural question: Does a similar relationship exist for Jordan arcs in Hilbert space?  

This paper has two primary goals: First, provide a full proof of Schul's necessary condition in the Hilbert space traveling salesman theorem, filling in gaps and correcting errors present in the original presentation in \cite{Sc07}. Second, we answer Bishop's question in the affirmative by providing an analogous sharpening of the Hilbert space traveling salesman theorem when restricted to Jordan arcs.

The proof of our analogue of Bishop's result differs significantly from Bishop's proof because the latter relies heavily on dimension-dependent estimates. We use dimension-independent pieces of Bishop's argument where possible, but largely focus on implementing an extension of the Hilbert space methods introduced in \cite{Sc07}. The first goal is motivated by the discovery of several technical errors in Schul's original proof as presented in \cite{Sc07}. The proof presented here has largely the same outline and general structure as Schul's proof while implementing several new ideas to correct the identified errors. We also mention that the work on the traveling salesman problem in Banach spaces by Badger and McCurdy \cite{BM22a}, \cite{BM22b} provides another proof of the Hilbert space necessary condition via methods which diverge more significantly from ours and those of Schul's original proof.

\begin{subsection}{Overview}
Jones' solution to the traveling salesman problem in $\R^2$ is based on measuring how close a subset $E\subseteq \R^2$ is to being linear locally. In order to do this, he defined what is now called \textit{Jones' beta number}. 
\begin{defn}[\textit{Jones' beta number}]
Fix a Hilbert space $H$ and let $E,Q\subseteq H$ where $Q$ has finite diameter. We define the $\beta$-number for $E$ in the ``window'' $Q$ by
\begin{equation*}
    \beta_{E}(Q) \vcentcolon= \inf_L\sup_{x\in Q\cap E}\frac{\text{dist}(x,L)}{\diam(Q)},
\end{equation*}
where $L$ ranges over all affine lines in $H$.
\end{defn}
 One can interpret the number $\beta_E(Q)\diam(Q)$ as the radius of the minimal width cylinder in $H$ which contains the set $E\cap Q$. The factor of $\diam(Q)$ on the right-hand side in the definition ensures that $\beta_E(Q)$ is scale-invariant. We always have $0 \leq \beta_E(Q) \leq 1$ where $\beta_E(Q) = 0$ implies $E\cap Q\subseteq L$ for some line $L$ while $\beta_E(Q) \geq \epsilon$ for some constant $\epsilon > 0$ implies that for every choice of $L$, there exists a point in $E\cap Q$ of distance $\epsilon\diam(Q)$ from $L$. Jones and Okikiolu used these numbers to characterize subsets of rectifiable curves in $\R^2$ and $\R^n$ respectively in the following theorem:
\begin{thm}{(Jones for $n=2$ \protect\cite{Jo90}, Okikiolu for $n\geq 2$ \protect\cite{Ok92})}\label{t:Rn-tst}
Let $E\subseteq \R^n$. $E$ is contained in a rectifiable curve if and only if
\begin{equation*}
    \beta_E^2(\R^n) \vcentcolon= \diam(E) + \sum_{Q\in\Delta(\R^n)}\beta_{E}(3Q)^2\diam(Q) < \infty
\end{equation*}
where $\Delta(\R^n)$ is the set of all dyadic cubes in $\R^n$ and $3Q$ is the cube with the same center as $Q$ but three times the side length. If $\Sigma$ is a connected set of shortest length containing $E$, then
\begin{equation}\label{e:Rn-necessary}
    \beta_\Sigma^2(\R^n) \lesssim_n \sH^1(\Sigma)
\end{equation}
and
\begin{equation}\label{e:Rn-sufficient}
      \beta_E^2(\R^n) \gtrsim_n \sH^1(\Sigma).
\end{equation}
\end{thm}
It is important to take $3Q$ rather than $Q$ so that the family $\{3Q\}_{Q\in\Delta(\R^n)}$ ``covers'' $\R^n$ sufficiently well. More precisely, any subset $B\subseteq \R^n$ is contained in a cube $3Q$ of comparable diameter, while there may not exist such a standard dyadic cube $Q$ with this property. The exponent $2$ appears in Theorem \ref{t:Rn-tst} because the Pythagorean theorem allows one to estimate the difference in length between a line segment and a slight perturbation of the segment by a small distance $d$ in a perpendicular direction by a factor proportional to $d^2$ (See Remark \ref{rem:pythagorean-theorem}). We recommend the reader sees the introduction of \cite{BM22a} for further intuition on the behavior of $\beta$-numbers for subsets of rectifiable curves in $H$ (and in Banach spaces). 
\begin{remark}[The Pythagorean theorem and triangle inequality excess]\label{rem:pythagorean-theorem}
\begin{figure}[h]
    \centering
    \includegraphics[scale=0.82]{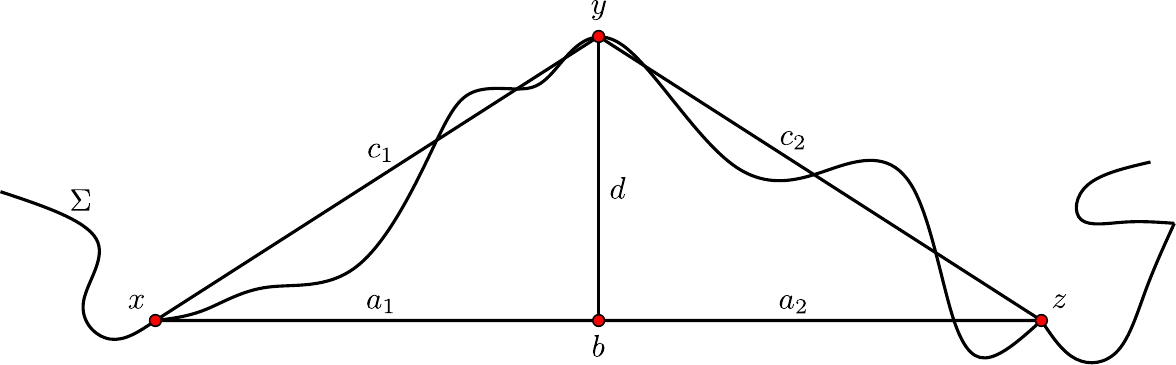}
    \caption{}
    \label{fig:Pythagorean-theorem}
\end{figure}
Let $x,y,z\in\Sigma$. Applying the Pythagorean theorem in Figure \ref{fig:Pythagorean-theorem} gives $d^2 = c_i^2 - a_i^2 = (c_i-a_i)(c_i+a_i)$ for $i=1,2$. If we assume that $R>0$ is such that $c_i\simeq a_i\simeq R$, then
\begin{equation*}
    2d^2 = (c_1-a_1)(c_1+a_1) + (c_2-a_2)(c_2+a_2) \simeq 2R(c_1+c_2-a_1-a_1).
\end{equation*}
If we further assume that there exists a dyadic cube $Q$ such that $x,y,z\in 3Q, \diam(Q)\simeq R$, and $d\simeq\beta_\Sigma(Q)\diam(Q)$, then
\begin{equation*}
    \beta_\Sigma(Q)^2\diam(Q) \simeq c_1 + c_2 - a_1 - a_2 = |x-y|+|y-z|-|x-z|.
\end{equation*}
This final equality demonstrates why beta numbers are often said to measure the \textit{triangle inequality excess} inside a cube $3Q$.
\end{remark}

Bishop's results say that if we restrict our attention to \textit{Jordan arcs} then \eqref{e:Rn-necessary} and \eqref{e:Rn-sufficient} can be improved.
\begin{defn}[\textit{Jordan arcs and curves}]
For a metric space $X$, we say that $\Gamma\subseteq X$ is a \textit{rectifiable arc} if $\sH^1(\Gamma) < \infty$ and there exists a surjective continuous map $\gamma:I\rightarrow\Gamma$ for some closed interval $I\vcentcolon=[a,b]\subseteq\R$. We also refer to the map $\gamma$ as a rectifiable arc. Whenever we refer to such $\Gamma$, we implicitly have a particular parameterizing map $\gamma$ in mind and really mean the pair $(\Gamma,\gamma)$. We refer to the points $\gamma(a),$ and $\gamma(b)$ as the \textit{endpoints} of $\gamma$ (or $\Gamma$) and we define
\begin{equation*}
    \crd(\Gamma) \vcentcolon= \crd(\gamma) \vcentcolon= \dist(\gamma(a),\gamma(b)).
\end{equation*}
We refer to $\crd(\Gamma)$ as the \textit{chord length} of $\Gamma$ and refer to the line (segment) which passes through the two endpoints of $\Gamma$ as its \textit{chord} or its \textit{chord line (segment)}. We additionally define
\begin{equation*}
    \ell(\Gamma)\vcentcolon=\ell(\gamma)
\end{equation*}
as the length of $\Gamma$ (also see \ref{e:param-length}). We call $\Gamma$ a \textit{Jordan arc} if we can take $\gamma$ to be bijective. We call $\Gamma$ a \textit{Jordan curve} if there exists a map $\gamma:[a,b]\rightarrow \Gamma$ which is injective on $[a,b)$, but has $\gamma(a) = \gamma(b)$. In general, any rectifiable arc $\gamma$ with $\gamma(a) = \gamma(b)$ is called \textit{closed}. For Jordan arcs and curves $\ell(\Gamma) = \sH^1(\Gamma)$, but this does not hold for general rectifiable arcs (for more on this, see Remark \ref{rem:gen-arcs}).
\end{defn}
The chord length is an integral part of Bishop's following improvement:
\begin{thm}[\protect\cite{Bi22} Theorem 1.2] \label{t:bistst}
Let $\Gamma\subseteq \R^n$ be a Jordan arc. Then
\begin{equation}\label{e:bistst}
    \sum_{Q\in\Delta(\R^n)}\beta_\Gamma(3Q)^2\diam(Q) \simeq_n \ell(\Gamma) - \crd(\Gamma).
\end{equation}
\end{thm}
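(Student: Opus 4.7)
The theorem splits into two inequalities: direction (a), $\sum_{Q\in\sD^n}\beta_\Gamma(Q)^2\diam(Q)\lesssim\ell(\Gamma)-\crd(\Gamma)$, and direction (b), $\ell(\Gamma)-\crd(\Gamma)\lesssim\sum_{Q\in\sD^n}\beta_\Gamma(Q)^2\diam(Q)$. Direction (b) can be derived from the Jones--Okikiolu theorem together with an estimate for $\diam(\Gamma)-\crd(\Gamma)$, while direction (a) is genuinely stronger than the classical upper bound $\sum\beta^2\diam\lesssim\ell(\Gamma)$ and constitutes the main content.

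The starting point for direction (a) is the arclength identity: for a Jordan arc $\tau\colon[0,L]\to\R^n$ parametrized by arclength with $x=\tau(0)$ and $y=\tau(L)$,
\begin{equation*}
    L^2-|x-y|^2 \;=\; \tfrac{1}{2}\int_0^L\!\!\int_0^L |\tau'(s)-\tau'(t)|^2\,ds\,dt.
\end{equation*}
Combined with the elementary perpendicular-drop estimate $L-|x-y|\gtrsim h^2/|x-y|$, where $h$ is the largest distance from $\tau$ to the chord $[x,y]$, this yields the local bound $\ell(\tau)-|x-y|\gtrsim\tilde\beta(\tau)^2\diam(\tau)$ whenever $|x-y|\simeq\diam(\tau)$, with the out-and-back regime handled directly. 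For each dyadic cube $Q$ meeting $\Gamma$ one picks a representative subarc $\tau_Q\in\Lambda(Q)$ whose $\tilde\beta(\tau_Q)$ is comparable to $\beta_\Gamma(Q)$.

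The main obstacle is that a given portion of $\Gamma$ contributes to cubes at many scales, so a naive sum of the local estimates overcounts the excess length. My plan is to organize the cubes through a martingale-type decomposition: at each dyadic scale $2^{-k}$, fix a polygon $P_k$ inscribed in $\Gamma$ whose vertices form a maximal net matched to cubes of side $2^{-k}$. A scale-by-scale geometric comparison should give $\sum_{Q\in\sD^n_k}\beta_\Gamma(Q)^2\diam(Q)\lesssim\ell(P_{k+1})-\ell(P_k)$, and these increments telescope over $k$ down to a coarsest scale where $\ell(P_{k_0})\geq\crd(\Gamma)$, producing a total bound by $\ell(\Gamma)-\crd(\Gamma)$. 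The orthogonality of successive polygonal refinements, measured in the $L^2$ sense of the tangent-variance identity above, is what produces the square in $\beta^2$. Making this telescoping rigorous, and in particular handling cubes whose $\Lambda(Q)$ contains several arcs or whose $\beta$-minimizing line is very different from the chord of any single arc, is the principal technical step I expect to consume the most effort; a corona-style stopping-time decomposition separating almost-flat from non-flat arcs may be needed to organize the two regimes.

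For direction (b), apply the Jones--Okikiolu theorem to $E=\Gamma$ to get $\ell(\Gamma)\lesssim\diam(\Gamma)+\sum\beta^2\diam$, after which it remains to show $\diam(\Gamma)-\crd(\Gamma)\lesssim\sum\beta^2\diam$. Let $p,q\in\Gamma$ realize $\diam(\Gamma)$ and let $Q_0$ be a dyadic cube of side $\simeq\diam(\Gamma)$ containing $\Gamma$. If $\crd(\Gamma)\simeq\diam(\Gamma)$, a Pythagorean estimate bounds $\diam(\Gamma)-\crd(\Gamma)$ by $(h_p+h_q)^2/\crd(\Gamma)$ where $h_p,h_q$ are the perpendicular distances of $p,q$ from the line through the endpoints, and these are in turn bounded by $\beta_\Gamma(Q_0)\diam(Q_0)$. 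If instead $\crd(\Gamma)\ll\diam(\Gamma)$, then $\beta_\Gamma(Q_0)\simeq 1$ and $\beta_\Gamma(Q_0)^2\diam(Q_0)\gtrsim\diam(\Gamma)\geq\diam(\Gamma)-\crd(\Gamma)$ directly.
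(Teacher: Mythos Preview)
This theorem is quoted from \cite{Bi20} and is not proved in the present paper; the paper proves the Hilbert-space analogues (Theorems A and B). Nevertheless, Section~5 follows Bishop's argument for the lower bound closely enough that one can compare your direction~(b) against it, and there your proposal has a genuine gap.

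Your reduction to bounding $\diam(\Gamma)-\crd(\Gamma)$ is correct, but the single-cube Pythagorean estimate in Case~1 does not work. You claim that if $p,q$ realise $\diam(\Gamma)$ then $\diam(\Gamma)-\crd(\Gamma)\lesssim (h_p+h_q)^2/\crd(\Gamma)$ with $h_p,h_q$ the distances of $p,q$ from the chord line, hence $\lesssim\beta(Q_0)^2\diam(Q_0)$. This inequality is only valid when the orthogonal projections of $p$ and $q$ onto the chord line fall inside the segment between the endpoints. In general they need not: an arc can overshoot an endpoint in the chord direction while staying in an arbitrarily thin tube, so that $\beta(Q_0)$ is as small as one likes while $\diam(\Gamma)-\crd(\Gamma)$ stays of fixed positive size. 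One term of the $\beta^2$-sum cannot absorb this. Bishop's actual argument (mirrored in Section~5 here) is considerably more involved: one writes $\ell(\Gamma)-\crd(\Gamma)\le[\ell(\Gamma)-\diam(\Gamma)]+\ell(\gamma_1)+\ell(\gamma_2)$ for certain ``end'' subarcs $\gamma_1,\gamma_2$, applies the weaker bound $\ell-\diam\lesssim\sum\beta^2\diam$ three times, and then shows $\diam(\gamma_i)\lesssim\sum\beta^2\diam$ via a nested sequence of cubes together with a tangent-point/crossing argument (the analogues of Lemmas~5.3 and~5.4). Your Case~2 is fine, but Case~1 needs this extra machinery.

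For direction~(a), your telescoping-polygon plan is the right skeleton (it is essentially how the non-flat contributions are handled), but you have correctly flagged the real obstacle: cubes whose $\beta$ is produced by several nearly-flat arcs rather than a single bent one. This is exactly the $\Delta_1,\Delta_{2.1}$ analysis in Section~4 of the present paper (and the corresponding part of Bishop's proof), and it is not a matter of routine bookkeeping---it requires new geometric input beyond the polygon refinement, so as written your plan for (a) is an outline rather than a proof.
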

This result has the following corollary for Jordan curves:
\begin{cor}[\cite{Bi22} Corollary 1.3]
If $\Gamma\subseteq\R^n$ is a Jordan curve, then 
\begin{equation*}
    \sum_{Q\in\Delta(\R^n)}\beta_\Gamma(3Q)^2\diam(Q) \simeq_n\ell(\Gamma).
\end{equation*}
\end{cor}
One can think of deriving this by applying Theorem \ref{t:bistst} while taking $\crd(\Gamma)\rightarrow0$. The significance of the inequalities in \eqref{e:bistst} is easier to see if we compare each directly with its corresponding inequality in Theorem \ref{t:Rn-tst}. 
\begin{remark}[The $\lesssim$ improvement in \eqref{e:bistst}]\label{rem:lesssim}
The inequality \eqref{e:Rn-necessary} and the $\lesssim$ direction of \eqref{e:bistst} applied to a Jordan arc $\Gamma$ are respectively equivalent to
\begin{align}
    \sum_{Q\in\Delta(\R^n)}\beta_\Gamma(3Q)^2\diam(Q) &\leq C_1(n)\ell(\Gamma), \label{e:jones-nec}\\ 
    \sum_{Q\in\Delta(\R^n)}\beta_\Gamma(3Q)^2\diam(Q) &\leq C_2(n)(\ell(\Gamma) - \crd(\Gamma))\label{e:bis-nec}
\end{align}
for some constants $C_1(n),C_2(n) > 0$ depending on $n$ where we used the fact that $\diam(\Gamma) \leq \ell(\Gamma)$. This means \eqref{e:bistst} improves the inequality by replacing $\ell(\Gamma)$ with $\ell(\Gamma)-\crd(\Gamma)$ on the right-hand side. One can see how this improvement manifests by considering the case when $\Gamma$ is a line segment. In this case, $\beta_{\Gamma}(3Q) = 0$ for all $Q\in\Delta(\R^n)$ while $\crd(\Gamma) = \ell(\Gamma)$. This means \eqref{e:jones-nec} becomes $0 \leq C_1(n)\ell(\Gamma)$ while \eqref{e:bis-nec} becomes $0 \leq 0$. Bishop's improvement pulls the slack out of Jones and Okikiolu's inequality by recognizing that $\crd(\Gamma)$ is a global measure that accounts for how much $\Gamma$ looks like its chord segment: the beta numbers do not see the ``component'' of $\Gamma$ along its chord.
\end{remark}

\begin{remark}[The $\gtrsim$ improvement in \ref{e:bistst}]\label{rem:gtrsim}
The inequality \eqref{e:Rn-sufficient} and the $\gtrsim$ direction of \eqref{e:bistst} applied to a Jordan arc $\Gamma$ and $E\subseteq \Gamma$ are respectively equivalent to
\begin{align}
    (1+\delta)\diam(E) + C(\delta,n)\sum_{Q\in\Delta(\R^n)}\beta_E(3Q)^2\diam(Q) &\geq \ell(\Gamma), \label{e:jones-suf}\\
    \crd(\Gamma) + C_2(n)\sum_{Q\in\Delta(\R^n)}\beta_\Gamma(3Q)^2\diam(Q) &\geq \ell(\Gamma).\label{e:bis-suf}
\end{align}
where in \eqref{e:jones-suf}, $\delta > 0$ is arbitrary but $C(\delta,n)\rightarrow\infty$ as $\delta\rightarrow 0$. This is not exactly as was stated in \eqref{e:Rn-sufficient}, but Jones shows that one can arrange the inequality this way. This means \eqref{e:bis-suf} changes \eqref{e:jones-suf} by replacing the $C_1(n)\diam(E)$ term on the left-hand side with $\crd(\Gamma)$ and exchanges the $\beta_E$ numbers for the (larger) $\beta_\Gamma$ numbers. 

Bishop \cite{Bi22} shows how this change manifests by considering the example set $E = \{0,1,i\beta\}\subseteq\C$ with $0 < \beta \ll 1$. One can calculate that $\sum_{Q\in\Delta(\R^2)}\beta_E(3Q)^2\diam(Q) \leq C\beta^2$ while $\diam(E) = \sqrt{1 + \beta^2} \leq 1 + c\beta^2$ and the shortest curve $\Gamma = [i\beta,0]\cup[0,1]$ containing $E$ satisfies $\ell(\Gamma) = 1 + \beta$. By taking $\beta\rightarrow 0$, this configuration gives a family of examples showing that one cannot take $\delta = 0$ in \eqref{e:jones-suf}. The only cubes $Q$ which contribute to $\sum\beta_E(3Q)^2\diam(Q)$ are those for which $E\subseteq 3Q$ because $E$ only contains three points total. But, for any cube $Q$ which contains $0$, $\beta_\Gamma(3Q) \gtrsim 1$ so that one can show that $\sum\beta_\Gamma(3Q)^2\diam(Q) \geq 1 + c\beta$. The point is that the connectedness of $\Gamma$ adds more geometry with more locations and scales to measure curvature at, increasing the contribution of the $\beta$-numbers and loosening the requirement on the $\diam(E)$ term.
\end{remark}

In each of the previously stated traveling salesman theorems, the implicit constants in the given inequalities increase exponentially as $n\rightarrow\infty$. The constants' exponential blowup can be attributed to the exponential increase in the relative number of dyadic cubes on each scale as $n$ increases. To formulate a version of the traveling salesman theorem in Hilbert space $H$, Schul invented a replacement for the set of dyadic cubes called a \textit{multiresolution family}.
\begin{defn}[\textit{Multiresolution family}]
Fix a connected set $\Sigma\subseteq H$. For $\epsilon > 0$, we call a set $E\subseteq\Sigma$ an \textit{$\epsilon$-net} of $\Sigma$ if both
\begin{enumerate}[label=(\roman*)]
    \item For all $x,y\in E$, $|x-y| > \epsilon$, \text{and} \label{i:net-space}
    \item $\Sigma \subseteq \bigcup_{x\in E}B(x,\epsilon)$ \label{i:net-cover}
\end{enumerate}
Any subset satisfying \ref{i:net-space} can be extended to an $\epsilon$-net since it can be extended to satisfy \ref{i:net-cover} by adding a maximal number of appropriately spaced points. Fixing an integer $n_0$, let $X_{n_0}\subseteq \Sigma$ be a $2^{-n_0}$-net. The extension property implies the existence of a sequence of $2^{-n}$-nets $\{X_{n}\}$ satisfying $X_{n}\subseteq X_{n+1}$. Fix a constant $A > 1$ and put
\begin{equation}\label{e:mrf}
    \scH \vcentcolon= \{B(x,A2^{-n}):x\in X_{n},\ n\in\Z,\ n\geq n_0\}
\end{equation}
where $B(x,A2^{-n})$ is the closed ball of radius $A2^{-n}$ around $x$. We call $\scH$ a \textit{multiresolution family} for $\Sigma$ and refer to an element $Q\in\scH$ as a ball. Given $Q = B(x,A2^{-n})$, define $x_Q\vcentcolon=x$ and $\rad(Q) \vcentcolon= A2^{-n}$. For $\lambda > 0$, we let $\lambda Q \vcentcolon= B(x_Q,\lambda\rad(Q))$. The starting point $n_0$ is of no consequence; all of the results obtained will be independent of it.
\end{defn}
\begin{remark}[Reduction to $H = \ell_2$]
If $\Sigma\subseteq H$ is a closed, connected set with $\sH^1(\Sigma) < \infty$, then $\Sigma$ is compact and hence a separable subset of $H$. This gives the existence of a countable set of vectors $v_1,v_2,\ldots$ such that $\Sigma\subseteq \overline{\spn\{v_1,v_2,\ldots\}}=\vcentcolon V$. Hence, $\Sigma$ is contained in the separable subspace $V\subseteq H$ which is isometric (via a linear transformation) to $\ell_2$. Therefore, it suffices to fix $H = \ell_2$ in the following theorems concerning Hilbert space.
\end{remark}
The important difference between a multiresolution family and the set of dyadic cubes is that the former is centered on the set, while the latter is a partition of the ambient space. In infinite dimensional space (and general metric spaces), it is necessary to concentrate on the intrinsic properties of the set in question rather than where the set happens to lie relative to pre-defined pieces of the ambient space. We can now state Schul's result:
\begin{thm}[\protect\cite{Sc07} Theorem 1.1, Theorem 1.5] \label{t:hstst}
Let $E\subseteq \ell_2$ and let $\scH$ be a multiresolution family for $E$ with inflation factor $A > 200$. Then $E$ is contained in a rectifiable curve if and only if 
\begin{equation*}
    \beta_E^2(\scH) \vcentcolon= \diam(E) + \sum_{Q\in\scH}\beta_E(Q)^2\diam(Q) < \infty.
\end{equation*}
If $\Sigma\subseteq H$ is a connected set of shortest length containing $E$, then 
\begin{equation}\label{e:HS-nec}
    \beta_\Sigma^2(\scH) \lesssim_A \sH^1(\Sigma)
\end{equation}
and
\begin{equation}\label{e:HS-suf}
     \sH^1(\Sigma) \lesssim_A \beta_E^2(\scH).
\end{equation}
\end{thm}
The exponent 2 can again be attributed to the fact that the Pythagorean theorem holds in Hilbert space. The inflation factor $A$ given in the definition of $\scH$ is the analogue of taking $3Q$ rather than $Q$ in the Euclidean space traveling salesman theorems. Schul's proof of \eqref{e:HS-suf} closely parallels Jones' constructive proof of \eqref{e:Rn-sufficient}, replacing dimension-dependent estimates with dimension-independent estimates needed. We mention here that Badger, Naples, and Vellis provide a refined constructive proof of this result in \cite{BNV19} which produces a nice sequence of parameterizations which they used to prove traveling salesman sufficient conditions for H\"{o}lder curves. 

On the other hand, Schul's proof of \eqref{e:HS-nec} differs significantly from Jones's original proof, incorporates some key ideas from Okikiolu's proof in $\R^n$, and introduces several ingenious new constructions to remove dimension-dependent estimates. Unfortunately, several errors have since been discovered in the original presentation in \cite{Sc07}, leaving gaps in the proof. The results of this paper will fill in these gaps, providing a full proof of \eqref{e:HS-nec} in parallel with the following new results:
\begin{maintheorem}\label{t:thmA}
Let $\Gamma\subseteq \ell_2$ be a Jordan arc. For any multiresolution family $\scH$ associated to $\Gamma$ with inflation factor $A > 200$, we have
\begin{equation}\label{e:thmA}
    \sum_{Q\in\scH}\beta_\Gamma(Q)^2\diam(Q) \lesssim_A \ell(\Gamma) - \crd(\Gamma).
\end{equation}
\end{maintheorem}
The second main result is the other side of the inequality in Theorem \ref{t:bistst} for $\ell_2$:
\begin{maintheorem}\label{t:thmB}
Let $\Gamma\subseteq \ell_2$ be a Jordan arc. For any multiresolution family $\scH$ associated to $\Gamma$ with inflation factor $A > 30$, we have
\begin{equation}\label{e:thmB}
    \sum_{Q\in\scH}\beta_\Gamma(Q)^2\diam(Q) \gtrsim_A \ell(\Gamma) - \crd(\Gamma).
\end{equation}
\end{maintheorem}
These results are to Hilbert space what Bishop's Theorem \ref{t:bistst} is to Euclidean space. One can again look to Remarks \ref{rem:lesssim} and \ref{rem:gtrsim} to gain intuition about the nature of these improvements over the estimates in Theorem \ref{t:hstst}. 
\begin{remark}[General rectifiable arcs]\label{rem:gen-arcs}
Theorems \ref{t:thmA} and \ref{t:thmB} raise a natural question: Do similar results hold for general rectifiable arcs? In this case, we must be careful about the definitions. If $\gamma:[0,\ell(\gamma)]\rightarrow\Sigma$ is any constant arc length parameterization of a compact, connected set $\Sigma\subseteq\ell_2$, then we interpret $\ell$ as the pushforward of Lebesgue measure onto $\Sigma$ which does not necessarily coincide with $\sH^1|_{\Sigma}$ as it does for a Jordan arc or curve. If $\Gamma'$ is a rectifiable arc, then $\ell(\Gamma') \geq \sH^1(\Gamma')$, so Theorem \ref{t:thmA} is weaker than the more natural inequality
\begin{equation}\label{e:rect-arc-beta}
    \sum_{Q\in\scH}\beta_{\Gamma'}(Q)^2\diam(Q) \lesssim_A \sH^1(\Gamma') - \crd(\Gamma').
\end{equation}
Whether or not \eqref{e:rect-arc-beta} holds remains open. In Remark \ref{rem:rect-almost-flat} we give some ideas on how one might modify some of our methods in this direction.
\end{remark}

\end{subsection}
\begin{subsection}{Related Results and Questions}
\begin{subsubsection}{Weil-Petersson Curves}
Theorem \ref{t:bistst} arose as an improvement to the traveling salesman theorem necessary to connect some of the geometric characterizations of \textit{Weil-Petersson} curves discovered in \cite{Bi20} (a few out of 26 total definitions given!). The Weil-Petersson curves are defined to be the closure of smooth curves in $\R^2$ in the Weil-Petersson metric on universal Teichm\"{u}ller space introduced in \cite{TT06} by Takhtajan and Teo for studying problems related to string theory. This class of curves has also been studied in relation to computer vision \cite{FKL14}, \cite{FN17}, \cite{SM06}, and Schramm-Loewner evolutions \cite{Wa19a}, \cite{Wa19b}.

The following result gives the aforementioned characterizations when $n=2$. We say that a curve $\Gamma$ is \textit{chord-arc} if any two points $x,y\in \Gamma$ are connected by a subarc $\gamma \subseteq\Gamma$ with $\ell(\gamma) \leq C|x-y|$ for some constant $C$ independent of $x$ and $y$.
\begin{thm}[\cite{Bi22} Theorem 1.4]\label{t:bis-tst-char}
The following are equivalent for a closed Jordan curve $\Gamma\subseteq\R^n,\ n\geq 2$,
\begin{enumerate}[label=(\roman*)]
    \item $\Gamma$ satisfies
    \begin{equation*}
        \sum_{Q\in\Delta(\R^n)}\beta_\Gamma(3Q)^2 < \infty.
    \end{equation*}
    \label{i:beta-squared}
    
    \item $\Gamma$ is chord-arc, and for any dyadic decomposition of $\Gamma$, the inscribed polygons $\{\Gamma_n\}$ defined by taking the $n$-th generation points as vertices satisfy
    \begin{equation*}
        \sum_{n=1}^\infty 2^n[\ell(\Gamma) - \ell(\Gamma_n)] < \infty
    \end{equation*}
    with a bound that is independent of the choice of decomposition.\label{i:dyadic-decomp}
    
    \item $\Gamma$ has finite M\"{o}bius energy. That is,
    \begin{equation*}
        \mathrm{\mathop{M\ddot{o}b}}(\Gamma)\vcentcolon=\int_\Gamma\int_\Gamma\left(\frac{1}{|x-y|^2} - \frac{1}{\ell(x,y)^2}\right)dxdy < \infty
    \end{equation*}
    where $\ell(x,y)$ is the length of the shortest arc contained in $\Gamma$ connecting $x$ and $y$ and the integration is with respect to arc length measure. \label{i:mob-energy}
\end{enumerate}
\end{thm}
The M\"{o}bius energy in \ref{i:mob-energy} was one of several functionals introduced by O'Hara to study knots \cite{Ju91}, \cite{Ju92}. One can interpret \ref{i:beta-squared} as a bound on the total curvature of the curve $\Gamma$ over all locations and scales. The missing factor of $\diam(Q)$ when compared with the sums that appear in the traveling salesman theorems makes this condition much harder to satisfy in general. For instance, a curve satisfying \ref{i:beta-squared} cannot have a ``corner'' (conical type singularity) because this would give an infinite collection of cubes $Q$ such that $\beta_\Gamma(3Q) \gtrsim 1$. In \ref{i:dyadic-decomp}, a dyadic decomposition is an ordered collection of points contained in $\Gamma$ which divide $\Gamma$ into $2^n$ intervals of equal length. We let $\Gamma_n$ be the polygon with these points as vertices. Hence, we interpret \ref{i:dyadic-decomp} as measuring the rate of convergence of the length of inscribed polygonal approximations to $\Gamma$ to the length of $\Gamma$ itself. The term $\ell(\gamma) - \crd(\gamma)$ a subarc $\gamma$ can be expected to appear because it measures exactly this form of difference in length. 

One of the corollaries of Theorem \ref{t:bistst} that Bishop uses to prove Theorem \ref{t:bis-tst-char} translates directly to our setting:
\begin{cor}[\cite{Bi22} Corollary 5.2 in $\R^n$]
If $\Gamma\subseteq\ell_2$ is a closed Jordan curve and $S\vcentcolon=\sum_{Q\in\scH}\beta_\Gamma(Q)^2 < \infty$, then $\Gamma$ is chord-arc, i.e.. any pair of points $z,w\in\Gamma$ are connected by a subarc $\gamma$ with $\ell(\gamma)\lesssim |z-w|$.
\end{cor} One can check that Bishop's proof of the $\R^n$-version is independent of the dimension $n$ so that this result follows if one replaces usages of Theorem \ref{t:bistst} there with Theorem \ref{t:thmB}. For more on how the traveling salesman theorem applies to Weil-Petersson curves and related subjects, the reader should see Section 4 of \cite{Bi20}. The rest of the paper gives connections between these curves and a plethora of objects such as conformal maps, Schwarzian derivatives, quasiconformal mappings, Sobolev spaces, and minimal surfaces in hyperbolic 3-space.
\end{subsubsection}

\begin{subsubsection}{Traveling salesman in Banach spaces}
A separate, related branch of research is that of traveling salesman problems in more general metric spaces. Recent success has been achieved by Matthew Badger and Sean McCurdy \cite{BM22a}, \cite{BM22b} in attaining traveling salesman-type necessary and sufficient conditions in Banach spaces. Much of their work was inspired by the paper of Edelen, Naber, and Valtorta \cite{ENV19} which implemented the Reifenberg algorithm in Banach spaces.

Roughly speaking, \cite{ENV19} gave a Banach space version of Reifenberg's topological disk theorem \cite{Re60}, which states that any subset $\Sigma\subseteq\R^n$ which is sufficiently bilaterally close to an affine $k$-dimensional plane at all locations in $\Sigma$ and all sufficiently small scales is locally homeomorphic to an open subset of $\R^k$, hence is locally topologically a $k$-dimensional disk. Edelen, Naber, and Valtorta extended this result to infinite-dimensional Banach spaces, and gave a traveling salesman-type application in the form of a structure theorem for measures in Banach spaces (\cite{ENV19} Theorem 2.1). They give a sufficient condition on a Borel measure $\mu$ to be well concentrated around a $k$-dimensional set in terms of the pointwise boundedness of a sum of integral beta numbers $\beta_\mu^k$ which measure how close $\mu$ is locally to a $k$-dimensional affine plane. An important aspect of their result which is particularly relevant to Badger and McCurdy's work is that the exponent on $\beta_\mu^k$ appearing in Edelen, Naber, and Valtorta's sum differs based on the geometric structure of the Banach space. The exponent $2$ appears in the Hilbert space case, but one must make other assumptions on the geometry in more general Banach spaces to say something stronger.

Indeed, Badger and McCurdy use the well-studied notions of \textit{modulus of smoothness} and \textit{modulus of convexity} to estimate the triangle inequality excess (recall Remark \ref{rem:pythagorean-theorem}) from above and below respectively. They apply their results to prove necessary and sufficient conditions in $\ell_p$ spaces for $1 < p < \infty$. A major difference between $\ell_p,\ p\not=2$ and $\ell_2$ is that the sharp necessary and sufficient conditions they prove in $\ell_p$ using the standard Jones beta number diverge from one another. One reason this result might be expected is that the triangle inequality excess for orthogonally (in the $\ell_2$ sense) perturbed vectors differs based on the direction of the perturbed vector.

To illustrate this point, if $e_1,e_2$ are standard unit basis vectors for $\ell_p$ and $0 < \delta \ll 1$, then 
\begin{equation*}
    |e_1 + \delta e_2|_p - |e_1|_p = \left( 1 + \delta^p \right)^{1/p} - 1 \simeq_p \delta^p.
\end{equation*}
On the other hand, suppose we take a ``diagonal'' vector $v = \frac{1}{2^{1/p}}(e_1 + e_2)$ and perturb it by the orthogonal (in the $\ell_2$ sense) vector $w = \frac{1}{2^{1/p}}(e_1-e_2)$. We have
\begin{equation*}
    |v+\delta w|_p - |v|_p = \frac{1}{2^{1/p}}\left((1+\delta)^p + (1-\delta)^p  \right)^{1/p} - 1 \simeq_p \delta^2.
\end{equation*}
The length gain by small orthogonal perturbation in $\ell_p$ varies depending on the direction of the perturbed vector in contrast to the $\ell_2$ case.

\begin{thm}[\cite{BM22a} Theorem 1.6](sharp sufficient conditions in $\ell_p$)
Let $1 < p < \infty$. If $E\subseteq \ell_p$ and $S_{E,\min(p,2)}(\scG) < \infty$ for some multiresolution family $\scG$ for $E$ with inflation factor $A_\scG \geq 240$, then $E$ is contained in a curve $\Gamma$ in $\ell_p$ with
\begin{equation*}
    \sH^1(\Gamma) \lesssim_{p,A_{\scG}} S_{E,\min(p,2)}(\scG).
\end{equation*}
The exponent $\min(p,2)$ on beta numbers is sharp.
\end{thm}
\begin{thm}[\cite{BM22a} Theorem 1.7](sharp necessary conditions in $\ell_p$)
Let $1 < p < \infty$. If $\Sigma\subseteq \ell_p$ is a connected set and $\scH$ is a multiresolution family for $\Sigma$ with inflation factor $A_\scH > 1$, then
\begin{equation*}
    S_{\Sigma,\max(2,p)}(\scH) \lesssim_{p,A_{\scH}} \sH^1(\Sigma).
\end{equation*}
The exponent $\max(2,p)$ on beta numbers is sharp.
\end{thm}
Badger and McCurdy's results give a similar proof of Theorem \ref{t:hstst} by taking the case $p=2$ in their above results.
\begin{remark}[Banach space Jordan arcs]
Given the results of this paper, it is natural to ask whether there is any analogue of Theorems \ref{t:thmA} and \ref{t:thmB} in $\ell_p$. That is, for a Jordan arc $\Gamma\subseteq\ell_p$ and multiresolution family $\scH$ for $\Gamma$, could one show that
\begin{equation*}
     S_{\Gamma,\max(2,p)}(\scH) \lesssim_{p,A_{\scH}} \ell(\Gamma) - \crd(\Gamma)
\end{equation*}
or
\begin{equation*}
    S_{\Gamma,\min(p,2)}(\scH) \gtrsim_{p,A_{\scH}} \ell(\Gamma) - \crd(\Gamma)
\end{equation*}
by combining the methods of \cite{BM22a}, \cite{BM22b} and those given here? If these inequalities do not hold, can one find a different geometric function of the endpoints of $\Gamma$ which could replace $\crd(\Gamma)$? What about for general rectifiable arcs?
\end{remark}
\end{subsubsection}

\begin{subsubsection}{Traveling salesman in general metric spaces}
Some success has also been achieved in the setting of general metric spaces by Hahlomaa \cite{Ha05} and David and Schul \cite{DS21}. Because there is no ambient linear structure in a general metric space which one can use to define the standard Jones beta number, the work in metric spaces uses replacements which directly measure the triangle inequality excess. Hahlomaa originally defined a general \textit{metric beta number} using the notion of Menger curvature, but this definition is equivalent to the following given by David and Schul. Let $E$ be a metric space, $p\in E$ and $r > 0$. Let $Q=B(p,r)$ and define the \textit{metric beta number} by
\begin{align*}
    \beta_\infty^E(Q)^2 \vcentcolon= r^{-1}\sup\{&\dist(x,y) + \dist(y,z) - \dist(z,x):\\ &x,y,z\in E\cap B(p,r)
    \text{ and } \dist(z,y) \leq \dist(y,z) \leq \dist(z,x)\}.
\end{align*}
If $E$ is $\ell_2$, then this is proportional to the normalized length difference between the line segment $[x,z]$ and its perturbed version given by $[x,y]\cup[y,z]$. The exponent $2$ is added in the definition as a convention to preserve the form of Theorem \ref{t:Rn-tst}. Hahlomaa was the first to give a sufficient condition in general metric spaces:
\begin{thm}[\cite{Ha05} Theorem 5.3]
Let $E$ be a metric space and let $\scG$ be a multiresolution family for $E$ with inflation factor $A\simeq 1$. If
\begin{equation*}
    \beta_\infty^E(\scG)\vcentcolon= \diam(E) + \sum_{Q\in\scG}\beta_\infty^E(Q)^2\diam(Q) < \infty,
\end{equation*}
then there exists a set $F\subseteq[0,1]$ and a surjective Lipschitz map $f:F\rightarrow E$ with Lipschitz constant $\mathrm{\mathop{Lip}}(f)\lesssim \beta_\infty^E(\scG)$.
\end{thm}
See \cite{Sc07b} Example 3.3.1 for a counterexample to the converse to Hahlomaa's result in $\R^2$ with the $\ell_1$ metric. Schul notes however that this counterexample is not fully satisfactory, as Hahlomaa's result can be strengthened, for instance, by defining the metric beta number to be a supremum taken over more restrictive triples. In any case, David and Schul have recently achieved a partial converse to this result. Their result concerns \textit{doubling} metric spaces. We say that a metric space is doubling if there exists a constant $N$ such that every ball of radius $r > 0$ can be covered by at most $N$ balls of radius $\frac{r}{2}$.
\begin{thm}[\cite{DS21} Theorem A]
Let $\Sigma$ be a connected doubling metric space with doubling constant $N$ and let $\scH$ be a multiresolution family for $\Sigma$ with inflation factor $A > 1$. For every $\epsilon > 0$,
\begin{equation*}
    \diam(Q) + \sum_{Q\in\scH}\beta_\infty^\Sigma(Q)^{2+\epsilon}\diam(Q) \lesssim_{\epsilon,A,N}\sH^1(\Sigma).
\end{equation*}
\end{thm}
The authors conjecture that the doubling hypothesis can be dropped by utilizing the techniques of \cite{Sc07}.
\begin{remark}[Metric space Jordan arcs]
It would again be interesting to know whether these results could be strengthened in the special case of a Jordan arc. That is, let $\Gamma$ be a metric space which is the image of a continuous injective map $\gamma:[0,1]\rightarrow\Gamma$. Suppose $\scG$ is a multiresolution family for $\Gamma$. Do the na\"{i}ve inequalities
\begin{equation*}
    \sum_{Q\in\scG}\beta_\infty^\Gamma(Q)^2\diam(Q) \gtrsim_A \ell(\Gamma) - \crd(\Gamma)
\end{equation*}
or, for $\Gamma$ with doubling constant $N$,
\begin{equation*}
    \sum_{Q\in\scG}\beta_\infty^\Gamma(Q)^{2+\epsilon}\diam(Q) \lesssim_{\epsilon,A,N}\ell(\Gamma) - \crd(\Gamma)
\end{equation*}
hold? As our methods rely heavily on linear structure, these seem further from proof than the proposed extension to Banach space. But even if these do not hold, can one find a different geometric function of the endpoints of $\Gamma$ to replace $\crd(\Gamma)$ in the equations above? What about for general rectifiable arcs?
\end{remark}
\end{subsubsection}
\end{subsection}
\begin{subsection}{Acknowledgments}
The author would first like to thank Raanan Schul for many helpful discussions. This result would not have been possible without both Raanan's edits of several early drafts and the author's numerous conversations with Raanan about \protect\cite{Sc07} and the new ideas introduced here. 

This result would also not exist without the wonderful course ``Topics in Real Analysis: The Weil-Petersson class, traveling salesman theorems and minimal surfaces in hyperbolic space" given by Chris Bishop in Fall 2020 at Stony Brook University in which the results from \protect\cite{Bi20} and \protect\cite{Bi22} were presented, and the problem solved here was first presented to the author.

The author would finally like to thank the anonymous referee for a multitude of insightful comments which greatly improved the content and style of the paper. The author also thanks the referee for also pointing out the problems with several aspects of Schul's original proof of \cite{Sc07} Lemma 3.28 and encouraging the author to write out the details of Schuls' arguments in full.
\end{subsection}

\begin{subsection}{Preliminaries}
\begin{subsubsection}{Parameterizations of finite-length continua and arcs associated to a parameterization in Hilbert space}
From this point on, fix a connected, compact set $\Sigma\subseteq\ell_2$ and a rectifiable Jordan arc $\Gamma\subseteq\ell_2$. We are guaranteed that $\Gamma$ has an injective arc length parameterization $\gamma:[0,\ell(\Gamma)]\rightarrow\Gamma$ by definition. It is a vital fact that we also have access to an arc length parameterization of $\Sigma$. We deduce the existence of this map as a consequence of the more general results on parameterization of finite-length continua in metric spaces carried out by Alberti and Ottolini \cite{AO17}.

Let $X$ be a metric space and $I = [a,b]\subseteq \R$ be a closed interval. Following Alberti and Ottolini, for a continuous map $\gamma:I\rightarrow X$ (often referred to as a \textit{path}) and a point $x\in X$, define the \textit{multiplicity} of $\gamma$ at $x$ as
\begin{equation*}
    m(\gamma,x)\vcentcolon=\#(\gamma^{-1}(x))
\end{equation*}
where for any set $A$, $\#A$ denotes the cardinality of $A$. We define the \textit{length} of $\gamma$ as
\begin{equation}\label{e:param-length}
    \ell(\gamma) =  \ell(\gamma,I)\vcentcolon= \sup\left\{ \sum_{i=1}^n d(\gamma(t_{i-1}),\gamma(t_i)) : n\geq0,\ t_0 < t_1 < \ldots < t_n,\ t_j\in I \text{ for all $j$}\right\}.
\end{equation}
Additionally, $\gamma$ has \textit{constant speed} if there exists a finite constant $c$ such that
\begin{equation*}
    \ell(\gamma,[t_0,t_1]) = c(t_1-t_0) \text{ for every $[t_0,t_1]\subseteq I$}.
\end{equation*}
We will refer to $\gamma$ as an \textit{arc length} parameterization if $\gamma$ has constant speed with $c = 1$. We will only consider constant speed parameterizations, and given a fixed parameterization $\gamma:I\rightarrow\Sigma$ with constant speed $c$, we define a finite Borel measure $\ell$ supported on $\Sigma$ by
\begin{equation*}
    d\ell\vcentcolon= c\gamma_*(dt)
\end{equation*}
where $\gamma_*$ denotes the pushforward measure so that $\ell(A) = c\int_{\gamma^{-1}(A)}dt$. Alberti and Ottolini prove the following general parameterization result:
\begin{thm}[\cite{AO17} Theorem 4.4]\label{t:AO-param}
Let $X$ be a connected, compact metric space with $\sH^1(X) < \infty$. Then there exists a path $\gamma:[0,1]\rightarrow X$ with the following properties:
\begin{enumerate}[label=(\roman*)]
    \item $\gamma$ is closed, Lipschitz, surjective, and has degree zero;
    \item $m(\gamma,x) = 2$ for $\sH^1$-a.e. $x\in X$, and $\ell(\gamma) = 2\sH^1(X)$; and,
    \item $\gamma$ has constant speed, equal to $2\sH^1(X)$.
\end{enumerate}
\end{thm}
See \cite{AO17} Section 4.1 for the definition of degree zero. (Essentially, the path passes through almost every point thee same number of times in one direction as in the opposite direction.) Fix a multiresolution family $\scH$ for $\Sigma$, and let $\gamma$ be a constant speed parameterization of $\Sigma$, the existence of which is guaranteed by Theorem \ref{t:AO-param}. We will use $\gamma$ to properly study the geometry of $\Sigma$ inside of the balls of $\scH$.

\begin{defn}[Arcs]
We define an \textit{arc} $\tau\vcentcolon=\gamma|_{[a,b]}$ to be the restriction of $\gamma$ to a subinterval $[a,b]\subseteq I$. Given a ball $Q\subseteq\ell_2$, we define the family of arcs of $\Sigma$ inside $Q$ as
\begin{equation*}
    \Lambda(Q) \vcentcolon= \{\gamma|_{[a,b]}:[a,b]\subseteq[0,1],\ [a,b]\ \text{is a connected component of } \gamma^{-1}(2Q\cap\Sigma)\}.
\end{equation*}
These are arcs inside $2Q$ which intersect $Q$ in the style of \cite{BM22a}. Fix an arc $\tau$ as above. Further following \cite{BM22a}, we use bold terms to refer to operators acting on arcs and define
\begin{equation*}
    \Domain(\tau) \vcentcolon= [a,b],\ 
    \Image(\tau) \vcentcolon= \tau(\Domain(\tau)),\ 
    \Diam(\tau) \vcentcolon= \diam(\Image(\tau)),
\end{equation*}
\begin{equation*}
    \Edge(\tau) \vcentcolon= [\tau(a),\tau(b)],\ 
    \Line(\tau) \vcentcolon= \{\tau(a) + t(\tau(b) - \tau(a)) : t\in\R\},\ 
    \Crd(\tau) \vcentcolon= |\tau(a) - \tau(b)|,
\end{equation*}
\begin{equation*}
    \Start(\tau) \vcentcolon= \tau(a),\ \End(\tau)\vcentcolon= \tau(b).
\end{equation*}
where $[\tau(a),\tau(b)]\subseteq\ell_2$ is the line segment connecting the endpoints of $\tau$, and hence $\Line(\tau)$ is the line passing through the endpoints of $\tau$. We will often use the term \textit{arc} to refer to both $\tau$ and $\Image(\tau)$, but the referent should be clear from context. If $\xi = \gamma|_{[c,d]}$ for $[c,d]\subseteq[a,b]$, then we will often call $\xi$ a \textit{subarc} of $\tau$. For two general arcs $\xi$ and $\tau$ we define shorthand notation by defining (in the sense of logical formulas)
\begin{align*}
    (\xi \subseteq \tau) \vcentcolon= (\Domain(\xi)\subseteq\Domain(\tau)) \text{ and } (x\in\tau) \vcentcolon= (x\in \Image(\tau)),
\end{align*}
and we define
\begin{align*}
        \xi\cap\tau \vcentcolon= \gamma|_{\Domain(\tau)\cap\Domain(\xi)} \text{ and } \xi\cup\tau \vcentcolon= \gamma|_{\Domain(\tau)\cup\Domain(\xi)}.
\end{align*}
If $E\subseteq \ell_2$ and $\mu$ is a Borel measure on $\ell_2$ then we set
\begin{align*}
    \tau\cap E \vcentcolon= \Image(\tau)\cap E \text{ and } \mu(\tau) \vcentcolon= \mu(\Image(\tau)). 
\end{align*}
\end{defn}

\begin{defn}[Almost flat and non-flat arcs]
In order to measure the flatness of an arc, we define the \textit{arc beta number}
\begin{equation*}
    \tilde{\beta}(\tau) \vcentcolon= \sup_{x\in\Image(\tau)}\frac{\text{dist}(x,\Edge(\tau))}{\Diam(\tau)}.
\end{equation*}
We also set
\begin{align*}
    \Max(\tau) &\vcentcolon= \{y\in\Image(\tau) : \tb{\tau}\Diam(\tau) = \dist(y,\Edge(\tau))\}\not=\varnothing,\\
    \Drift(\tau) &\vcentcolon= \tb{\tau}\Diam(\tau) = \dist(y,\Edge(\tau)) \text{ for any $y\in\Max(\tau)$}.
\end{align*}
Fix a constant $\epsilon_2 > 0$ whose specific value will be set in Section \ref{sec:constants} Given a ball $Q\in\scH$, we define the set of \textit{almost flat arcs} for $Q$ as
\begin{equation*}
    S(Q) \vcentcolon= \{\tau\in\Lambda(Q):\tilde{\beta}(\tau)\leq\epsilon_2\beta_\Sigma(Q)\}
\end{equation*}
and refer to any arc $\tau\in S(Q)$ as an \textit{almost flat arc}. We will commonly refer to $\Lambda(Q)\setminus S(Q)$ as the set of \textit{non-flat arcs}. For any collection of arcs $\mathscr{T}$, we define
\begin{equation*}
    \beta_{\mathscr{T}}(Q) \vcentcolon= \beta_{\cup_{\tau\in\mathscr{T}}\Image(\tau)}(Q),
\end{equation*}
and for a single arc $\tau$ we set $\beta_\tau(Q)\vcentcolon=\beta_{\Image(\tau)}(Q)$.
\end{defn}
Considering different configurations of almost flat and non-flat arcs will give us useful ways of classifying balls $Q\in \scH$. For $\eta\in S(Q)$, we get that $\Image(\eta)$ lies \emph{very} close to $\Edge(\eta)$ on the scale of $\Diam(\eta) \simeq \diam(Q)$, so that in many cases one can think of $\eta$ as a line segment. The parameter $\epsilon_2$ will be fixed small enough such that this approximation will work well on all small scales relative to $\diam(Q)$ which are relevant to our almost flat analysis in Section \ref{sec:almost-flat-arcs}. 
\end{subsubsection}
\begin{subsubsection}{The division of $\scH$}

\begin{figure}[h]
    \centering
    \includegraphics[scale=0.5]{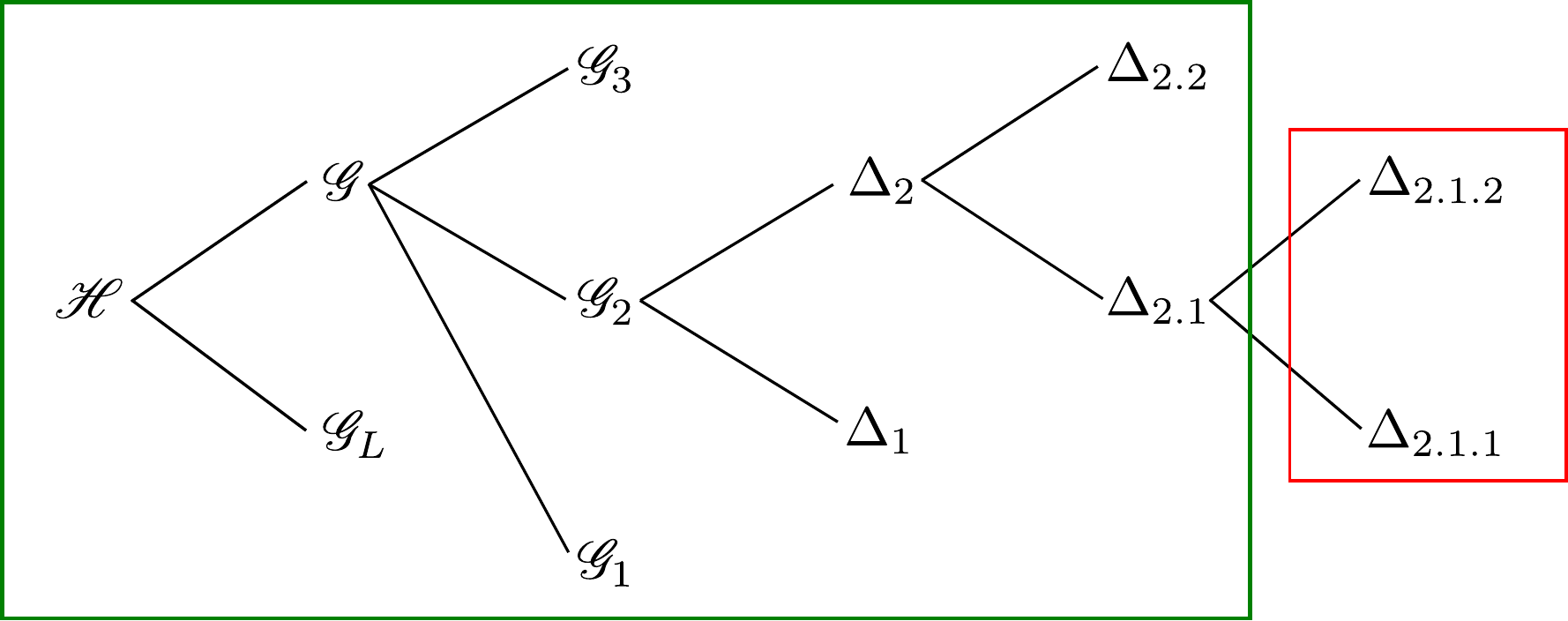}
    \caption{A tree denoting the subfamilies of the multiresolution family $\scH$. Those in the green rectangle were considered by Schul in his proof of Theorem \ref{t:hstst}. The in the red rectangle are new introductions made in the proof of Theorem \ref{t:thmA}}.
    \label{fig:CollectionDiagram}
\end{figure}
We begin classifying balls of $\scH$ based on their geometry by splitting off the large balls and balls with $\beta_\Sigma(Q) = 0$: Define
\begin{align*}
    \scG_L &\vcentcolon= \{Q\in \scH : \Gamma \cap (H\setminus12Q)=\varnothing \text{ or } \beta_\Sigma(Q) = 0\},\\
    \scG &\vcentcolon= \scH\setminus\scG_L.
\end{align*}
Next, we extend the family $\scG$ by considering $\scG^\lambda = \{\lambda Q\ :\ Q\in\scG\}$ for $\lambda\in\{1,2,8,12\}$ together. For any ball $Q\in \scG^1\cup\scG^2\cup\scG^8\cup\scG^{12}$, choose a subarc $\gamma_Q\ni x_Q$ such that we always have $\gamma_Q\subseteq\gamma_{2Q}\subseteq\gamma_{8Q}\subseteq\gamma_{12Q}$. Let $\epsilon_1 > 0$ be small (to be fixed in Section \ref{sec:constants}) and partition $\scG^\lambda = \scG_1^\lambda \cup \scG_2^\lambda \cup \scG_3^\lambda$ where
\begin{align*}
    \scG_1^\lambda &= \{Q\in\scG: \tilde{\beta}(\gamma_{\lambda Q}) > \epsilon_2\beta(\lambda Q)\},\\
    \scG_2^\lambda &= \{Q\in\scG: \tilde{\beta}(\gamma_{\lambda Q}) \leq \epsilon_2\beta_\Sigma(\lambda Q) ; \beta_{S_{\lambda Q}}(\lambda Q) > \epsilon_1\beta_\Sigma(Q)\},\\
    \scG_3^\lambda &= \{Q\in\scG: \tilde{\beta}(\gamma_{\lambda Q}) \leq \epsilon_2\beta_\Sigma(\lambda Q) ; \beta_{S_{\lambda Q}}(\lambda Q) \leq \epsilon_1\beta_\Sigma(Q)\}.
\end{align*}
\begin{figure}[h]
    \centering
    \includegraphics[scale=0.7]{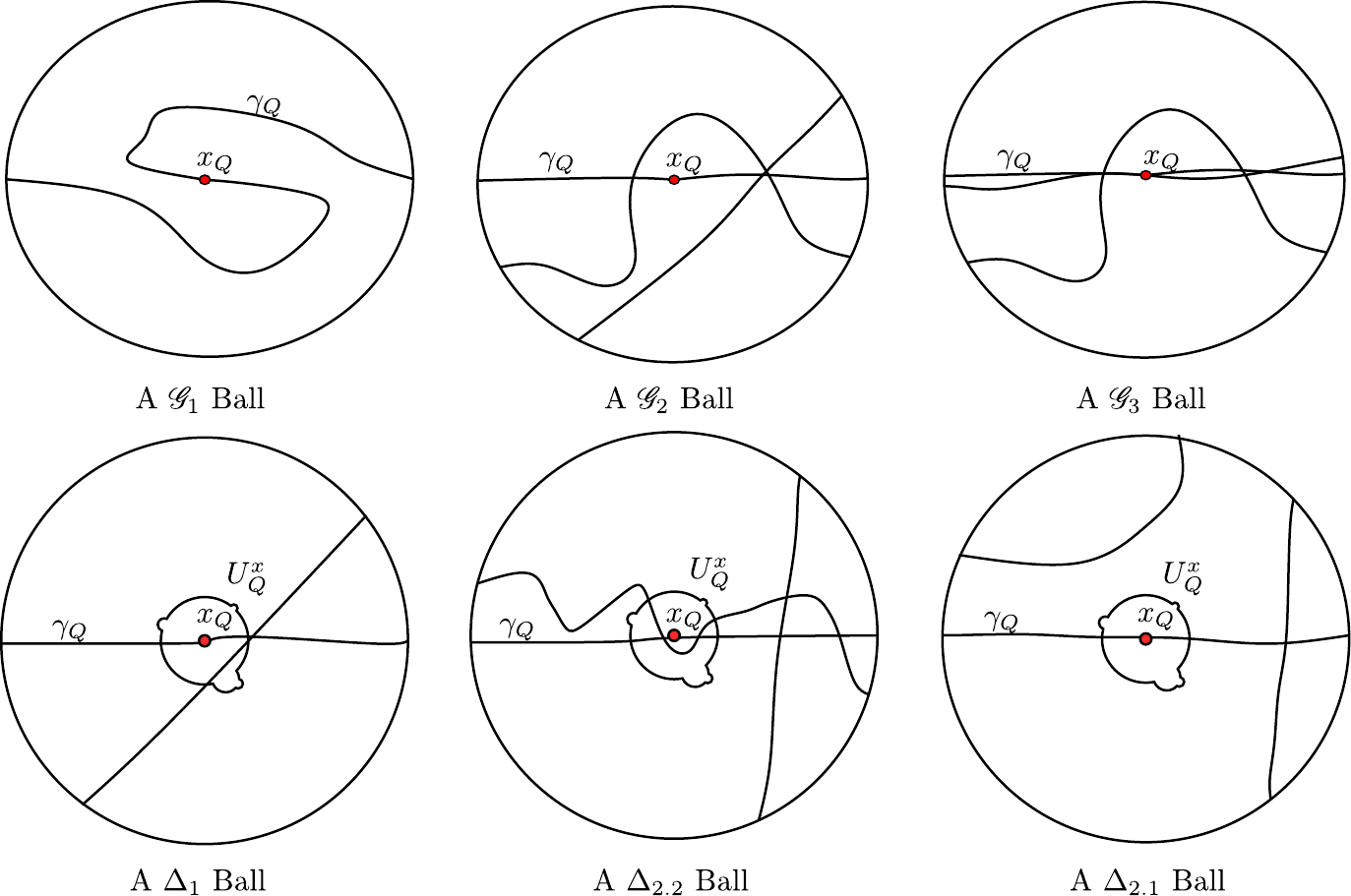}
    \caption{Examples of balls in $\scG_1,\scG_2,\scG_3,\Delta_1,\Delta_{2.1}$ and $\Delta_{2.2}$.}
    \label{fig:BallExamples}
\end{figure}
See Figure \ref{fig:CollectionDiagram} for examples. This decomposition is slightly different than Schul's in the style of $\cite{BM22a}$. Notice that for any $Q\in\scG^1\cup\scG^2\cup\scG^{8}\cup\scG^{12}$, either
\begin{enumerate}[label=(\roman*)]
    \item $Q\in\bigcup_{\lambda}\scG_1^\lambda\cup\scG_2^\lambda,$ \text{ or }
    \item $Q\in\bigcap_{\lambda}\scG_3^\lambda$.
\end{enumerate}
Hence, it suffices to consider the collections $\scG_1 = \cup_\lambda \scG_1^\lambda,\ \scG_2 = \cup_\lambda\scG_2^\lambda$, and $\scG_3 = \cap_\lambda \scG_3^\lambda$, and our total family of collections of balls is now
\[\{\scG_L, \scG_1, \scG_2, \scG_3\}.\]
The collections $\scG_L$, $\scG_1$, and $\scG_3$ will be handled as they are, but $\scG_2$ needs further refinement. To describe the refinement of $\scG_2$, we will define a ball-like set associated to each ball $Q\in\scG$ called its ``core''.

\begin{defn}[Cores \cite{Sc07}]
Let $Q\in\scG$ with $Q = B(x_Q,A2^{-n})$. For any $c\in\R,\ 0 < c < \frac{1}{4A}$ and $J\in\Z,\ J \geq 10$, define $U_Q^{c,J,0} \vcentcolon= cQ$. Let $i \geq 0$ and set
\begin{equation*}
    U_Q^{c,J,i+1}\vcentcolon= U_Q^{c,J,i} \cup \bigcup_{\substack{x_{Q'}\in X_{n+J(i+1)} \\ cQ'\cap U_Q^{c,J,i}\not=\varnothing}} cQ'.
\end{equation*}
We then define the \textit{core} of $Q$ with dilation factor $c$ and scaling factor $J$ to be
\begin{equation*}
    U^{c,J}_Q \vcentcolon= \bigcup_{i\geq 0} U_Q^{c,J,i}.
\end{equation*}
For $J$ as fixed in Section \ref{sec:constants} and $c_0\vcentcolon=\frac{1}{64A}$, we define three successively larger cores for $Q$ as
\begin{align*}
    U_Q \vcentcolon= U_Q^{c_0,J}, \quad U_Q^x \vcentcolon= U_{Q}^{8c_0,J}, \quad U_Q^{xx} \vcentcolon= U_Q^{16c_0,J}.
\end{align*}
These are the concrete families we will work with.
\end{defn}
The cores have nice separation and inclusion properties which will allow us to work with some families of balls more easily. These are given in the following proposition:
\begin{prop}[properties of core families, cf. \cite{Sc07} Lemma 3.19]\label{p:cores}
Let $J \geq 10$ and $c < \frac{1}{4A}$. Fix $1 \leq j \leq J$ and define
\begin{equation*}
    \scQ_j \vcentcolon= \{Q\in\scG : Q = B(x_Q,A2^{-n}),\ x\in X_n,\ n\equiv j \mod J\}.
\end{equation*}
Let $Q,Q'\in \scQ_j$, with $Q = B(x_Q,A2^{-n}),\ Q' = B(x_{Q'},A2^{-m})$ and corresponding cores $U^c_Q\vcentcolon= U_Q^{c,J},\ U^c_{Q'}\vcentcolon= U_{Q'}^{c,J}$. Then
\begin{enumerate}[label=(\roman*)]
    \item $cQ \subseteq U^c_Q \subseteq (1+2^{-J+2})cQ$,\label{i:core-diam}
    \item If $n=m$, then either $Q = Q'$ or $\dist(U^c_{Q},U^c_{Q'}) \geq 2^{-n-1}$, \text{and}\label{i:core-sep}
    \item If $n > m$ and $U^c_Q\cap U^c_{Q'}\not=\varnothing$, then $U^c_{Q'}\subsetneq U^c_Q$.\label{i:core-nest}
\end{enumerate}
\begin{proof}
For a proof of \ref{i:core-diam}, one can apply \cite{Sc07b} Lemma 2.16 to show that any point $y\in U_Q$ satisfies
\begin{equation*}
    \dist(y,U_Q) \leq c\rad(Q)\sum_{k=0}^\infty(2\cdot2^{-J})^k \leq (1+2^{-J+2})c\rad(Q).
\end{equation*}
For \ref{i:core-sep}, notice that because $x_Q,x_{Q'}\in X_n$, we know $|x_Q - x_{Q'}| \geq 2^{-n}$. This combined with the second inclusion in \ref{i:core-diam} completes the proof. Property \ref{i:core-nest} follows from the definition.
\end{proof}
\end{prop}
\begin{remark}
Property \ref{i:core-diam} above implies the following containments:
\begin{align*}
    c_0Q \subseteq\ &U_Q \subseteq (1 + \epsilon_1)c_0Q \subseteq 2c_0Q,\\
    8c_0Q \subseteq\ &U_Q^x \subseteq (1 + \epsilon_1)8c_0Q \subseteq 9c_0Q,\\
    16c_0Q \subseteq\ &U_Q^{xx} \subseteq (1 + \epsilon_1)16c_0Q \subseteq 17c_0Q,
\end{align*}
where the penultimate containment in each line follows from the fact that $2^{-J+2} < \epsilon_1$ (see Section \ref{sec:constants}). It is usually best to think of cores as small perturbations of balls.
\end{remark}
Using the cores, we can now refine the family $\scG_2$. Let $C_U > 0$ and define
\begin{align*}
    \Delta_1 &\vcentcolon= \{Q\in\scG_2 : C_U\beta_{S(Q)}(U_Q^x) > \beta_{S(Q)}(Q)\},\\
    \Delta_{2} &\vcentcolon= \scG_2 \setminus \Delta_1.
\end{align*}
The constant $C_U$ will be fixed in Section \ref{sec:constants}. We further divide $\Delta_2$ by defining
\begin{align*}
    \Delta_{2.2} &\vcentcolon= \{Q\in\Delta_2 : \exists\tau\in\Lambda(Q)\setminus S(Q),\ \tau\cap U_Q^x\not= \varnothing\},\\
    \Delta_{2.1} &\vcentcolon= \Delta_2 \setminus \Delta_{2.2}.
\end{align*}
We note here that it suffices to assume $\scG_2\subseteq\scG_2^1$:
\begin{remark}[Reduction of $\scG_2$ to $\scG^1_2$]
Suppose that we have proven the inequality
\begin{align}\label{e:G2-reduction}
    \sum_{Q\in\scG_1}\beta_\Sigma(Q)^2\diam(Q) + \sum_{Q\in\scG_3}\beta_\Sigma(Q)^2\diam(Q) + \sum_{Q\in\scG^1_2}\beta_\Sigma(Q)^2\diam(Q) \lesssim_A \sH^1(\Sigma)
\end{align}
for any multiresolution family with arbitrary inflation factor $A > 200$. Recall that $Q\in\scG_2$ implies $Q\in\scG_2^\lambda$ for some $\lambda\in\{1,2,8,12\}$. We would like to show that $\sum_{Q\in\scG^\lambda_2}\beta_\Sigma(Q)^2\diam(Q) \lesssim_A \sH^1(\Sigma)$. Let $\lambda\scH$ be the multiresolution family with the same net points as $\scH$ but inflation factor $\lambda A$. Define $\hat{Q} \vcentcolon= \lambda Q$. Then $\hat{Q}\in\lambda\scH$ and $\tb{\gamma_{\hat{Q}}} \leq \epsilon_2\beta_\Sigma(\hat{Q})$. If $\beta_{S(\hat{Q})}(\hat{Q}) > \epsilon_1\beta_\Sigma(\hat{Q})$, then $\hat{Q}\in \scG_2^1(\lambda \scH)$, the subfamily $\scG^1_2$ defined relative to the multiresolution family $\lambda \scH$. Otherwise, $\beta_{S(\hat{Q})}(\hat{Q}) \leq \epsilon_2\beta_\Sigma(\hat{Q})$, implying $\hat{Q}\in\scG^1_3(\lambda \scH)\subseteq \scG_1(\lambda\scH)\cup\scG_3(\lambda\scH)$. Therefore, the desired inequality follows from \eqref{e:G2-reduction} applied to the multiresolution family $\lambda\scH$. A similar argument shows that the same reduction holds for $\Gamma$ where the right side of \eqref{e:G2-reduction} is replaced by $\ell(\Gamma) - \crd(\Gamma)$.
\end{remark}
With this remark, we are justified in assuming $\scG_2\subseteq\scG_2^1$ and need not worry about factors of $\lambda$ in our analysis of $\scG_2$ in Section \ref{sec:almost-flat-arcs}. See examples of balls in these families in Figure \ref{fig:BallExamples}. These are all of the subcollections necessary to prove \eqref{e:HS-nec}, the Hilbert space necessary condition. When we restrict to the case of a Jordan arc, we will need to further divide the family $\Delta_{2.1}$. This is carried out in Section \ref{subsec:almost-flat-arcs-gamma} (also see Figure \ref{fig:CollectionDiagram} for a full diagram of the divisions).
\end{subsubsection}
\end{subsection}

\begin{subsection}{Constants}\label{sec:constants}
In this section, we fix the values of constants used in the proof of Theorem \ref{t:thmA} and give general descriptions of their purposes and where the values come from. We fix
\begin{align*}
    \epsilon_1 &\vcentcolon= 10^{-10},\\
    C_U &\vcentcolon= 100A\epsilon_1^{-1},\\
    J &\vcentcolon= -\log_2(10^{-3}\epsilon_1c_0),\\
    \epsilon_3 &\vcentcolon= (100A)^{-1}\epsilon_1^2,\\
    \epsilon_2 &\vcentcolon= \min((10^5AC_U)^{-1}\epsilon_1^2, 100^{-1}c_0\epsilon_3^2).
\end{align*}
We first fix $\epsilon_1$, a catch-all, small reference parameter. Next, we fix $C_U$, the constant used in the definition of $\Delta_1$. It is fixed small enough here to facilitate \eqref{e:delta1-betatilde} in the proof of Lemma \ref{l:delta1-cores}, ensuring that $\beta_{\gamma_Q}(U_Q^x) \lesssim_{\epsilon_1} \beta_{S(Q)}(U_Q^x)$. We now fix the ``jump'' parameter $J$. This is fixed large enough so that for any balls $Q,Q'\in\scQ_j$, the ``thinned'' family gotten by skipping $J$ scales in the multiresolution family $\scH$, $\diam(Q') < \diam(Q)$ implies
\begin{equation}
    \diam(2Q') \leq 2^{-J+1}\diam(Q) \leq 100^{-1}\epsilon_1c_0\diam(2Q) < \epsilon_1\diam(U_Q).
\end{equation}
This means any future generation ball $Q'$ is very small even on the scale of $U_Q$ (this is important in Section \ref{subsec:sigma-delta2.1}, for instance). We can also conclude from Proposition \ref{p:cores} that
\begin{equation*}
     c\diam(Q) \geq \frac{1}{1+2^{-J+2}}\diam(U^c_Q) \geq (1-\epsilon_1)\diam(U^c_Q), \text{ and }
\end{equation*}
\begin{equation*}
    \diam(U^c_Q) \leq (1+2^{-J+2})c\diam(Q) \leq  (1+\epsilon_1)c\diam(Q).
\end{equation*}
for $c < \frac{1}{4A}$. We next fix $\epsilon_3$, a constant introduced in Section \ref{subsec:almost-flat-arcs-gamma} to define the families $\Delta_{2.1.1}$ and $\Delta_{2.1.2}$. This constant is fixed small in terms of $\epsilon_1$ to facilitate the final estimate in the proof of Lemma \ref{l:straightcenter}. It is fixed small in terms of $A$ to ensure that $100\epsilon_3\diam(Q) < \frac{c_0}{4}\diam(Q)$ to facilitate the neighborhood inclusion needed in the proof of Lemma \ref{l:projection-packing}. The constant $\epsilon_2$ is fixed last. It is fixed small in terms of all of the previous parameters to ensure that almost flat arcs stay close to their edge segments on all of the needed scales, i.e., relative to $C_U^{-1}\epsilon_1\diam(U_Q)$ needed in estimates for $\Delta_1$ and relative to $\epsilon_3\diam(U_Q)$ needed in estimates for $\Delta_{2.1.2}$. We have not attempted to optimize these.
\end{subsection}
\end{section}
\begin{section}{Large-scale balls: \texorpdfstring{$\scG_L$}{}}\label{sec:large-balls}
The goal of this section is to prove the following proposition:
\begin{prop}[cf. \cite{Sc07} Lemma 3.9, cf. \cite{BM22a} Lemma 3.27]
We have 
\begin{equation}\label{e:gen-big-balls}
    \sum_{Q\in\scG_L}\beta_\Sigma(Q)^2\diam(Q) \lesssim_A \ell(\Sigma) \text{ and } \sum_{Q\in\scG_L}\beta_\Gamma(Q)^2\diam(Q) \lesssim_A \ell(\Gamma) - \crd(\Gamma).
\end{equation}
\end{prop}
\begin{proof}
We first prove the $\Sigma$ inequality in \eqref{e:gen-big-balls}. Since $\Sigma\subseteq 12Q$ for any $Q\in\scG_L$, we know $\diam(Q) \geq \frac{\diam(\Sigma)}{12}$. For $k \geq 0$, define
\begin{equation*}
    \mathscr{B}_k \vcentcolon= \left\{Q\in\scG_{L} : \frac{\diam(\Sigma)}{12}2^k \leq \diam(Q) < \frac{\diam(\Sigma)}{12}2^{k+1}\right\}
\end{equation*}
and let $N_k = \#\mathscr{B}_k$. The net spacing for $Q\in\scB_k$ must be at least $\frac{\diam(Q)}{2A} \geq \frac{\frac{\diam(\Sigma)}{12}2^k}{2A} \geq \frac{\diam(\Sigma)}{24A}$. Since $N_k$ is maximal when $\Sigma$ is a line segment with net points separated by distance greater than $\frac{\diam(\Sigma)}{24A}$ along length $\sH^1(\Sigma)$, we get
\begin{equation}\label{e:ballnum}
    N_k \leq 1 + \frac{24A\sH^1(\Sigma)}{\diam(\Sigma)} \leq \frac{48A\sH^1(\Sigma)}{\diam(\Sigma)}.
\end{equation}
Now, to estimate beta numbers, observe that for any ball $Q\in\scB_k$ we have the trivial bound $\beta_\Sigma(Q) \leq \frac{\diam(\Sigma)}{\diam(Q)} \leq 12\cdot2^{-k}$ so that 
\begin{equation*}
    \beta_\Sigma(Q)^2\diam(Q) \leq 144\cdot2^{-2k}\frac{\diam(\Sigma)}{12}2^{k+1} \leq 12\diam(\Sigma)2^{-k+1}
\end{equation*}
We now put this all together:
\begin{align*}
       \sum_{Q\in\scG_L}\beta_\Sigma(Q)^2\diam(Q) &\leq \sum_{k=0}^\infty\sum_{Q\in\scB_k}\beta_\Sigma(Q)^2\diam(Q)
       \leq \sum_{k=0}^\infty N_k\cdot(12\diam(\Sigma)2^{-k+1})\\
       &\lesssim_A \sH^1(\Sigma)\sum_{k=0}^\infty2^{-k} \lesssim \sH^1(\Sigma).  
\end{align*}
This completes the proof of the $\Sigma$ inequality in \eqref{e:gen-big-balls}. In order to prove the $\Gamma$ inequality in \eqref{e:gen-big-balls}, we first note that it suffices to assume
\begin{equation}\label{e:small-excess}
    \ell(\Gamma) - \crd(\Gamma) \leq \epsilon_1\ell(\Gamma).
\end{equation}
Indeed, otherwise \eqref{e:gen-big-balls} would imply
\begin{equation*}
    \sum_{Q\in\scG_L}\beta_\Gamma(Q)^2\diam(Q) \lesssim_A \ell(\Gamma) < \frac{1}{\epsilon_1}(\ell(\Gamma) - \crd(\Gamma))
\end{equation*}
as desired. The only modifications we need to the above proof for this case are improved upper bounds for $N_k$ and $\beta_\Gamma(Q)$. Our assumption \eqref{e:small-excess} implies
\begin{equation*}
    \ell(\Gamma) - \diam(\Gamma) \leq \ell(\Gamma) - \crd(\Gamma) \leq \epsilon_1 \ell(\Gamma) \implies \frac{\ell(\Gamma)}{\diam(\Gamma)} \leq \frac{1}{1-\epsilon_1} \leq 2
\end{equation*}
so that \eqref{e:ballnum} implies
\begin{equation*}
    N_k \leq \frac{48A\ell(\Gamma)}{\diam(\Gamma)} \leq 96A.
\end{equation*}
We now give a new estimate for $\beta_\Gamma(Q)$. Assume without loss of generality that the endpoints $x\text{ and }z$ of $\Gamma$ satisfy $x \vcentcolon= 0$ and $z \vcentcolon= \crd(\Gamma)e_1$ so that the chord segment of $\Gamma$ lies along the $e_1$ axis. Define $\pi:\ell_2\rightarrow\R$ to be the othogonal projection onto the $e_1$-axis and let $\pi^\perp:\ell_2\rightarrow\ell_2$ be the projection onto the orthogonal subspace of the $e_1$-axis. Let $y\in\Gamma$ be a point satisfying
\begin{equation*}
    |\pi^{\perp}(y)| = \sup_{u\in\Gamma}|\pi^{\perp}(u)|.
\end{equation*}
Define $b \vcentcolon= \pi(y)e_1$. The two triples of points $x,b,y$ and $z,b,y$ form right triangles with common altitude length $d\vcentcolon= |\pi^\perp(y)| = |y-b|$. Let $a_1\vcentcolon= |x-b|,\ a_2\vcentcolon=|b-z|$ be the lengths of the bases of these triangles and let $c_1\vcentcolon=|x-y|,\ c_2\vcentcolon=|y-z|$ be the lengths of their hypotenuses (See Figure \ref{fig:Pythagorean-theorem} for a picture). Applying the Pythagorean theorem to each triangle gives
\begin{align*}
    d^2 &= c_i^2 - a_i^2 = (c_i-a_i)(c_i+a_i) \leq 2\diam(\Gamma)(c_i-a_i)
\end{align*}
for $i = 1,2$ . Summing these inequalities over $i$ gives
\begin{align*}
    d^2 &= \diam(\Gamma)(c_1+c_2-a_1-a_2) \leq 2\diam(\Gamma)(\ell(\Gamma) - \crd(\Gamma))
\end{align*}
where we used the fact that $a_1 + a_2 \geq \crd(\Gamma)$ and $c_1 + c_2 \leq \ell(\Gamma)$ because $\Gamma$ is a connected set containing $x,y,$ and $z$. Now, if $Q\in\scB_k$, then $\diam(Q) \geq \frac{\diam(\Gamma)}{12}2^k$ and the definition of $\beta_\Gamma(Q)$ implies $\beta_\Gamma(Q) \leq \frac{d}{\diam(Q)}$ using the $e_1$ axis as an approximating line. This means
\begin{equation}\label{e:crd-beta-bound}
    \beta_\Gamma(Q)^2\diam(Q) \leq \frac{d^2}{\diam(Q)} \leq 2\diam(\Gamma)(\ell(\Gamma) - \crd(\Gamma))\cdot\frac{12\cdot2^{-k}}{\diam(\Gamma)}\leq 24(\ell(\Gamma) - \crd(\Gamma))2^{-k}.
\end{equation}
Therefore, 
\begin{align*}
    \sum_{Q\in\scG_L}\beta_\Gamma(Q)^2\diam(Q) &\leq \sum_{k=0}^\infty\sum_{Q\in\scB_k}\beta_\Gamma(Q)^2\diam(Q)
    \leq \sum_{k=0}^\infty N_k\cdot(24(\ell(\Gamma) - \crd(\Gamma))2^{-k})\\
    &\lesssim_A (\ell(\Gamma) - \crd(\Gamma))\sum_{k=0}^\infty 2^{-k} \lesssim \ell(\Gamma) - \crd(\Gamma).  
\qedhere\end{align*}
\end{proof}
\end{section}
\begin{section}{Non-flat arcs: \texorpdfstring{$\scG_1,\scG_3,\Delta_{2.2}$}{}} \label{sec:non-flat-arcs}
The goal of this section is to prove the following proposition:
\begin{prop}\label{p:non-flat}
Set $\mathscr{N}\vcentcolon= \scG_1 \cup \scG_3 \cup \Delta_{2.2}$. We have
\begin{equation}\label{e:param-non-flat}
    \sum_{Q\in\scN}\beta_{\Sigma}(Q)^2\diam(Q) \lesssim_A \ell(\gamma) - \crd(\gamma).
\end{equation}
In particular,
\begin{equation}\label{e:non-flat-balls}
    \sum_{Q\in\scN}\beta_\Sigma(Q)^2\diam(Q) \lesssim_A \sH^1(\Sigma) \text{ and } \sum_{Q\in\scN}\beta_\Gamma(Q)^2\diam(Q) \lesssim_A \ell(\Gamma) - \crd(\Gamma).
\end{equation}
\end{prop}
\begin{remark}
Both inequalities in \eqref{e:non-flat-balls} follow from \eqref{e:param-non-flat}. For the first, Theorem \ref{t:AO-param} gives a parameterization $\gamma$ of $\Sigma$ such that $\ell(\gamma) \leq 2\sH^1(\Sigma)$. For the second, $\Gamma$ comes with an injective parameterization $\gamma$ for which $\ell(\Gamma) = \ell(\gamma)$ and $\crd(\Gamma) = \crd(\gamma)$ by definition.
\end{remark}

Recall that $\scN$ consists of balls $Q$ which have $\beta_\Sigma(Q)\lesssim_{\epsilon_2} \tb{\tau_Q}$ for some $\tau_Q\in\Lambda(Q)$. That is, their beta number is dominated by the beta-tilde number of some arc they contain. Our strategy to prove \eqref{e:non-flat-balls} is to construct an appropriate mapping $Q\mapsto \tau_Q$ and prove that the associated sum $\sum_{Q\in\scN}\tb{\tau_Q}^2\diam(Q)$ is controlled. The first subsection below develops the general method for building an appropriate mapping and proving that the associated sum is controlled, while the second subsection applies the results of the first to proving \eqref{e:non-flat-balls}.

\begin{subsection}{Filtration construction and properties of \texorpdfstring{$\tilde{\beta}$ }{}}
It turns out to be most appropriate to derive bounds for sums over $\tb{\tau_Q}$ by only considering certain nice families of arcs called \textit{filtrations}. 
\begin{defn}[Filtrations \cite{Ok92}, \cite{Sc07}]
A \textit{filtration} of $\gamma$ is a family of subarcs $\scF = \bigcup_{n=0}^\infty\scF_n$ of $\gamma$ of whose constituent subfamilies $\{\scF_n\}_{n\geq0}$ satisfy the following:
\begin{enumerate}[label=(\roman*)]
    \item For all $\tau'\in\scF_{n+1}$, there exists a unique $\tau\in\scF_n$ such that $\Domain(\tau')\subseteq\Domain(\tau)$, \label{i:fil-tree}
    \item There exist constants $\underline{A},A > 0,\ \rho < 1$ such that for all $n\geq 0$ and $\tau\in\scF_n$, $\underline{A}\rho^{-n} \leq \Diam(\tau) \leq A\rho^{-n}$,\label{i:fil-diam}
    \item For all $\tau,\tau'\in\scF_n$, either $\tau = \tau'$ or $\#(\Domain(\tau)\cap\Domain(\tau')) \leq 1$,\text{ and} \label{i:fil-overlap}
    \item $\bigcup_{\tau\in\scF_0} \Domain(\tau) = \bigcup_{\tau'\in\scF_n} \Domain(\tau')$.\label{i:fil-partition}
\end{enumerate}
\end{defn}
We are interested in constructing filtrations with constituent arcs associated to subfamilies of $\scN$ because of the following lemma:
\begin{lem}\protect(\cite{Ok92}, cf. \cite{Sc07} Lemma 3.11)\label{l:nonflat}
Let $\mathscr{F}$ be a filtration for $\gamma$. Then
\begin{equation}\label{e:fil-sigma}
    \sum_{\tau\in\mathscr{F}}\tilde{\beta}(\tau)^2\Diam(\tau) \lesssim_A \ell\left(\bigcup_{\tau\in\mathscr{F}_0}\tau\right) - \sum_{\tau\in\mathscr{F}_0}\Crd(\tau).
\end{equation}
If $\bigcup_{\tau\in\scF_0}\tau = \gamma$, then
\begin{equation}\label{e:fil-gamma}
    \sum_{\tau\in\mathscr{F}}\tilde{\beta}(\tau)^2\Diam(\tau) \lesssim_A \ell(\gamma) - \crd(\gamma).
\end{equation}
\end{lem}
\begin{proof}
We refer the reader to the proof of Lemma 3.11 in \cite{Sc07} for the the proof of \eqref{e:fil-sigma}. In order to prove \eqref{e:fil-gamma}, we follow Schul's aforementioned proof to the second to last equation of page 349. Summing this equation over $n$, we replace the first equation on page 350 with
\begin{equation*}
    \sum_{\tau\in\mathscr{F}}\frac{d_\tau^2}{\Diam(\tau)} \lesssim \sup_n\sum_{\tau\in\mathscr{F}_n}\sH^1(I_\tau) - \sum_{\tau\in\mathscr{F}_0}\sH^1(I_\tau) \lesssim \ell\left(\bigcup_{\tau\in\mathscr{F}_0}\tau\right) - \sum_{\tau\in\mathscr{F}_0}\Crd(\tau).
\end{equation*}
Finally, replace the following occurrences of $\ell\left(\bigcup_{\tau\in\mathscr{F}_0}\tau\right)$ on page 350 with $\ell\left(\bigcup_{\tau\in\mathscr{F}_0}\tau\right) - \sum_{\tau\in\mathscr{F}_0}\Crd(\tau)$. The result follows from the fact that $\bigcup_{\tau\in\mathscr{F}_0}\tau = \gamma$ and $\crd(\gamma) \leq \sum_{\tau\in\mathscr{F}_0}\Crd(\tau)$ by the triangle inequality.
\end{proof}

In order to apply this lemma, we must preprocess the collections of dominating arcs $\{\tau_Q\}_{Q\in\scN}$ coming from each the families $\scG_1,\scG_3,\Delta_{2.2}\subseteq\scN$ individually into a bounded number of filtrations. $\protect{\cite{Sc07}}$ provides Lemma 3.13 for this. However, the statement and proof of the lemma as written contain errors which must be addressed.

First, the statement of the lemma makes the following claim: There exists $c_0 > 0$ such that for any arcs $\tau \subseteq \tau'$ with $\Diam(\tau') \leq 2 \Diam(\tau)$, we have $\tb{\tau'} \geq c_0\tb{\tau}$. In general, this is false. For example, Figure \ref{fig:beta-tilde} gives two counterexamples for this claim. The problem is not an issue for the results of the paper; although the claim is not true in general, an inequality of this type does hold for the specific arc families we will use. The proof of the lemma also contains a gap which is fixed in a modified, more general version given below. Before we state the lemma, we give a definition:
\begin{defn}[Augmentations]
Fix an arc $\tau$. We refer to any arc $\tau' \supseteq \tau$ as a \textit{$\tau$-augmentation} if we can write
\begin{equation}\label{e:augment-form}
    \tau' = \eta_1 \cup \tau \cup \eta_2
\end{equation}
where $\eta_1,\eta_2$ are arcs such that 
\begin{equation}\label{e:augment-properties}
    \Diam(\eta_i) \leq \frac{1}{1000}\Diam(\tau) \text{ and } \Domain(\eta_i)\cap\Domain(\tau)\not=\varnothing.
\end{equation}
This also gives $\Diam(\tau') \leq \left( 1 + \frac{1}{100} \right)\Diam(\tau)$.
\end{defn}

\begin{lem}[prefiltration lemma \protect\cite{BM22a}]\label{l:build-filtrations} Let $\XX$ be a metric space and let $f:[0,1]\rightarrow\overline{\Sigma}$ be a continuous parameterization of a set $\overline{\Sigma}\subset\XX$. Assume that $\rho>1$, $0<\underline{A}<A<\infty$, and $J\geq 1$ is any integer such that $\rho^{J}>6A/\underline{A}$. Then for every family $\mathscr{F}^0=\bigcup_{n=n_0}^\infty\mathscr{F}_n^0$ of arcs in $\overline{\Sigma}$ with $\mathscr{F}^0_{n_0}\neq\emptyset$ satisfying \begin{enumerate}[label=(\roman*)]
\item bounded overlap: for every arc $\tau\in\mathscr{F}^0_n$, there exists no more than $C$ arcs $\tau'\in\mathscr{F}^0_n$ such that $\Domain(\tau)\cap\Domain(\tau')\neq\emptyset$ for some constant $C$ independent of $\tau$
\item geometric diameters: for every arc $\tau\in\mathscr{F}^0_n$, we have $\underline{A}\rho^{-n}\leq \Diam(\tau)\leq A\rho^{-n}$,
\end{enumerate} we can construct $5(A/\underline{A})CJ$ or fewer filtrations $\mathscr{F}^1=\bigcup_{n=n_1}^\infty \mathscr{F}^1_n$, $\mathscr{F}^2=\bigcup_{n=n_2}^\infty \mathscr{F}^2_n$, \dots, with starting index $n_j\in \{n_0,n_0+1,\dots,n_0+J-1\}$ for all $j$ and \begin{equation}\label{new-arc-diameters}
\frac{1}{1000}\left(\underline{A}\rho^{(J-1)n_j}\right)\rho^{-Jn} \leq \Diam(\tau)< \left(1+\frac{1}{100}\right)\left(A\rho^{(J-1)n_j}\right) \rho^{-Jn}\quad\text{for all $j$, $\tau\in\mathscr{F}^j_n$, $n\geq n_j$,}
\end{equation} 
such that for every index $n\geq n_0$ and arc $\tau\in\mathscr{F}^0_{n}$, there exists $\mathscr{F}^j$ (in the list of filtrations), an index $N$ with $n-n_j=J(N-n_j)$, and a $\tau$-augmentation $\tau'\in\mathscr{F}^j_N$. The assignment $(n,\tau)\mapsto (\mathscr{F}^j,N,\tau')$ is injective.
\end{lem}

\begin{remark}[Changes to the statement of Lemma \ref{l:build-filtrations}]
In our statement of this lemma, we have first changed \eqref{new-arc-diameters} by replacing a $\frac{1}{4}$ in the diameter lower bound with a $\frac{1}{1000}$ and by replacing a 2 in the corresponding upper bound with $1+\frac{1}{100}$. The result of this change is that the lemma produces a $\tau$-augmentation $\tau'$ such that $\Diam(\tau') \leq \left(1+\frac{1}{100}\right)\Diam(\tau)$ rather than a general extension $\tau'$ such that $\Diam(\tau')\leq 2\Diam(\tau)$ as in the statement in \cite{BM22a}. This improvement can be made as long as $J$ is sufficiently large by carefully following the proof in \cite{BM22a}. In the following paragraph, we give a sketch of how one can justify this change.

Indeed, each filtration $\mathscr{F}^j$ produced in the lemma is composed of essentially two types of arcs: $\tau$-type arcs which are extensions of arcs in $\mathscr{F}^0$ originally passed into the lemma and $\sigma$-type arcs which are the leftover arcs in-between the $\tau$ type arcs. Each arc $\xi\in\mathscr{F}_0$ is extended to a $\tau$-type arc by adding in a chain of arcs of geometrically decreasing diameter, beginning with diameter $\lesssim \rho^{-J}\diam(\xi)$. Hence, each chain can be made to have arbitrarily small diameter compared to $\xi$ as long as we take $J$ small enough. After doing this process to all arcs in one stage of the filtration, the remaining in-between arcs of the curve are broken up appropriately and either added to the filtration themselves or added onto the ends of the recently produced $\tau$-type arcs. By replacing the appropriate factors of $\frac{1}{4}$ in this stage of the proof with $\frac{1}{1000}$, we can ensure each in-between arc is chopped into arcs of no diameter greater than $\left(1+\frac{1}{100}\right)\left(A\rho^{(J-1)n_j}\right) \rho^{-Jn}$ and no less than $\frac{1}{1000}\left(\underline{A}\rho^{(J-1)n_j}\right)\rho^{-Jn}$. Hence, they satisfy the desired bounds and appending these arcs to the previously produced $\tau$-type arcs gives the form $\tau' = \eta_1 \cup \tau \cup \eta_2$.
\end{remark}
If we pass a family of arcs $\mathscr{F}$ into lemma \ref{l:build-filtrations}, we receive a finite family of filtrations $\mathscr{F}_j$ such that for any arc $\tau\in\mathscr{F}$, there exists a filtration $\mathscr{F}_i$ and a unique $\tau$-augmentation $\tau'\in\mathscr{F}_i$. In order to effectively apply the filtration estimate in Lemma \ref{l:nonflat}, we must show that taking the $\tau'$ rather than $\tau$ does not ruin the arc beta number estimate $\tb{\tau_Q} \gtrsim_{\epsilon_2} \beta_\Sigma(Q)$. That is, we would like to show that $\tb{\tau'} \gtrsim \tb{\tau}$ for any $\tau$-augmentation $\tau'$. 

Badger and McCurdy do not need this in \cite{BM22a} because they use $\beta_\tau(\Image(\tau))$ instead of $\tb{\tau}$ as their measure of non-flatness of arcs which requires slightly different definitions of the primary arc families. Here, we take the different approach of showing that mapping $\tau^\prime\mapsto \tau$ given in Lemma \ref{l:build-filtrations} also preserves $\tb{\cdot}$ in the sense that there exists a constant $c > 0$ such that $\tb{\tau} \geq c\tb{\tau^\prime}$ for any arc $\tau^\prime$ in one of the particular families $\mathscr{F}$ which we pass into Lemma \ref{l:build-filtrations}. 

Fix an arc $\tau$ and let $\tau'$ be a $\tau$-augmentation. We begin with the simple observation that if $\tau$ has large Jones beta number, then $\tb{\tau'}\gtrsim\tb{\tau}$ trivially.
\begin{remark}\label{rem:jones-beta-tilde}
Let $\epsilon > 0$ and suppose $\beta_\tau(\Image(\tau)) \geq \epsilon$. Then
by definition, 
\begin{align*}
    \Drift(\tau') &\geq \beta_{\tau'}(\Image(\tau'))\Diam(\tau') \geq \frac{1}{2}\beta_{\tau'}(\Image(\tau))\Diam(\tau') \\
    &= \frac{1}{2}\beta_{\tau}(\Image(\tau))\Diam(\tau') \geq \epsilon\frac{\Diam(\tau')}{2}.
\end{align*}
where the second inequality follows from the fact that $\Image(\tau') \supseteq \Image(\tau)$ and $\Diam(\tau) \geq \frac{1}{2}\Diam(\tau')$. Hence, $\tb{\tau'} \geq \frac{\epsilon}{2} \geq \frac{\epsilon}{2}\tb{\tau}$. 
\end{remark}

This remark means that we can fix a small constant $\epsilon \gtrsim_A 1$ and achieve $\tb{\tau'} \gtrsim_\epsilon \tb{\tau}$ whenever $\tau$ satisfies $\beta_\tau(\Image(\tau)) \geq \epsilon$. It turns out that any remaining arc not covered by this case which we will need to pass into Lemma \ref{l:build-filtrations} will be a member of $\Lambda(Q)$ for some $Q$, meaning its endpoints lie in $\partial(2Q)$ and its image has nonempty intersection with $Q$. This geometric information is enough to conclude the desired bound.
\begin{lem}\label{l:nice-extensions}
There exists $c_1 > 0$ such that for all $Q\in\scG$ and $\tau\in\Lambda(Q)$, any $\tau$-augmentation $\tau'$ satisfies
\begin{equation}\label{e:beta-tilde-below}
    \tilde{\beta}(\tau') \geq c_1\tilde{\beta}(\tau).
\end{equation}
\end{lem}
Our goal for the rest of this section is to prove Lemma \ref{l:nice-extensions}. We begin by distinguishing between \textit{tall} and \textit{wide} arcs.
\begin{defn}[Tall and wide arcs]
Let $\tau:[a,b]\rightarrow\Sigma$ be an arc. One of the following two inequalities holds:
\begin{enumerate}[label=(\roman*)]
    \item $\Crd(\tau) < 100\Drift(\tau)$,\ \text{ or } \label{i:tall-arc}
    \item $\Crd(\tau) \geq 100\Drift(\tau)$\label{i:wide-arc}
\end{enumerate}
If $\tau$ satisfies \ref{i:tall-arc}, then we call $\tau$ \textit{tall}. If $\tau$ instead satisfies \ref{i:wide-arc}, then we call $\tau$ \textit{wide}. Tall arcs are allowed to drift very far from the line segment $\Edge(\tau)$ while wide arcs stay relatively close. Figure \ref{fig:beta-tilde} gives an example of each type.
\end{defn}

\begin{lem}\label{l:tall-extension}
Suppose $\tau$ is tall. Then $\tb{\tau'} \geq \frac{1}{4}\tb{\tau}$.
\end{lem}
\begin{proof}
Let $x,y\in\Image(\tau)$ such that $\Diam(\tau) = \dist(x,y)$. Then
\begin{equation}
    \Diam(\tau) \leq \dist(x,\Edge(\tau)) + \diam(\Edge(\tau)) + \dist(y,\Edge(\tau)) \leq 2\Drift(\tau) + \Crd(\tau) \leq 102\Drift(\tau).
\end{equation}
The augmentation $\tau'$ has the form $\tau' = \eta_1 \cup \tau \cup \eta_2$ where $\Diam(\eta_i) \leq \frac{1}{1000}\Diam(\tau) \leq \frac{102}{1000}\Drift(\tau)$. Hence, $\Drift(\tau') \geq \Drift(\tau) - \Diam(\eta_1) \geq \frac{1}{2}\Drift(\tau)$ and $\Diam(\tau') \leq \Diam(\tau) + \Diam(\eta_1) + \Diam(\eta_2) \leq 2\Diam(\tau)$. Therefore,
\begin{equation*}
    \tb{\tau'} = \frac{\Drift(\tau')}{\Diam(\tau')} \geq \frac{1}{4}\frac{\Drift(\tau)}{\Diam(\tau)} = \frac{1}{4}\tb{\tau}.
\qedhere\end{equation*}
\end{proof}
Hence, tall arcs extended via Lemma \ref{l:build-filtrations} satisfy \eqref{e:beta-tilde-below} with $c_1 = \frac{1}{4}$. With Lemma \ref{l:tall-extension}, we now only need a way of proving \eqref{e:beta-tilde-below} for a wide arc $\tau\in\Lambda(Q)$. The basic idea is as follows. The facts that $\tau$ is wide and $\Image(\tau)\cap Q\not=\varnothing$ mean that $\Edge(\tau)$ must have nonempty intersection with $\frac{3}{2}Q$. It suffices to show that there exists $x\in\Image(\tau)$ such that a fixed fraction of the value of $\dist(x,\Edge(\tau))$ comes from the direction perpendicular to $\Edge(\tau)$ rather than the direction parallel. This is proven in the following lemma:
\begin{lem}\label{l:wide-beta-below}
Suppose $\tau$ is a wide arc, and there exists $\alpha < 1$ and $x\in\Image(\tau)$ such that $\dist(x,\Line(\tau)) \geq \alpha \Drift(\tau)$. Then $\tb{\tau'} \geq \frac{\alpha}{2000}\tb{\tau}$.
\end{lem}
\begin{proof}
Define $B_a \vcentcolon= B\left(\Start(\tau),\frac{\alpha}{1000}\Drift(\tau)\right)$ and $B_b \vcentcolon= B\left(\End(\tau),\frac{\alpha}{1000}\Drift(\tau)\right)$. Suppose first that either $\Edge(\tau') \cap B_a =\varnothing$ or $\Edge(\tau')\cap B_b =\varnothing$. Assume without loss of generality that the latter holds. Then $\End(\tau)\in\Image(\tau')$ so that $\Drift(\tau') \geq \dist(\End(\tau),\Edge(\tau')) \geq \frac{\alpha}{1000}\Drift(\tau)$. This implies
\begin{equation*}
    \tb{\tau'} = \frac{\Drift(\tau')}{\Diam(\tau')} \geq \frac{\alpha}{2000}\frac{\Drift(\tau)}{\Diam(\tau)} = \frac{\alpha}{2000}\tb{\tau}.
\end{equation*}
Now, suppose that $\Edge(\tau')$ has nonempty intersection with both $B_a$ and $B_b$. Assume without loss of generality that $\Line(\tau)$ is the $e_1$-axis. Because $\tau$ is wide, $\Drift(\tau) \leq \frac{1}{100}\Crd(\tau)$ so that $B_a\cap B_b = \varnothing$ and $\Edge(\tau)$ hits both ends of the cylinder of length $\Crd(\tau) - 2\frac{\alpha}{1000}\Drift(\tau) \geq \frac{99}{100}\Crd(\tau)$ and radius $\frac{\alpha}{1000}\Drift(\tau)$ whose central axis is collinear with the $e_1$-axis. Let $\theta$ be the angle between $\Edge(\tau')$ and $\Edge(\tau)$. (We measure this by translating $\Edge(\tau')$ to intersect $\Edge(\tau)$, then measuring the angle in the plane containing $\Edge(\tau)$ and $\Edge(\tau')$.) We conclude
\begin{equation}\label{e:small-slope}
    \tan(\theta) \leq \frac{2\frac{\alpha}{1000}\Drift(\tau)}{\frac{99}{100}\Crd(\tau)} \leq \frac{\alpha}{100}\frac{\Drift(\tau)}{\Crd(\tau)} \leq 10^{-4}\alpha.
\end{equation}
We will derive a lower bound for $\Drift(\tau')$ in terms of $\Drift(\tau)$ by showing that $\Edge(\tau')$ remains much closer to $\Line(\tau)$ than the point $x$ is. We know $\Diam(\tau) \leq 2\Drift(\tau)+\Crd(\tau) \leq 2\Crd(\tau)$ so that the fact that $\tau'$ is a $\tau$-augmentation means that $\End(\tau') \in B(\End(\tau),\frac{1}{1000}\Diam(\tau)) \subseteq B(\End(\tau),\frac{1}{500}\Crd(\tau))$. A similar result holds for $\Start(\tau')$. A very rough estimate gives
\begin{equation*}
    \sup_{y\in\Edge(\tau')}\dist(y,\Line(\tau)) \leq \frac{\alpha}{1000}\Drift(\tau) + 2\Crd(\tau) \tan(\theta) \leq \frac{\alpha}{25}\Drift(\tau).
\end{equation*}
We conclude
\begin{align*}
    \Drift(\tau') &\geq \dist(x,\Edge(\tau')) \geq \dist(x,\Line(\tau)) - \sup_{y\in\Edge(\tau')}\dist(y,\Line(\tau))\\
    &\geq \alpha\Drift(\tau) - \frac{\alpha}{25}\Drift(\tau) \geq \frac{\alpha}{2}\Drift(\tau).
\end{align*}
Therefore, $\tb{\tau'} \geq \frac{\alpha}{4}\tb{\tau}$.
\end{proof}
\begin{remark}\label{rem:cone}
One can derive the existence of a point $x$ as in Lemma \ref{l:wide-beta-below} by showing that $\tau$ lies outside half cones centered at $\Start(\tau)$ and $\End(\tau)$ pointing away from $\Edge(\tau)$ of aperture $2\theta$ such that $\tan(\theta) \geq \alpha$. Indeed, then every point $y\in\Image(\tau)$ satisfies $\dist(y,\Line(\tau)) \geq \alpha\dist(y,\Edge(\tau))$ so that any point $x\in\Max(\tau)$ satisfies $\dist(x,\Line(\tau)) \geq \alpha\Drift(\tau)$.
\end{remark}
\begin{figure}
    \centering
    \includegraphics[scale=0.9]{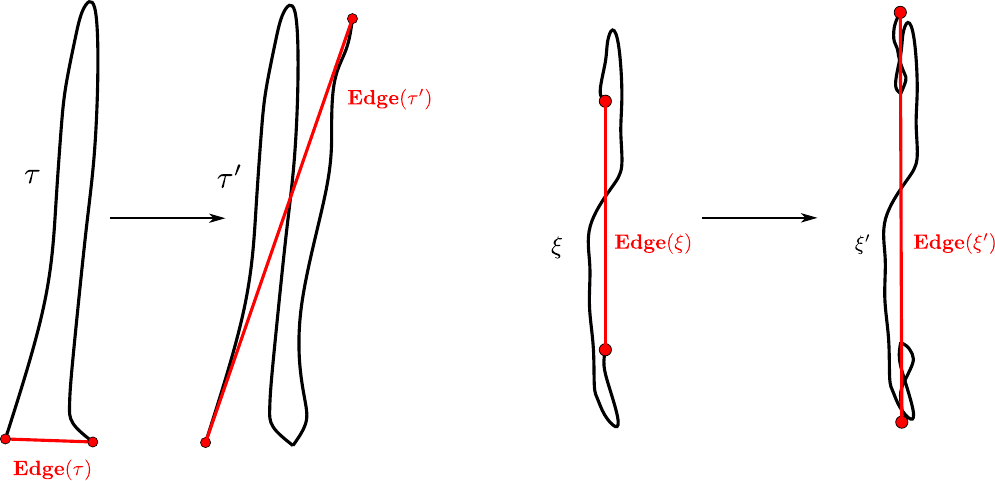}
    \caption{The arc $\tau$ is an example of a tall arc while $\xi$ is an example of a wide arc (neither are drawn to scale, but we hope the ideas are clear). Both of these arcs admit extensions $\tau'$ and $\xi'$ such that $\Diam(\tau') \leq 2\Diam(\tau)$ and $\Diam(\xi') \leq 2\Diam(\xi)$, but $\tau'$ and $\xi'$ have much smaller $\tilde{\beta}$ then $\tau$ and $\xi$. Arcs of the first type are excluded in our analysis by enforcing $\tau$-augmentations to extend only a small distance from the endpoints of $\tau$, while Lemma \ref{l:wide-beta-below} gives conditions for excluding arcs of the second type. (Roughly speaking, if $\xi\in\Lambda(Q)$, then it is not allowed to extend outwards in the direction parallel to its chord line outside of the ball $2Q$.)} 
    \label{fig:beta-tilde}
\end{figure}

With this, we can now give the proof of Lemma \ref{l:nice-extensions}.
\begin{proof}[Proof of Lemma \ref{l:nice-extensions}]
First, suppose that $\Edge(\tau)\cap \frac{3}{2}Q = \varnothing$. Then since $\tau\in\Lambda(Q)$, $\Image(\tau)\cap Q\not= \varnothing$ and we get $\Drift(\tau) \geq \frac{1}{2}\rad(Q) \geq \frac{1}{8}\diam(2Q) \geq \frac{1}{8}\Crd(\tau)$ so that $\tau$ is tall. Lemma \ref{l:tall-extension} implies $\tb{\tau'} \geq \frac{1}{4}\tb{\tau}$ as desired.

Now, suppose that $\Edge(\tau)\cap \frac{3}{2}Q\not=\varnothing$. Our goal is to apply Lemma \ref{l:wide-beta-below}. By Remark \ref{rem:cone}, it suffices to show that there is a $\theta > 0$ independent of $\tau$ such that the cone of aperture $\theta$ centered at $\End(\tau)$ (and $\Start(\tau)$) pointing away from $\Edge(\tau)$ lies entirely outside the ball $2Q$. Intuitively, this is true because the fact that $\Line(\tau)\cap\frac{3}{2}Q\not=\varnothing$ implies that every line in the tangent plane to $\partial(2Q)$ at $\End(\tau)$ makes large angle with $\Line(\tau)$. We supply the full details below.

Let $P$ be any two-dimensional affine plane containing $\Edge(\tau)$ and assume without loss of generality that $2Q = B(0,1),\ \Line(\tau) = \{de_1 + te_2:t\in\R\}$ for some $d < \frac{3}{4}$, and $P = \{de_1 + te_2 + sv\ :\ t,s\in\R,\ |v| = 1,\ v_2 = 0\}$. First, we show that $\Edge(\tau)$ also intersects a central ball in the disk $P\cap 2Q$.
\vspace{1ex}
\begin{claim}{}
$\Edge(\tau)\cap\frac{3}{4}(P\cap 2Q)\not=\varnothing.$
\end{claim}
\vspace{1ex}
\begin{claimproof}{}
$P\cap B(0,1)$ is a disk whose boundary has points satisfying the equation
\begin{equation*}
    (d+sv_1)^2 + t^2 + s^2(1-v_1^2) = 1 \iff (s+dv_1)^2 + t^2 = 1 - d^2(1-v_1^2).
\end{equation*}
This is a circle with radius $\sqrt{1 - d^2(1-v_1^2)}$ and center $(s,t) = (-dv_1, 0)$ which corresponds to the point $de_1 - dv_1v$. We want to show that $de_1\in\frac{3}{4}(P\cap B(0,1))$. But $|de_1 - (de_1 - dv_1v)| = |dv_1| \leq d \leq \frac{3}{4}$, as desired
\end{claimproof}
\vspace{1ex}

Now, it suffices to assume that $\Line(\tau) = \{de_1 + te_2\ :\ t\in\R\}\subseteq\R^2$ and to prove that there is $\theta$ such that the angle between $\Line(\tau)$ and any line tangent to $\partial B(0,1)\subseteq\R^2$ at $x\in\Edge(\tau)\cap\partial B(0,1) = \{(d,\sqrt{1-d^2}),\ (d, -\sqrt{1-d^2})\}$ makes angle greater than $\theta$. But by implicit differentiation of the equation $x^2 + y^2 = 1$, we see that
\begin{equation*}
    \frac{dy}{dx}\bigg|_{(x,y)=(d,-\sqrt{1-d^2})} = -\frac{x}{y}\bigg|_{(x,y)=(d,-\sqrt{1-d^2})} = \frac{\sqrt{1-d^2}}{d} \geq \frac{\sqrt{7}}{3}. 
\end{equation*}
Hence, we can take $\theta$ such that $\arctan{\theta} < \frac{\sqrt{7}}{3} < 2$. We apply Lemma \ref{l:wide-beta-below} with $\alpha = \frac{1}{2}$ to get $\tilde{\beta}(\tau') \geq \frac{1}{4000}\tilde{\beta}(\tau)$. This proves we can take $c_1 = \frac{1}{4000}$.
\end{proof}
\end{subsection}

\begin{subsection}{Bounds on the \texorpdfstring{$\scG_3,\scG_1, \text{ and }\Delta_{2.2}$ sums}{}}
In this subsection, we use the results from the previous subsection to prove Proposition \ref{p:non-flat}. The proofs are mostly adaptations of those for the corresponding lemmas in \cite{Sc07}.

The following three proofs share the same structure, each proving the desired bound for a particular family $\scC\in\{\scG_3,\scG_1,\Delta_{2.2}\}$. In each case, we define a mapping from $Q$ to some associated arc $\tau_Q$. We then show that the collection $\{\tau_Q\}_{Q\in\scC}$ satisfies the geometric diameters and bounded overlap properties necessary to apply Lemma \ref{l:build-filtrations}. This gives a bounded number of filtrations $\scF_{\scC}^j$ such that each $\tau_Q$ has a $\tau_Q$-augmentation $\tau_Q'$ as in the conclusion of Lemma \ref{l:build-filtrations}. The desired bound then follows from applying Lemma \ref{l:nonflat} to each of the filtrations as long as $\beta_\Sigma(Q) \lesssim \tb{\tau_Q'}$. This $\tilde{\beta}$ inequality is achieved by either the fact that $\tau_Q$ uniformly has $\beta_{\tau_Q}(\Image(\tau_Q)) > \epsilon$ or by showing that $\tau_Q$ satisfies the hypotheses of Lemma \ref{l:nice-extensions}. 

\begin{prop}[cf. \cite{Sc07} Lemma 3.16]\label{G3-bound}
\begin{equation*}
    \sum_{Q\in\scG_3}\beta_\Sigma(Q)^2\diam(Q) \lesssim_{J,A,\epsilon_2} \sH^1(\Sigma) \text{ and } \sum_{Q\in\scG_3}\beta_\Gamma(Q)^2\diam(Q) \lesssim_{J,A,\epsilon_2} \ell(\Gamma) - \crd(\Gamma).
\end{equation*}
\end{prop}
\begin{proof}
We begin by defining a new family $\scE\subseteq \scG_3$ and proving the claim for $\scE\cap\scG_3$ in place of $\scG_3$. Define
\begin{align*}
    \scE \vcentcolon= \bigg\{Q\in\scG_3 : &\exists\eta\subseteq\gamma_{12Q},\ \Diam(\eta) \geq \diam(Q)\ \text{and } \\
    &\tb{\eta'} \geq 10^{-6}\epsilon_2\beta_\Sigma(Q)\ \text{for all } \eta'\supseteq \eta\ \text{with } \Diam(\eta) \leq \Diam(\eta') \leq \left(1+\frac{1}{100}\right)\Diam(\eta)\bigg\}.
\end{align*}
We will build an appropriate mapping $Q\mapsto \tau_Q\subseteq \gamma_{12Q}$ to pass into Lemma \ref{l:build-filtrations}. We will then apply Lemma \ref{l:nonflat} to conclude the result.

Now, for any $Q\in\scE$ we have the existence of an arc $\eta_Q\subseteq \gamma_{12Q}$ with $\diam(Q) \leq \Diam(\eta_Q) \leq 24\diam(Q)$ and $\tb{\eta_Q} \geq 10^{-6}\epsilon_2\beta_\Sigma(Q)$. We define $\tau_Q \vcentcolon= \eta_Q$. 

In order to apply Lemma \ref{l:build-filtrations}, we must verify that the family $\{\tau_Q\}_{Q\in\scE}$ has geometric diameters and bounded overlap. The diameter requirement is satisfied by definition, so we only need to prove that $\scE_{Q} \vcentcolon= \{R\in\scE : \diam(R) = \diam(Q),\ \tau_R\cap \tau_Q\not=\varnothing\}$ satisfies $\#(\scE_Q) \leq C$ for some $C$ independent of $Q$. Using the parameterization $\gamma$, we can put a total order on balls with diameter equal to $\diam(Q)$ by setting $R < Q$ if and only if $\gamma_R^{-1}(x_R) < \gamma_Q^{-1}(x_Q)$. Because $X_n$ is finite, there exist balls $R_1,R_2\in\scE_Q$ which are respectively maximal and minimal in $\scE_Q$ with respect to this ordering. By definition, any $R\in\scE_Q$ must satisfy $x_R \in \gamma_{12R_1}\cup \gamma_{12Q} \cup \gamma_{12R_2}$. But, since $\tb{\gamma_{12Q'}}\leq \epsilon_2$ for all $Q'\in\scE$, the set $\Image(\gamma_{12Q'})$ is contained in a cylinder of width at most $\epsilon_2(24\diam(Q'))$ and length at most $24\diam(Q')$. Since net points on the scale of $Q'$ must be separated by distance at least $\frac{\diam(Q')}{2A}$, the net points must be separated by at least distance $\frac{\diam(Q')}{4A}$ along the axis of the cylinder because $\epsilon_2 \ll A^{-1}$. This means the number of net points on each of $\gamma_{12Q'}, \gamma_{12R_1}, \gamma_{12R_2}$ is less than $24\diam(Q')\cdot\frac{\diam(Q')}{4A} + 1\leq 100A$ so that $\#(\scE_Q) \leq 300A$ as desired.

This verifies the geometric diameter and bounded overlap conditions for $\{\tau_Q\}_{Q\in\scE}$. We apply Lemma \ref{l:build-filtrations} to receive a bounded number of filtrations $\scF_{\scE}^j,\ j\in J_{\scE}$ such that for any $\tau_Q$, there exists a $\tau_Q$-augmentation $\tau_Q'\in\scF_{\scE}^j$ for some $j$. Therefore, the definition of $\scE$ also implies that $\tb{\tau_Q'} \geq10^{-6}\epsilon_2\beta_\Sigma(Q)$. Therefore, we have
\begin{equation*}
    \sum_{Q\in\scG_3\cap\scE}\beta_\Sigma(Q)^2\diam(Q) \lesssim_{\epsilon_2} \sum_{j\in J}\sum_{\tau\in\scF_{\scE}^j}\tb{\tau}^2\Diam(\tau) \lesssim_{J,A} \ell(\gamma)-\crd(\gamma)
\end{equation*}
using Lemma \ref{l:nonflat}. This proves the desired inequalities for $\scG_3\cap\scE$. We will now prove this for $\scG_3\setminus\scE$.

Indeed, fix $Q\in\scG_3\setminus\scE$. We look to build an appropriate mapping $Q\mapsto \tau_Q$ to pass into Lemma \ref{l:build-filtrations}. Let $x\in\Sigma\cap Q$ be such that $\dist(x,\bigcup_{\tau\in S(Q)}\Image(\tau))$ is maximal and let $\xi_Q\in\Lambda(Q)\setminus S(Q)$ be such that $x\in\Image(\xi_Q)$. By the definition of $\scG_3$, $\beta_{S(Q)}(Q)\leq\epsilon_1\beta_\Sigma(Q)$ so that
\begin{equation*}
    \beta_{S(Q)\cup\xi_Q}(Q) \geq \frac{1}{3}\beta_\Sigma(Q).
\end{equation*}
Indeed, otherwise $\Sigma\cap Q$ is contained in a cylinder of width $\frac{2}{3}\beta_\Sigma(Q)\diam(Q) + c\epsilon_1\beta_\Sigma(Q)\diam(Q)$ with $\epsilon_1 \ll c$ contradicting the definition of $\beta_\Sigma(Q)$. We set $\tau_Q \vcentcolon= \xi_Q$. We now verify that the family $\{\tau_Q\}_{Q\in\scG_3\setminus\scE}$ has geometric diameters and bounded overlap in order to apply the pre-filtration lemma.

Since $\tau_Q\in\Lambda(Q)$, we know $\tau_Q\cap Q\not=\varnothing$ and $\tau_Q\cap H\setminus 2Q\not=\varnothing$ so that
\begin{equation*}
    2\diam(Q) \geq \Diam(\tau_Q) \geq \rad(2Q) - \rad(Q) = \frac{1}{2}\diam(Q).
\end{equation*}
In order to verify bounded overlap, set $\scG_Q \vcentcolon= \{R\in \scG_3\setminus \scE : \diam R = \diam Q,\ \tau_R\cap\tau_Q\not=\varnothing\}$ so that we want to show $\#(\scG_Q) \leq C$ for $C$ independent of $Q$. Assume first that $\beta_\Sigma(R) \leq \beta_\Sigma(Q)$. Because $\tau_R\cap\tau_Q\not=\varnothing$, we have $2Q\cap 2R\not=\varnothing$ so that $x_R\in 8Q\subseteq 12R$. Let $\gamma_{12R}|_{8Q}$ be a largest diameter subarc of $\gamma_{12R}$ which is in $\Lambda(8Q)$. We want to show that $\gamma_{12R}|_{8Q}\in S_{8Q}$. This is the reason for the addition of $\scE$. Because $R\not\in\scE$ and $\Diam(\gamma_{12R}|_{8Q}) \geq \diam R$, there exists some extension $\eta'\supseteq\gamma_{12R}|_{8Q}$ such that $\tb{\eta'} < 10^{-6}\epsilon_2\beta_\Sigma(R)$. But, $x_R\in 4Q = \frac{1}{2}8Q$ implies we can apply Lemma \ref{l:nice-extensions} to conclude
\begin{equation*}
    \tb{\gamma_{12R}|_{8Q}}\leq 4000\tb{\eta'} \leq \frac{4000}{10^6}\epsilon_2\beta_\Sigma(R) < \epsilon_2\beta_\Sigma(8Q).
\end{equation*}
This proves that $\gamma_{12R}|_{8Q}\in S_{8Q}$. In particular, the fact that $\beta_{S_{8Q}}(8Q) \leq \epsilon_1\beta_\Sigma(Q)$ implies that $x_R$ is contained in a small tube around $\gamma_{8Q}$. We assumed that $\beta_\Sigma(R) \leq \beta_\Sigma(Q)$ for this, but if $\beta_\Sigma(R) > \beta_\Sigma(Q)$, then running the argument for $\gamma_{12Q}|_{8R}$ in place of $\gamma_{12R}|_{8Q}$ proves the same claim with $Q$ and $R$ reversed. In either case, all $R\in\scG_Q$ are contained in a small neighborhood of the almost flat arc $\gamma_{12Q}$, proving $\#(\scG_Q)\leq 100A$.

This verifies the geometric diameters and bounded overlap condition, so we apply Lemma \ref{l:build-filtrations} to get a bounded family of filtrations $\scF^j_{\scG_3},\ j\in J_{\scG_3}$ such that for each $Q\in\scG_3\setminus\scE$, there exists $\tau_Q'\in\scF^j_{\scG_3}$ for some $j$ which is a $\tau_Q$-augmentation. Because $\tau_Q\in\Lambda(Q)\setminus S_{Q}$, we apply Lemma \ref{l:nice-extensions} to conclude 
\begin{equation*}
    \tb{\tau_Q'} \geq \frac{1}{4000}\tb{\tau_Q} \geq \frac{\epsilon_2}{4000}\beta_\Sigma(2Q) \geq \frac{\epsilon_2}{8000}\beta_\Sigma(Q).
\end{equation*}
Therefore, Lemma \ref{l:nonflat} implies
\begin{equation*}
    \sum_{Q\in\scG_3\setminus\scE}\beta_\Sigma(Q)^2\diam(Q) \lesssim_{\epsilon_2} \sum_{j\in J_{\scG_3}}\sum_{\tau\in\scF_{\scG_3}^j}\tb{\tau}^2\Diam(\tau) \lesssim_{J,A} \ell(\gamma) -\crd(\gamma).
\qedhere\end{equation*}
\end{proof}

\begin{prop}[cf. \cite{Sc07} Lemma 3.14]\label{G1-bound}
\begin{equation*}
    \sum_{Q\in\scG_1}\beta_\Sigma(Q)^2\diam(Q) \lesssim_{J,A} \sH^1(\Sigma) \text{ and } \sum_{Q\in\scG_1}\beta_\Gamma(Q)^2\diam(Q) \lesssim_{J,A} \ell(\Gamma) - \crd(\Gamma).
\end{equation*}
\end{prop}
\begin{proof}
Let us build an appropriate mapping $ Q\mapsto \tau_{ Q}$. Put $ Q = B(x_Q,A\lambda2^{-n})$. We define $\tau_{ Q}$ in one of two ways:
\begin{enumerate}[label=(\roman*)]
    \item If $\#(\gamma_{ Q}\cap X_n) \leq 3\lambda A$, then set $\tau_{ Q} \vcentcolon= \gamma_{ Q}$.
    \item Otherwise, let $\tau_{ Q}$ be a subarc of $\gamma_{ Q}$ containing $x_Q$ such that $\#(\tau_{ Q}\cap X_n) = 3\lambda A$.
\end{enumerate}
In order to apply Lemma \ref{l:build-filtrations}, we must check the geometric diameters and bounded overlap conditions. The geometric diameters condition follows in case (i) because $\gamma_{ Q}\in\Lambda(Q)$. It follows in case (ii) because the net point condition implies that $2^{-n} \leq \Diam(\tau_{ Q}) \leq 2\diam(Q) = 4\lambda A 2^{-n}$. In either case, bounded overlap follows from a similar argument to that in the proof of Proposition \ref{G3-bound}. Because each arc is centered on a unique net point in $X_n$ and each arc contains at most $3\lambda A$ net points, ordering the net points via the parameterization $\gamma$ shows that there can be at most $6\lambda A$ net points (inclusive) between intersecting arcs $\tau_{ Q}$ and $\tau_{R}$ for either arcs of type (i) or (ii) above. This proves $\#(\{ R\in\scG_1: \diam( R) = \diam( Q),\ \tau_{ R}\cap\tau_{ Q} \not=\varnothing\}) \leq 12\lambda A$.

Applying Lemma \ref{l:build-filtrations} to the collections of type (i) and (ii) arcs above gives a family of filtrations $\scF_{\scG_1}^j,\ j\in J_{\scG_1}$ such that for any $Q\in\scG_3^\lambda$, there exists a $\tau_Q$-augmentation $\tau_Q'\in\scF_{\scG_1}^j$ for some $j$. In order to finish, we only have to check that $\tb{\tau_Q'} \gtrsim_A \beta_\Sigma(Q)$. For arcs of type (i), $\tau_{Q} = \gamma_{ Q}\in\Lambda(Q)$ so that Lemma \ref{l:nice-extensions} gives the result. For arcs of type (ii), observe that $\#(\tau_{Q}\cap X_n) = 3\lambda A$ implies that $\beta_{\tau_Q'}(\tau_Q') \gtrsim \beta_{\tau_Q}(\tau_{Q}) \gtrsim_{A} 1 \gtrsim_A \beta_\Sigma(Q)$. Therefore, applying Lemma \ref{l:nonflat} to this collection of filtrations gives
\begin{equation*}
    \sum_{Q\in\scG_1}\beta_\Sigma(Q)^2\diam(Q) \lesssim_A \sum_{j\in J_{\scG_1}}\sum_{\tau\in\scF_{\scG_1}^j}\tb{\tau}^2\diam(\tau) \lesssim_{J,A} \ell(\gamma) - \crd(\gamma).
\qedhere\end{equation*}
\end{proof}

\begin{prop}[cf. \cite{Sc07} Lemma 3.24]\label{Delta2.2-bound}
\begin{equation*}
    \sum_{Q\in\Delta_{2.2}}\beta_\Sigma(Q)^2\diam(Q) \lesssim_{J,A} \sH^1(\Sigma) \text{ and } \sum_{Q\in\Delta_{2.2}}\beta_\Gamma(Q)^2\diam(Q) \lesssim_{J,A} \ell(\Gamma) - \crd(\Gamma).
\end{equation*}
\end{prop}
\begin{proof}
Let us build an appropriate mapping $Q\mapsto \tau_{ Q}$ as in the previous two propositions. Again, let $Q = B(x_Q,\lambda A2^{-n})$ By definition, there exists $\xi_Q\in\Lambda(Q)\setminus S_{Q}$ such that $\xi_{Q}\cap U_{Q}^x \not= \varnothing$. We define $\tau_Q$ in one of two ways
\begin{enumerate}[label=(\roman*)]
    \item If $\#(\{R\in\Delta_{2.2} : \diam(R) = \diam(Q),\ \xi_Q\cap U_{R}^x \not=\varnothing\}) \leq 9\lambda A$, then set $\tau_Q = \xi_Q$.
    \item Otherwise, let $\tau_Q$ be a subarc of $\xi_Q$ such that $\tau_Q\cap U_Q^x\not=\varnothing$ and $\#(\{R\in\Delta_{2.2} : \diam(R) = \diam(Q),\ \xi_Q\cap U_{R}^x \not=\varnothing\}) = 9\lambda A$.
\end{enumerate}
Type (i) arcs have geometric diameters since $\xi_Q\in\Lambda(Q)$. Type (ii) arcs have nonempty intersection with two distinct, disjoint cores $U_Q^x$ and $U_R^x$ so that $\Diam(\xi_Q) \geq \dist(\partial U_Q^x, \partial U_Q^{xx}) \gtrsim \diam(Q)$. To check bounded overlap, we argue almost exactly as in the corresponding part of the proof of Proposition \ref{G1-bound}. Indeed, $\tau_Q\cap U_Q^x\not=\varnothing$ so that we can order the arcs $\tau_Q$ via the parameterization $\gamma$ by the ordering of $x_Q\in U_Q^x$. There can be at most $18\lambda A$ net points separating $x_Q$ and $x_R$ for admissible $R$ so that $\#(\{R\in\Delta_{2.2} : \diam(R) = \diam(Q),\ \tau_R\cap\tau_Q\not=\varnothing\}) \leq 36\lambda A$.

Applying Lemma \ref{l:build-filtrations} gives a bounded number of filtrations $\scF_{\Delta}^j$ such that each $Q\in\Delta_{2.2}$ has an associated $\tau_Q$-augmentation $\tau_Q'$. We only need to show that $\tb{\tau_Q'} \gtrsim \beta_\Sigma(Q)$. This follows for type (i) arcs by Lemma \ref{l:nice-extensions} and for type (ii) arcs by the fact that $\#(\{R\in\Delta_{2.2} : \diam(R) = \diam(Q),\ \xi_Q\cap U_{R}^x \not=\varnothing\}) = 9\lambda A$ implies $\tb{\tau_Q'} \gtrsim \beta_{\tau_Q}(\tau_Q) \gtrsim_A 1 \gtrsim_A \beta_\Sigma(Q)$ as in the proof of Proposition \ref{G1-bound}. The result follows by applying Lemma \ref{l:nonflat} to each filtration to get
\begin{equation*}
    \sum_{Q\in\Delta_{2.2}}\beta_\Sigma(Q)^2\diam(Q) \lesssim_A \sum_{j\in J_{\Delta_{2.2}}}\sum_{\tau\in\scF_{\Delta_{2.2}}^j}\tb{\tau}^2\diam(\tau) \lesssim_{J,A} \ell(\gamma) - \crd(\gamma).
\qedhere\end{equation*}
\end{proof}
\end{subsection}

\end{section}
\begin{section}{Almost flat arcs: \texorpdfstring{$\Delta_1,\Delta_{2.1}$}{}}\label{sec:almost-flat-arcs}
Our goal now is to prove the following proposition:
\begin{prop}\label{p:almost-flat}
Set $\scA\vcentcolon=\Delta_1\cup\Delta_{2.1}$. We have
\begin{align}
    \sum_{Q\in\scA}\beta_\Sigma(Q)^2\diam(Q) \lesssim_A \ell(\Sigma) \text{ and } \sum_{Q\in\scA}\beta_\Gamma(Q)^2\diam(Q) \lesssim_A \ell(\Gamma) - \crd(\Gamma).\label{e:almost-flat-bound}
\end{align}
\end{prop}

Recall that $\scA\subseteq\scG_2$ so that for any $Q\in\scA$, $\beta_\Sigma(Q) \leq \epsilon_1^{-1}\beta_{S(Q)}(Q)$. That is, the beta number of the union of images of almost flat arcs controls the total beta number for $Q$. For the purposes of estimating the beta-squared sum, we can essentially think of $\Sigma$ (or $\Gamma$) inside of $Q$ as consisting of a union of line segments (we have taken the parameter $\epsilon_2$ sufficiently small so that this heuristic holds at all scales we will perform estimates at). In Section \ref{subsec:almost-flat-arcs-sigma} we prove the first inequality in \eqref{e:almost-flat-bound}, finishing the proof of the Hilbert space necessary condition. In Section \ref{subsec:almost-flat-arcs-gamma} we prove the second, finishing the proof of Theorem \ref{t:thmA}.

We begin by giving some comments on the structure of almost flat arcs. Recall that an almost flat arc $\tau\in S(Q)$ satisfies the inequality
\begin{equation}\label{e:beta-tilde}
    \tb{\tau} \leq \epsilon_2\beta_\Sigma(Q).
\end{equation}
We interpret this as saying that $\tau$ is a \textit{very} small perturbation of $\Edge(\tau)$ relative to the overall flatness of $\Sigma$ inside $Q$. This means that $\tau$ is \textit{bilaterally} close to $\Edge(\tau)$, forcing $\tau$ to be ``diametrical'' and giving it the crossing property we prove in Lemma \ref{l:almost-flat-crossing} below. The condition \eqref{e:beta-tilde} is importantly stronger than the similar inequality
\begin{equation}\label{e:beta-tilde-replacement}
    \beta_{\tau}(\Image(\tau)) \leq \epsilon_2\beta_{\Sigma}(Q).
\end{equation}
This condition only forces the image of $\tau$ to be \textit{unilaterally} close to some line $L$ relative to the overall flatness of $\Sigma$ inside $Q$. This allows almost flat arcs which are ``radial'' rather than ``diametrical''. This is an important point at which the results here diverge from results of \cite{BM22b} in which analogous results are proven for this weaker notion of almost flat arcs in Banach spaces. 

We now record two lemmas needed in the following sections.
\begin{defn}[Cylinders]
Let $a,b\in \ell_2$, let $s\vcentcolon=[a,b]$ be a line segment, and let $r > 0$. We define the cylinder $C$ of radius $r$ around $s$ as
\begin{equation*}
    C(s,r) \vcentcolon= \{z\in\pi_{s}^{-1}(s) : \pi_{s}^\perp(z) \leq r\},
\end{equation*}
where $\pi_s$ is the orthogonal projection onto the line collinear with the line segment $s$ and $\pi_s^\perp:\ell_2\rightarrow\ell_2$ is the projection onto the corresponding affine orthogonal hyperplane. We also allow $s$ to be an affine line. For a segment $s$ as above, we define the faces
\begin{equation*}
    F_a(s,r) \vcentcolon= \{z\in C(s,r) : \pi_s(z) = a\} \text{ and } F_b(s,r) \vcentcolon= \{z\in C(s,r) : \pi_s(z) = b\}.
\end{equation*}
For any $\tau\in\Lambda(Q)$, $\tau\subseteq C(\Line(\tau),\tb{\tau}\Diam(\tau)) \subseteq C(\Line(\tau),\tb{\tau}\diam(2Q))$..
\end{defn}

\begin{lem}[Crossing Property]\label{l:almost-flat-crossing}
Let $Q\in\scG,\ \tau\in\Lambda(Q)$, and $\tau'\vcentcolon= [a_{\tau'},b_{\tau'}] \subseteq \Edge(\tau)\vcentcolon=[a_\tau,b_\tau]$. Let $\epsilon > 0$ such that $\tb{\tau} \leq \frac{\epsilon}{2}$. There exists an arc $\tau_0$ such that
\begin{enumerate}[label=(\roman*)]
    \item $\Domain(\tau_0) \subseteq \Domain(\tau)\cap\gamma^{-1}(C(\tau',\epsilon\diam(Q))),$ \text{ and} \label{i:tau0-in-cylinder}
    \item $\Diam(\tau_0) \geq \Diam(\tau')$. \label{i:tau0-diam}
\end{enumerate}
\end{lem}
\begin{proof}
Let $C\vcentcolon=C(\Line(\tau'),\epsilon\diam(Q))$. Because $\tb{\tau}\leq\frac{\epsilon}{2}$ and $\Diam(\tau)\leq 2\diam(Q)$, we know $\tau \subseteq C$. Because $\Image(\tau)$ is connected, $\pi_{\tau'}$ is continuous, and $a_\tau,b_\tau\in\Edge(\tau)\cap\tau$, there must exist $u\in\tau\cap F_{a_{\tau'}}$ and $v\in\tau\cap F_{b_{\tau'}}$. This implies that $\{t\in\Domain(\tau) : \tau(t) \in F_{b_{\tau'}}\}\not=\varnothing$ and $\{t\in\Domain(\tau) : \tau(t)\in F_{a_{\tau'}}\}\not=\varnothing$ so that we can further suppose without loss of generality that
\begin{equation}\label{e:tau-domain}
    0 \leq \inf\{t\in\Domain(\tau) : \tau(t)\in F_{a_{\tau'}}\} < \inf\{t\in\Domain(\tau) : \tau(t) \in F_{b_{\tau'}}\}.
\end{equation}
That is, $\tau$ enters $F_{a_{\tau'}}$ before $F_{b_{\tau'}}$. We define 
\begin{align*}
    t_2 &\vcentcolon= \inf\{t\in\Domain(\tau) :\tau(t)\in F_{b_{\tau'}}\},\\
    t_1 &\vcentcolon= \sup\{t\in\Domain(\tau) : t \leq t_2,\ \tau(t)\in F_{a_{\tau'}}\},\\
    \tau_0 &\vcentcolon= \tau|_{[t_1,t_2]}.
\end{align*}
Suppose without loss of generality that $\pi_{\tau'}(a_{\tau'}) \leq \pi_{\tau'}(b_{\tau'})$. We know $\tau(t_2)\in F_{b_{\tau'}}$ by the continuity of $\tau$. By the continuity of $\pi_{\tau'}$ and the definition of $t_2$, we also know that $\pi_{\tau'}(\tau(t)) \leq \pi_{\tau'}(b_{\tau'})$ for all $t \leq t_2$. On the other hand, the definition of $t_1$ implies that $\pi_{\tau'}(\tau(t)) \geq \pi_{\tau'}(a_{\tau'})$ for all $t_1 \leq t \leq t_2$ so that $\tau|_{[t_1,t_2]}\subseteq C(\tau',\epsilon\diam(Q))$. Item \ref{i:tau0-in-cylinder} follows. In fact, we can conclude $\tau(t_1)\in F_{a_{\tau'}}$ because the supremum in the definition of $t_1$ is over a non-empty set by \eqref{e:tau-domain}. Item \ref{i:tau0-diam} follows because $\Diam(\tau_0) \geq |b_{\tau'} - a_{\tau'}| = \Diam(\tau')$.
\end{proof}
For convenience, we also record an estimate for lower-bounding the diameter of chord segments of arcs which touch central balls inside of $Q$:
\begin{lem}\label{l:diam-low-bound}
Let $Q = B(x_Q,R)$ be a ball and let $0 < \alpha < 1$ be such that $\alpha^2 < 1/2$. Let $\varphi'\vcentcolon= [a_{\varphi'},b_{\varphi'}]$ be a line segment such that $a_{\varphi'},b_{\varphi'}\in \partial Q$ and $\varphi'\cap \alpha Q\not=\varnothing$. Then,
\begin{equation*}
    \sH^1([a_{\varphi'},b_{\varphi'}]) \geq \diam(Q)\left(1-2\alpha^2\right).
\end{equation*}
\end{lem}
\begin{proof}
We will first give a lower bound for the function $\sqrt{1-x}$, then apply this to a Pythagorean theorem estimate. Let $0 < x < \frac{1}{2}$ and observe that, by the generalized binomial theorem,
\begin{equation*}
    \sqrt{1-x} = \sum_{n=0}^\infty(-1)^n\frac{\frac{1}{2}(\frac{1}{2}-1)\cdots(\frac{1}{2}-n+1)}{n!}x^n \geq 1 - \sum_{n=1}^\infty x^n \geq 1 - x -\frac{x}{1-x} \geq 1-2x
\end{equation*}
using our assumption that $x \leq \frac{1}{2}$ in the last line. Now, let $y_{\varphi'}$ be the point in $\varphi'$ closest to $x_Q$. Then, using the Pythagorean theorem, we get $|y_{\varphi'} - a_{\varphi'}|^2 = |a_{\varphi'} - x_Q|^2 - |x_Q - y_{\varphi'}|^2$ from which we can estimate
\begin{align*}
    |y_{\varphi'} - a_{\varphi'}| \geq \sqrt{R^2 - (\alpha R)^2} \geq R\sqrt{1 - \alpha^2} \geq R\left(1-2\alpha^2\right).
\end{align*}
Applying the same argument to $|y_{\varphi'} - b_{\varphi'}|$, we get $\sH^1([a_{\varphi'},b_{\varphi'}]) = |y_{\varphi'} - b_{\varphi'}| + |y_{\varphi'} - a_{\varphi'}| \geq \diam(Q)(1-2\alpha^2)$.
\end{proof}
\begin{subsection}{Almost flat arcs for $\Sigma$}\label{subsec:almost-flat-arcs-sigma}
In this section, we complete our proof of \eqref{e:HS-nec}, the necessary condition in the Hilbert space traveling salesman theorem. We begin in Section \ref{subsec:martingale} by giving a general presentation of Schul's martingale construction. In Section \ref{subsec:sigma-delta1}, we give the first application of the martingale construction by repeating Schul's proof of the beta-squared sum bound for the family $\Delta_1$. Finally, in Section \ref{subsec:sigma-delta2.1} we give a new proof of the beta-squared sum bound for the family $\Delta_{2.1}$ using Schul's martingales again, filling in the final gap in proof of the Hilbert space necessary condition in \cite{Sc07}.
\begin{subsubsection}{Schul's Martingale Lemma}\label{subsec:martingale}
The martingale argument relies heavily on the structure of the cores for balls constructed in Proposition \ref{p:cores}, so we begin by giving some definitions and notation related to the families of cores. For the rest of this section, fix $0 < c < \frac{1}{4A}$ and $J\geq10$.
\begin{defn}[The tree structure of cores]
Fix a collection $\scL\subseteq\scG$. Proposition \ref{p:cores} gives a partition of $\scG$ into $J$ families $\{\scQ_j\}_{j=1}^J$ such that cores for balls inside $\scQ_j$ satisfy the inclusion and separation properties \ref{i:core-diam}, \ref{i:core-sep}, and \ref{i:core-nest} in Proposition \ref{p:cores}. Defining $\scL_j \vcentcolon= \scL\cap\scQ_j$, we see that for any $Q\in\scL_j$, either
\begin{enumerate}[label=(\alph*)]
    \item For all $Q'\in\scL_j$ such that $U^c_{Q'}\cap U_Q\not=\varnothing$, $U^c_{Q'}\subseteq U^c_Q$, \text{or}
    \item There exists $Q'\in\scL_j$ such that $U^c_Q\subsetneq U^c_{Q'}$.
\end{enumerate}
These set inclusion properties induce a partial order on $\scL$, giving it the structure of a forest in which the balls satisfying the first condition above are the roots of trees in the forest while the balls satisfying the second condition are descendants of some root. We denote the forest of trees (whose partial order depends on the constants $c$ and $J$) by $\mathcal{T}^{c,J}_{\scL} = \mathcal{T}^{c}_{\scL} = \mathcal{T}_{\scL}$ where we often suppress the constants when understood (in practice, we suppress $J$ more often than $c$ in the construction because $J$ will be fixed once and for all while $c$ will vary). We refer to the \textit{root} of $T\in\mathcal{T}^{c,J}_{\scL}$ as $Q(T)$. For each $Q\in T,\ Q\not=Q(T)$, there exists a unique minimal ball $P(Q)$ respect to the ordering of $T$ such that $U^c_{Q}\subsetneq U^c_{P(Q)}$. We call $P(Q)$ the \textit{parent} of $Q$. Similarly, for any $Q\in T$ we define the collection of \textit{children} of $Q$ in $T$ by
\begin{equation*}
    C(Q) \vcentcolon= \{Q'\in T : \text{$Q'$ is maximal in $T$ such that } U^c_{Q'}\subsetneq U^c_Q\}.
\end{equation*}
We also think of $C^1(Q) \vcentcolon= C(Q)$ as the first generation descendants of $Q$. Given the set $C^n(Q)$ for some $n\geq 1$, we define the $n+1$-th generation descendants of $Q$ as
\begin{equation*}
    C^{n+1}(Q) \vcentcolon= \{Q''\in C(Q') : Q'\in C^n(Q)\}.
\end{equation*}
Because each ball is either a root or its core is contained in the core of some root ball, we have
\begin{equation*}
    U_{\scL}^c\vcentcolon=\bigcup_{Q\in\mathcal{T}^c_\scL}U^c_{Q(T)} = \bigcup_{Q\in\scL}U^c_Q.
\end{equation*}
If $\scL\subseteq\scQ_j$ for some $j,\ 1\leq j \leq J$, then the union above over trees is disjoint. Otherwise it is a union in which each point is contained in at most $J$ constituent sets.
\end{defn}
We now give the definition of a martingale and relevant notions from probability theory.
\begin{defn}[Martingales]
Given a probability space $(\Omega,\mathcal{A},\mathbb{P})$, we define a \textit{filtration} of $\mathcal{A}$ to be an increasing sequence $(\mathcal{F}_n)_{n\geq0}$ of sub-$\sigma$-algebras of $\mathcal{A}$. We say that a collection of real-valued random variables $(X_n)_{n\geq0}$ on $\Omega$ is a \textit{martingale} with respect to $(\mathcal{F}_n)_{n\geq0}$ if for all $n\geq 0$,
\begin{enumerate}[label=(\roman*)]
    \item $X_n$ is $\mathcal{F}_n$-measurable,
    \item $\mathbb{E}(X_n) < \infty$,
    \item $\mathbb{E}(X_{n+1}|\mathcal{F}_n) = X_n$.
\end{enumerate}
where $\mathbb{E}(X_{n+1}|\mathcal{F}_n)$ denotes the conditional expectation of $X_{n+1}$ with respect to $\mathcal{F}_n$. Importantly, it is well-known that positive martingales converge pointwise almost surely. That is, if $X_n \geq 0$ for all $n\geq0$, then there exists a positive random variable $X$ such that $X(\omega) = \lim_{n\rightarrow\infty}X_n(\omega)$ for $\mathbb{P}$ almost all $\omega\in\Omega$. We will only consider positive martingales.
\end{defn}
\begin{remark}[Schul's martingales]
Let $\scL\subseteq \scG$ and form the forest $\mathcal{T}^{c}_{\scL}$ which gives $\scL$ a partial order, hence a child-parent structure as defined above. For each $Q\in\scL$, Schul constructs a martingale $(w^n_Q)_{n\geq0}$ supported inside $U^c_Q\cap\Sigma$. We define the remainder
\begin{equation*}
    R_Q \vcentcolon= U_Q\cap\Sigma \setminus \left( \bigcup_{Q'\in C(Q)} U^c_{Q'}\cap \Sigma \right)
\end{equation*}
so that
\begin{equation*}
    U^c_Q\cap \Sigma = \left(\bigcup_{Q'\in C(Q)}U^c_{Q'}\cap\Sigma\right) \cup R_Q
\end{equation*}
where the collection $\{U^c_{Q'}\cap\Sigma\}_{Q'\in C(Q)}\cup \{R_Q\}$ is pairwise disjoint. Applying this partitioning scheme iteratively to each $U^c_{Q'}\cap\Sigma$ in the union above, we see that for any $n\geq0$, we can write $U^c_Q\cap\Sigma$ as the partition.
\begin{equation}\label{e:filt-decomp}
    U^c_Q\cap\Sigma = R_Q \cup \left(\bigcup_{Q'\in C^1(Q)}R_{Q'}\right) \cup \ldots \cup \left( \bigcup_{Q'\in C^n(Q)}R_{Q'} \right) \cup \left(\bigcup_{Q'\in C^{n+1}(Q)} U^c_{Q'}\cap \Sigma\right)
\end{equation}
This gives a decomposition of $U^c_Q\cap\Sigma$ into ``atoms'' at the $(n+1)$-th level, from which we will define a filtration by setting $\mathcal{F}_n$ to be the sigma algebra generated by $\cup_{k\leq n}\{U^c_{Q'}\cap\Sigma\}_{Q'\in C^k(Q)}$. We will form the martingale $(w_Q^n)_{n\geq0}$ by setting $w_Q^0$ to be constant on $U^c_Q\cap\Sigma$ and defining $w_Q^{n+1}$ by distributing the mass that $w_Q^n$ assigns to $U^c_{Q'}\cap\Sigma$ for any $Q'\in C^n(Q)$ onto its constituent pieces $R_{Q'}\cup\bigcup_{Q''\in C(Q')}U^c_{Q''}\cap\Sigma = U^c_{Q'}\cap\Sigma$ with weighting factors depending on the size and number of children in $C(Q')$ and the length of the remainder $R_{Q'}$.
\end{remark}

\begin{lem}[Martingale construction]\label{l:martingale}
Fix a constant $D > 0,\ c <\frac{1}{4A}$ and let $\scL\subseteq \scG\cap\scQ_j$. Suppose that there exists a constant $q < 1$ such that for any $Q\in\scL$
\begin{equation}\label{e:martingale-decay}
    \frac{\diam(U^c_Q)}{\sum_{Q'\in C(Q)}\diam(U^c_{Q'}) + D\ell(R_Q)} \leq q.
\end{equation}
Then, there exists a collection of positive real-valued functions $\{w_Q\}_{Q\in\scL}$ satisfying
\begin{enumerate}[label=(\roman*)]
    \item $\int_\Sigma w_Qd\ell = \diam(U^c_Q),$\label{i:weight-diam}
    \item $\sum_{Q\in\scL}w_Q(x) \leq \frac{D}{1-q}\chi_{U^c_\scL}(x)\ \  \text{ for almost every $x\in\Sigma$}$, \label{i:weight-sum}
    \item $\supp(w_Q) \subseteq U^c_Q\cap\Sigma$.\label{i:weight-support}
\end{enumerate}
\end{lem}
\begin{proof}
We will suppress the superscript of the cores and write $U_Q = U_Q^c$. Fix $Q\in \scL$. For any set $E$ and function $w:\Sigma\rightarrow\R$, we let $w(E) = \int_E wd\ell$. Let $\mathcal{F}_n$ be the $\sigma$-algebra generated by $\cup_{k\leq n}\{U_{Q'}\cap\Sigma\}_{Q'\in C^k(Q)}$. We construct the function $w_Q$ as the pointwise limit of a martingale $(w^n_Q)_{n\geq 0}$ adapted to the filtration $(\mathcal{F}_n)_{n\geq0}$ with underlying finite measure $\ell|_{U_Q\cap\Sigma}$.
 We begin by defining the function $w^0_Q$:
\begin{equation*}
    w^0_Q(x) \vcentcolon= \frac{\diam(U_Q)}{\ell(U_Q\cap \Sigma)}\ \text{ for any $x\in U_Q\cap\Sigma$}.
\end{equation*}
The martingale sequence will have fixed total mass $w^0_Q(U_Q) = \diam(U_Q)$. We next define
\begin{equation*}
    s_Q \vcentcolon= \sum_{Q'\in C(Q)}\diam(U_{Q'}) + D\ell(R_Q) < \infty.
\end{equation*}
Given the function $w^n_Q$, we define $w^{n+1}_Q$ by readjusting the distribution of mass inside the cores $\{U_{Q'}\}_{Q'\in C^{n+1}(Q)}$ and leaving the remainders $R_{Q''}$ constant for any ancestor balls $Q''\in C^j(Q)$ for $j < n$. Let $Q'\in C^n(Q),\ Q''\in C(Q') \subseteq C^{n+1}(Q)$, and $Q_0 \in C^j(Q)$ for some $j < n$. We define $w_Q^{n+1}(x)$ by declaring $w_Q^{n+1}$ to be constant on each of $R_{Q_0},R_{Q'},$ and $U_{Q''}\cap\Sigma$ and imposing
\begin{align}\label{e:weight-past}
    w^{n+1}_Q(R_{Q_0}) &= w^n_Q(R_{Q_0}),\\\label{e:weight-remainder}
    w^{n+1}_Q(R_{Q'}) &= w^n_Q(U_{Q'})\cdot\frac{D\ell(R_{Q'})}{s_{Q'}},\\\label{e:weight-children}
    w^{n+1}_Q(U_{Q''}) &= w^n_Q(U_{Q'})\cdot\frac{\diam(U_{Q''})}{s_{Q'}}.
\end{align}
We could find the pointwise value for $w^{n+1}$ on each set by dividing the three above equations by $\ell(R_{Q_0}),\ell(R_{Q'}),$ and $\ell(U_{Q''}\cap\Sigma)$ respectively. It follows from the definition that $w_Q^n(U_Q)$ is $\mathcal{F}_n$ measurable. In order to show that $(w^n_Q)$ is a martingale adapted to the filtration $(\mathcal{F}_n)$, we must prove
\begin{equation*}
    \mathbb{E}(w_Q^{n+1}|\mathcal{F}_n) = w_Q^n.
\end{equation*}
It suffices to show that $w^{n+1}_Q(U_{Q'}) = w_Q^n(U_{Q'})$ for any $Q'\in \cup_{k\leq n}C^k(Q')$. First, suppose $Q'\in C^n(Q)$. Then, using \eqref{e:weight-children} and \eqref{e:weight-remainder},
\begin{align*}
    w_Q^{n+1}(U_{Q'}) &= w_Q^{n+1}(R_{Q'}) + \sum_{Q''\in C(Q')}w_Q^{n+1}(U_{Q'})\\ 
    &= w^n_Q(U_{Q'})\cdot\frac{D\ell(R_{Q'})}{s_{Q'}} + \sum_{Q''\in C(Q')}w^n_Q(U_{Q'})\cdot\frac{\diam(U_{Q''})}{s_{Q'}} = w_Q^n(U_{Q'}).
\end{align*}
On the other hand, if $Q'\in C^k(Q)$ for some $k < n$, we can apply \eqref{e:filt-decomp} to $Q'$ to write $U_{Q'}$ in terms of the remainders down to level $n-1$ and the cores at level $n$ inside of $U_{Q'}$:
\begin{align*}
    w^{n+1}_Q(U_{Q'}) &= \sum_{j = 0}^{n-k-1}\sum_{Q''\in C^{j}(Q')}w^{n+1}_Q(R_{Q'}) + \sum_{Q''\in C^{n-k}(Q')}w_Q^{n+1}(U_{Q'})\\ 
    &= \sum_{j = 0}^{n-k-1}\sum_{Q''\in C^k(Q')}w^{n}_Q(R_{Q'}) + \sum_{Q''\in C^{n-k}(Q')}w_Q^{n}(U_{Q'}) = w^n_Q(U_{Q'})
\end{align*}
using the previous two cases. This also shows that $w^{n+1}_Q(U_Q) = w_Q^n(U_Q) =\ldots = \diam(U_Q)$, verifying the finite expectation condition. Hence, $(w_Q^n)_{n\geq0}$ is a positive martingale so that it converges pointwise $\ell$ almost everywhere to a function
\begin{equation*}
    w_Q(x)=\lim_{n\rightarrow\infty}w_Q^n(x).
\end{equation*}
By definition, $\supp(w_Q) \subseteq \supp(w_Q^0) = U_Q\cap \Sigma$ and $\int_Q w_Q d\ell = w_Q(U_Q) = w_Q^0(U_Q) = \diam(U_Q)$ verifying properties \ref{i:weight-diam} and \ref{i:weight-support} above. We now prove \ref{i:weight-sum}. Fix $x\in U_Q\cap\Sigma$ for which $\lim_{n\rightarrow\infty} w_Q^n(x)$ exists and suppose that $x\in R_{Q_N}\cap U_{Q_N} \subset U_{Q_{N-1}} \subset\cdots\subset U_{Q_0} = U_Q$. Then \eqref{e:weight-children} and \eqref{e:martingale-decay} imply
\begin{align*}
    \frac{w_Q(U_{Q_N})}{\diam(U_{Q_N})} = \frac{w_Q(U_{Q_{N-1}})}{s_{Q_{N-1}}} = \frac{w_Q(U_{Q_{N-1}})}{\diam(U_{Q_{N-1}})}\frac{\diam(U_{Q_{N-1}})}{s_{Q_{N-1}}} < q\frac{w_Q(U_{Q_{N-1}})}{\diam(U_{Q_{N-1}})}.
\end{align*}
Applying this $N$ times, we get
\begin{equation*}
    \frac{w_Q(U_{Q_N})}{\diam(U_{Q_N})} < q^N\frac{w_Q(U_Q)}{\diam(U_Q)} = q^N.
\end{equation*}
Therefore, using \eqref{e:weight-remainder}, we conclude
\begin{equation*}
    w_Q(x) = \frac{w_Q(R_{Q_N})}{\ell(R_{Q_N})} \leq \frac{w_Q(U_{Q_N})}{\ell(R_{Q_N})}\frac{D\ell(R_{Q_N})}{s_{Q_N}} \leq D\frac{w_Q(U_{Q_N})}{s_{Q_N}} < Dq^N.
\end{equation*}
In particular, if $x$ is contained in an infinite sequence of nested cores, then $w_Q(x) = 0$ for all $Q$. Applying the above calculation for each $Q_k,\ 0\leq k \leq N$, we see that $w_{Q_k}(x) \leq Dq^{N-k}$. . Because $\supp(w_Q) = U_Q\cap\Sigma$, we also know that $\bigcup_{Q\in\scL}\supp(w_Q) \subseteq U_{\scL}$ and we can compute
\begin{equation*}
    \sum_{Q\in\scL}w_Q(x) = \sum_{Q\in\scL} w_{Q}(x)\chi_{U_Q}(x) \leq \left(\sum_{Q\in\scL}w_Q(x)\right)\chi_{U_{\scL}}(x) \leq \left(\sum_{n=0}^\infty Dq^n\right)\chi_{U_\scL}(x) \leq  \frac{D}{1-q}\chi_{U_\scL}(x).
\end{equation*}
This concludes the proof of \ref{i:weight-sum}.
\end{proof}
\end{subsubsection}
\begin{subsubsection}{\texorpdfstring{Bound on the $\Delta_1$ sum for $\Sigma$}{}}\label{subsec:sigma-delta1}
The ideas of this section are all present in \cite{Sc07}. We present them here in greater detail out of a desire for completeness. For $M\in\N$, define
\begin{equation*}
    \Delta(M) = \{Q\in\Delta_1:2^{-M}\leq\beta_{S(Q)}(U_Q^x) < 2^{-M+1}\}.
\end{equation*}
Fix $K\in\N$ such that $1\leq K \leq MJ$ and define
\begin{equation*}
    \Delta' \vcentcolon= \Delta'(M,K) \vcentcolon= \{Q\in\Delta(M):\rad(Q) = A2^{K + MJn},\ n\in\Z\}.
\end{equation*}
Intuitively, $\Delta^\prime$ is obtained by starting at an offset $K$ and skipping all elements in $\Delta(M)$ on the nearest $MJ$ scales so that the difference in scale between adjacent levels within $\Delta^\prime$ is large. We want to apply the martingale lemma, Lemma \ref{l:martingale}, with $\scL = \Delta'$, so we need to prove the following lemma:

\begin{lem}[cf. \cite{Sc07} Lemma 3.25]\label{l:delta1-cores}
For any $Q\in\Delta'$,
\begin{equation*}
    \frac{\diam(U_Q^{xx})}{\sum_{Q'\in C(Q)}\diam(U_{Q'}^{xx}) + \ell(R_Q)} \leq \frac{1}{1+\frac{1}{10}}.
\end{equation*}
\end{lem}
\begin{proof}
(See Figure \ref{fig:delta1-martingale} for a picture of this proof). Recall that the definition of $\Delta_1$ implies $\beta_{S(Q)}(U_Q^x) \geq C_U^{-1}\beta_{S(Q)}(Q) \geq C_U^{-1}\epsilon_1\beta_\Sigma(Q)$. For any $\eta\in S(Q)$, we get the bound 
\begin{equation}\label{e:delta1-betatilde}
    \tb{\eta}\leq\epsilon_2\beta_\Sigma(Q) < \epsilon_1(10^5AC_U)^{-1}\epsilon_1\beta_\Sigma(Q) \leq 10^{-5}A^{-1}\epsilon_1\beta_{S(Q)}(U_Q^x) < 10^{-5}A^{-1}\epsilon_12^{-M+1}.
\end{equation}
Therefore, because $\gamma_Q\in S(Q)$ we conclude $\beta_{\gamma_Q}(U_Q^{x}) \leq 16A \beta_{\gamma_Q}(Q) \leq 32A\tb{\gamma_Q} \leq 10^{-3}\epsilon_1\beta_{S(Q)}(U_Q^x)$. This implies that there exists $\xi_Q\in S(Q),\ \xi_Q\not=\gamma_Q$ such that we have $y\in\xi_Q\cap U_Q^x$ with $\dist(y,\Edge(\gamma_Q)) \geq \beta_{S(Q)}(U_Q^x)\diam(U_Q^x) \geq 2^{-M}\diam(U_Q^x)$ by using the line collinear with $\Edge(\gamma_Q)$ as an approximating line for $\beta_{S(Q)}(U_Q^x)$. Define 
\begin{align*}
    \gamma' \vcentcolon=\Edge(\gamma_Q),\quad &B_{\gamma'} \vcentcolon= B(\gamma',\epsilon_12^{-M}\diam(U_Q^x)),\\
    \xi'\vcentcolon=\Edge(\xi_Q)\cap15c_0Q,\quad        &B_{\xi'} \vcentcolon= B(\xi',\epsilon_12^{-M}\diam(U_Q^x)).
\end{align*}
By Lemma \ref{l:almost-flat-crossing}, there exists $y_{\xi'}\in\xi'$ such that $|y-y_{\xi'}| \leq \tb{\xi_Q}\diam(2Q) \leq \epsilon_12^{-M-1}\diam(U_Q^{x})$ so that $\dist(y_{\xi'},\gamma') \geq 2^{-M-1}\diam(U_Q^x)$, and hence $y_{\xi'}\in 9c_0Q$.
Write $\xi'$ as the union of two subsegments $\xi' = [a_{\xi'},y_{\xi'}]\cup [y_{\xi'},b_{\xi'}]$ where $a_{\xi'}$ and $b_{\xi'}$ are the endpoints of $\xi'$. Because the line segments $[a_{\xi'},y_{\xi'}]$ and $[b_{\xi'},y_{\xi'}]$ extend in opposite directions away from $y_{\xi'}$, one of them, suppose it is $[a_{\xi'},y_{\xi'}]$, satisfies $\dist([a_{\xi'},y_{\xi'}], \gamma') \geq \dist(y_{\xi'},\gamma') \geq 2^{-M-5}\diam(U_Q^{xx})$ also using the fact that $\gamma'$ is a line segment. In addition, $[a_{\xi'},y_{\xi'}]$ has nonempty intersection with both $9c_0Q$ and $15c_0Q\subseteq U_Q^{xx}$ so that we can assume both of the following hold:
\begin{align}
    \dist([a_{\xi'}, y_{\xi'}],\gamma') &\geq 2^{-M-5}\diam(U_Q^{xx}),\label{e:xi-seg-far}\\
    \diam([a_{\xi'},y_{\xi'}]) &\geq 6c_0\rad(Q) = 3c_0\diam(Q)\label{e:xi-seg-large}.
\end{align}
Therefore, we can apply Lemma $\ref{l:almost-flat-crossing}$ to the segment $[a_{\xi'},y_{\xi'}]$ to get an arc $\xi_0\subseteq \xi_Q$ such that $\xi_0\subseteq B_{\xi'} \subseteq 16c_0Q\subseteq U_Q^{xx}$ and $\Diam(\xi_0) \geq 3c_0\diam(Q)$. Now, $Q\in\Delta'$ implies, for all $Q'\in C(Q)$,
\begin{equation}\label{e:M-small-child}
    \diam(U_{Q'}^{xx})\leq \diam(Q') \leq 2^{-MJ}\diam(Q) \leq 2^{-M}\cdot2^{-J+2}\diam(Q) \leq 2^{-M}\epsilon_1\diam(U_Q^{xx})
\end{equation}
Using \eqref{e:xi-seg-far} and the fact that $\gamma_Q\subseteq B_{\gamma'}$, this implies that $\xi_0$ satisfies the following:
\begin{align*}
    U_{Q'}^{xx}\cap\gamma_Q &= \varnothing\text{ for all } Q'\in C(Q) \text{ such that } U_{Q'}^{xx}\cap\xi_0\not=\varnothing.
\end{align*}
Hence, we can estimate
\begin{align}\nonumber
    \sum_{Q'\in C(Q)}\diam(U_{Q'}^{xx}) + \ell(R_Q) &\geq \sum_{\substack{Q'\in C(Q) \\\nonumber U_{Q'}^{xx}\cap\gamma_Q\not=\varnothing}}\diam(U_{Q'}^{xx}) + \ell(R_Q\cap\gamma_Q) + \sum_{\substack{Q'\in C(Q) \\\nonumber U_{Q'}^{xx}\cap\xi_0\not=\varnothing}}\diam(U_{Q'}^{xx}) + \ell(R_Q\cap\xi_0)\\\nonumber
    &\geq \diam(\gamma_Q\cap U_Q^{xx}) + \Diam(\xi_0) \geq 15c_0\diam(Q) + 3c_0\diam(Q)\\
    &\geq \left(1+\frac{1}{10}\right)\diam(U_Q^{xx}).\label{e:delta1-diam-lb}
\qedhere\end{align}
\end{proof}
\begin{figure}[h]
    \centering
    \includegraphics[scale=0.9]{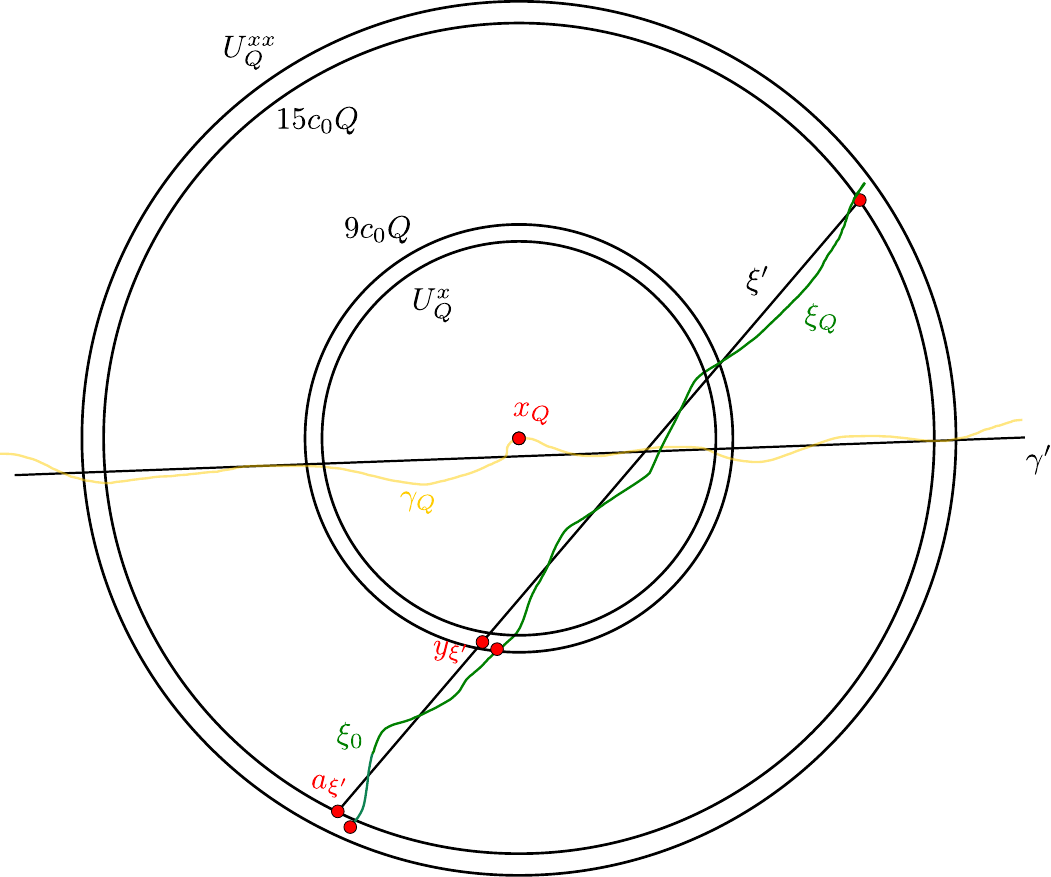}
    \caption{A picture of the proof of Lemma \ref{l:delta1-cores}.}
    \label{fig:delta1-martingale}
\end{figure}
\begin{prop}[cf. \cite{Sc07} Lemma 3.25]\label{p:delta1-sigma-bound}
\begin{align*}
    \sum_{Q\in\Delta_1}\beta_\Sigma(Q)^2\diam(Q) &\lesssim_{A,J} \sH^1(\Sigma).
\end{align*}
\end{prop}
\begin{proof}
Fix $\Delta^\prime(M,K)\subseteq\Delta_1$ as defined above and order it via the forest $\mathcal{T}_{\Delta'}^{16c_0}$. By Lemma \ref{l:delta1-cores}, we can apply Lemma \ref{l:martingale} with $\scL = \Delta',\ D = 1,\ q = \frac{1}{1+\frac{1}{10}}$ to get a collection of positive real-valued functions $\{w_Q\}_{Q\in\Delta'}$ such that
\begin{enumerate}[label=(\roman*)]
    \item $\int_Q w_Q\ d\ell = \diam(U_Q^{xx}),$ \text{and}
    \item $\sum_{Q\in\Delta'}w_Q(x) \lesssim \chi_{U^{16c_0}_{\Delta'}}(x)\text{ for almost every $x\in\Sigma$}$,
\end{enumerate}
Therefore, we have
\begin{align*}
    \sum_{Q\in\Delta^\prime}\beta_\Sigma(Q)\diam(Q) &\lesssim_A 2^{-M}\sum_{Q\in\Delta^\prime}\int_Q w_Q d\ell \leq 2^{-M}\int_\Gamma \sum_{Q\in\Delta^\prime}w_Q d\ell \\
    &\lesssim2^{-M}\int_{U^{16c_0}_{\Delta'}\cap\Sigma}d\ell \leq 2^{-M}\sum_{T\in\mathcal{T}_{\Delta'}^{16c_0}}\ell(U^{xx}_{Q(T)}\cap\Sigma) \lesssim 2^{-M}\ell(\Sigma)
\end{align*}
where the final inequality follows because the collection $\{U^{xx}_{Q(T)}\}_{T\in\mathcal{T}_{\Delta'}^{16c_0}}$ is pairwise disjoint. Summing this over $M\geq 0$ and $1 \leq K \leq MJ$, we get
\begin{equation*}
    \sum_{Q\in\Delta_1}\beta_\Sigma(Q)^2\diam(Q) \leq \sum_{M=0}^\infty \sum_{K=1}^{MJ}\sum_{Q\in\Delta^\prime(M,K)}\beta_\Sigma(Q)\diam(Q) \lesssim_{A,J} \sum_{M=0}^\infty M2^{-M}\ell(\Sigma) \lesssim \sH^1(\Sigma).\qedhere
\end{equation*}
\end{proof}
\end{subsubsection}
\begin{subsubsection}{\texorpdfstring{Bound on the $\Delta_{2.1}$ sum for $\Sigma$}{}}\label{subsec:sigma-delta2.1}
Our proof of the beta sum bound for $\Delta_{2.1}$ relies heavily on the construction of weights analogous to those of Proposition \ref{p:delta1-sigma-bound} adapted to $\Delta_{2.1}$ balls rather than $\Delta_1$ balls. Unfortunately, the proof of the existence of these weights in \cite{Sc07} Lemma 3.28 contains technical errors which leave gaps in the proof (see \cite{BM22b} Appendix C. for further explanation of the issues). In this section, we provide a new proof.

We begin with a general lemma which gives a nice approximating line segment for almost flat $\gamma_Q$ inside the core of a general ball $Q\in\scG$ for a range of core sizes. We will use this line segment $\gamma'$ as an accounting tool for proving the analogue of Lemma \ref{l:delta1-cores} for $\Delta_{2.1}$ balls.

\begin{lem}\label{l:gamma'}
Let $Q\in\scG$ such that $\gamma_Q\in S(Q)$ and fix $\frac{2\epsilon_2}{\epsilon_1} < c < \frac{1}{4A}$. There exists a line segment $\gamma' = [a_{\gamma'},b_{\gamma'}] \subseteq \Edge(\gamma_Q)$ such that
\begin{enumerate}[label=(\roman*)]
    \item $\sH^1(\gamma') \geq (1-30\epsilon_1)\diam(U^c_Q)$, \label{i:gamma'-diam}
    \item $B(\gamma',5\epsilon_1\diam(U^c_Q))\subseteq U^c_Q$, \label{i:gamma'-nbhd-containment}
    \item $\gamma' \subseteq \pi_{\gamma'}(\gamma_Q\cap cQ)$, \text{and} \label{i:gamma'-proj}
    \item If $\beta_{\Sigma}(U_Q^c) < \epsilon_1$, then for any $Q'\in C(Q),\ \pi_{\gamma'}(U^c_{Q'}) \cap \gamma' \not=\varnothing \implies 2Q'\subseteq U^c_Q$. \label{i:gamma'-covered-nearby}
\end{enumerate}
\end{lem}
\begin{proof}
Let $\gamma'' \vcentcolon= [a_{\gamma''},b_{\gamma''}] \vcentcolon= \Edge(\gamma_Q)\cap cQ$. Define $\gamma'$ to be the line segment gotten by chopping off the segments of length $10\epsilon_1\diam(U^c_Q)$ from either end of $\gamma''$:
\begin{equation*}
    \gamma' \vcentcolon= \left[ a_{\gamma''} - 10\epsilon_1\diam(U^c_Q)\frac{b_{\gamma''} - a_{\gamma''}}{|b_{\gamma''} - a_{\gamma''}|}, b_{\gamma''} - 10\epsilon_1\diam(U^c_Q)\frac{a_{\gamma''} - b_{\gamma''}}{|b_{\gamma''} - a_{\gamma''}|} \right]=\vcentcolon [a_{\gamma'},b_{\gamma'}].
\end{equation*}
Because $x_Q\in\gamma_Q\in S(Q)$ implies $\dist(x_Q,\gamma'') < 2\epsilon_2\diam(Q) \leq \epsilon_1c\rad(Q)$, and $a_{\gamma''},b_{\gamma''} \in \partial(cQ)$, Lemma \ref{l:diam-low-bound} implies
\begin{align*}
    \sH^1(\gamma'') &= |a_{\gamma''} - b_{\gamma''}| \geq c\diam(Q)\left(1 - 2\epsilon_1^2\right) \geq (1-2\epsilon_1)\diam(U^c_Q)
\end{align*}
where we used the fact that $c\diam(Q) \geq \frac{1}{1+2^{-J+2}}\diam(U^c_Q) \geq (1-\epsilon_1)\diam(U^c_Q)$. This gives
\begin{align*}
    \sH^1(\gamma') &\geq \sH^1(\gamma'') - 20\epsilon_1\diam(U^c_Q) \geq (1-30\epsilon_1)\diam(U^c_Q).
\end{align*}
This proves \ref{i:gamma'-diam}. In a similar vein, we can use the Pythagorean theorem to estimate
\begin{align*}
    |a_{\gamma'}-x_Q| &\leq \sqrt{(1-10\epsilon_1)^2\diam(U^c_Q)^2 + \epsilon_1^2\diam(U^c_Q)^2} \leq \diam(U^c_Q)\sqrt{1-20\epsilon_1 + 101\epsilon_1^2}\\
    &\leq \diam(U^c_Q)\sqrt{1-19\epsilon_1} \leq (1-9\epsilon_1)\diam(U^c_Q)
\end{align*}
using the Taylor expansion estimate $\sqrt{1-x} = 1 - \frac{x}{2} - \frac{x^2}{8} - \ldots \leq 1-\frac{x}{2}$. A similar inequality holds for $b_{\gamma'}$, implying
\begin{equation}\label{e:xiQ-nbhd}
    B(\gamma',5\epsilon_1\diam(U^c_Q))\subseteq cQ \subseteq U^c_Q
\end{equation}
by the triangle inequality, the convexity of balls in $\ell_2$, and the fact that $\gamma'$ is a line segment. This proves \ref{i:gamma'-nbhd-containment}. To prove \ref{i:gamma'-proj}, we observe that $\tb{\gamma_Q}\diam(2Q) \leq 2\epsilon_2\diam(Q) < c\epsilon_1\diam(Q) \leq \epsilon_1\diam(U^c_Q)$ and apply Lemma \ref{l:almost-flat-crossing} to the segment $\gamma'$ to get a subarc $\gamma_0\subseteq\gamma_Q$ such that $\Domain(\gamma_0)\subseteq\Domain(\gamma_Q)\cap \gamma^{-1}(C(\gamma',\epsilon_1\diam(U^c_Q)))$ such that $\pi_{\gamma'}(\gamma_0) = \gamma'$. We now prove \ref{i:gamma'-covered-nearby}. We compute
\begin{equation}\label{e:child-diam}
    \dist(\pi_{\gamma'}(x_{Q'}),\gamma') \leq \diam(2Q') \leq 2^{-J+1}\diam(Q) < 2(10A)^{-2}\epsilon_1\diam(Q) \leq \epsilon_1\diam(U^c_Q).
\end{equation}
On the other hand, $\beta_{\Sigma}(U_Q^c) < \epsilon_1$ implies $|\pi_{\gamma'}^\perp(x_{Q'})| \leq 2\epsilon_1\diam(U_Q^c)$ which combined with \eqref{e:child-diam} gives $2Q'\subseteq B(\gamma',5\epsilon_1\diam(U_Q^c)) \subseteq U_Q^c$ by \ref{i:gamma'-nbhd-containment}.
\end{proof}
For the rest of this section, we consider $\Delta_{2.1}$ with the ordering given by the forest $\mathcal{T}_{\Delta_{2.1}}^{c_0}$. We now identify a good family of balls from which we will extract excess length in order to prove the existence of $q < 1$ such that $\diam(U_Q)\leq qs_Q$ for $Q\in\Delta_{2.1}$. 
\begin{defn}[Dominant balls]
Fix $Q\in\Delta_{2.1}$ and let $\gamma'$ be as in Lemma \ref{l:gamma'}. Define the ``interior'' and ``exterior'' children as 
\begin{align*}
    C_I(Q) &\vcentcolon= \{Q'\in C(Q) : \pi_{\gamma'}(U_{Q'})\cap\gamma'\not=\varnothing\},\\
    C_E(Q) &\vcentcolon= \{Q'\in C(Q) : \pi_{\gamma'}(U_{Q'})\cap\gamma'=\varnothing\},\\
    &= C(Q) \setminus C_I(Q).
\end{align*}
We have $\bigcup_{Q'\in C_I(Q)}2Q'\subseteq U_Q$ by Lemma \ref{l:gamma'} \ref{i:gamma'-covered-nearby}. For any $Q'\in C_I(Q)$, one of the following two properties holds:
\begin{enumerate}[label=(\roman*)]
    \item For all $Q''\in C(Q)$ such that $U_{Q''}\cap 2Q'\not= \varnothing$, $\diam(Q'')\leq\diam(Q')$, \text{ or}\label{i:dom}
    \item There exists $Q''\in C(Q)$ such that $U_{Q''}\cap 2Q'\not= \varnothing$ and $\diam(Q'') > \diam(Q')$ \label{i:minor}
\end{enumerate}
Define the ``dominant'' and ``minor'' balls as
\begin{align*}
    C_D(Q) &\vcentcolon= \{Q'\in C_I(Q) : \text{$Q'$ satisfies \ref{i:dom}}\},\\
    C_M(Q) &\vcentcolon= \{Q'\in C_I(Q) : \text{$Q'$ satisfies \ref{i:minor}}\}\\
    &= C_I(Q) \setminus C_D(Q).
\end{align*}
\end{defn}
The balls in $C_D(Q)$ have dominant projections on $\gamma'$ in the sense of the following lemma:
\begin{lem}\label{l:big-proj}
Let $Q\in\Delta_{2.1}$ be such that $\ell(R_Q) < \epsilon_1\diam(U_Q)$. Then
\begin{equation*}
    \sH^1\left(\bigcup_{Q'\in C_D(Q)}\pi_{\gamma'}(U_{Q'}) \right)  \geq (1-50\epsilon_1)\diam(U_Q).
\end{equation*}
\end{lem}
\begin{proof}
The collection $\{\pi_{\gamma'}(U_{Q'})\}_{Q'\in C_I(Q)}\cup\{\pi_{\gamma'}(R_Q\cap\gamma_Q)\}$ is a covering of $\gamma'$ by Lemma \ref{l:gamma'} \ref{i:gamma'-proj} and \ref{i:gamma'-covered-nearby}. Hence,
\begin{align*}
\sH^1\left( \bigcup_{Q'\in C_I(Q)}\pi_{\gamma'}(U_{Q'})\right) + \ell(R_Q\cap\gamma_Q)&\geq \sH^1\left( \bigcup_{Q'\in C_I(Q)}\pi_{\gamma'}(U_{Q'}) \right) + \sH^1(\pi_{\gamma'}(R_Q\cap\gamma_Q))\\
&\geq \sH^1(\gamma') \geq (1-30\epsilon_1)\diam(U_Q)
\end{align*}
by Lemma \ref{l:gamma'} \ref{i:gamma'-diam}. Using the fact that $\ell(R_Q) < \epsilon_1\diam(U_Q)$, we get $\sH^1\left( \bigcup_{Q'\in C_I(Q)}\pi_{\gamma'}(U_{Q'}) \right) \geq (1-31\epsilon_1)\diam(U_Q)$. The rest of the proof amounts to showing that the projections of cores of balls in the subfamily $C_D(Q)\subseteq C_I(Q)$ cover most of the projected cores of balls in $C_I(Q)$.

We begin by defining a many-to-one mapping $\psi:C_M(Q)\rightarrow C(Q)$. Fix $Q_0\in C_M(Q)$. By definition, there exists some $Q_1\in C(Q)$ such that $U_{Q_1}\cap 2Q_0\not=\varnothing$ and $\diam(Q_1) > \diam(Q_0)$. If $Q_1\in C_D(Q)\cup C_E(Q)$, then define $\psi(Q_0) = Q_1$. Otherwise, $Q_1\in C_M(Q)$ and, applying the same logic to $Q_1$ as we did to $Q_0$, we get the existence of $Q_2\in C(Q)$  satisfying condition (ii) for $Q_1$. Repeating this argument recursively, we get a finite chain of balls $Q_0,Q_1,Q_2,\ldots, Q_N$ with strictly increasing diameter such that $Q_N\in C_D(Q)\cup C_E(Q)$ and $Q_0,\ldots,Q_{N-1}\in C_M(Q)$ with $2Q_i\cap U_{Q_{i+1}}\not=\varnothing$ (the chain must be finite because there is an absolute upper bound on the diameter for balls in $C(Q)$). Set $\psi(Q_0) = \psi(Q_1) = \cdots = \psi(Q_{N-1}) = Q_N$.

Now, let $x\in Q_0\in C_M(Q)$ with the above described chain $Q_0,\ldots,Q_{N-1},\psi(Q_0)$. By the triangle inequality, we get
\begin{align*}
    \dist(x,U_{\psi(Q_0)}) &\leq \sum_{i=0}^{N-1}\diam(2Q_i) \leq 2^{-J+1}\sum_{i=0}^{N-1}(2^{-J})^i\diam(\psi(Q_0)) \leq \frac{2^{-J+1}}{1-2^{-J}}\diam(\psi(Q_0))\\
    &< \epsilon_1\diam(U_{\psi(Q_0)}).
\end{align*}
This implies that for any $Q'\in C_D(Q)\cup C_E(Q)$, the set of balls $Q''\in C_M(Q)$ such that $\psi(Q'') = Q'$ are contained in a neighborhood of radius $\epsilon_1\diam(U_{Q'})$ around $U_{Q'}$. Therefore, the projection $\pi_{\gamma'}(U_{Q'})$ is an interval of length at least $c_0\diam(Q') \geq (1-\epsilon_1)\diam(U_{Q'})$ while the set $\left(\bigcup_{\psi(Q'') = Q'}\pi_{\gamma'}(U_{Q''})\right) \setminus \pi_{\gamma'}(U_{Q'})$ is contained in the union of two intervals of length $\epsilon_1\diam(U_{Q'})$ adjoined to either end of $\pi_{\gamma'}(U_{Q'})$. This means that if $Q'\in C_M(Q)$ is such that $\psi(Q')\in C_E(Q)$, then $\pi_{\gamma'}(U_{Q''})\cap\gamma'$ is contained in an interval of width less than $\epsilon_1\diam(U_{Q'}) \leq \epsilon_1\diam(U_Q)$ containing one of the two endpoints of $\gamma'$. Hence,
\begin{equation*}
    \sH^1\left( \bigcup_{\substack{Q'\in C_M(Q) \\ \psi(Q')\in C_E(Q)}}\pi_{\gamma'}(U_{Q'}) \right) \leq 2\epsilon_1\diam(U_{Q}).
\end{equation*}
This implies
\begin{align*}
    \sH^1&\left(\bigcup_{Q'\in C_M(Q)}\pi_{\gamma'}(U_{Q'}) \setminus \bigcup_{Q'\in C_D(Q)}\pi_{\gamma'}(U_{Q'}) \right)\\
    &\quad\quad\leq\sH^1\left(\bigcup_{\substack{Q'\in C_M(Q) \\ \psi(Q')\in C_D(Q)}}\pi_{\gamma'}(U_{Q'}) \setminus \bigcup_{Q'\in C_D(Q)}\pi_{\gamma'}(U_{Q'}) \right) + \sH^1\left( \bigcup_{\substack{Q'\in C_M(Q) \\ \psi(Q')\in C_E(Q)}}\pi_{\gamma'}(U_{Q''}) \right)  \\
    &\quad\quad\leq 4\epsilon_1 \sH^1\left(\bigcup_{Q'\in C_D(Q)}\pi_{\gamma'}(U_{Q'}) \right) + 2\epsilon_1\diam(U_Q).
\end{align*}
Therefore, we conclude
\begin{align*}
    (1-31\epsilon_1)\diam(U_Q) &\leq \sH^1\left(\bigcup_{Q'\in C_I(Q)}\pi_{\gamma'}(U_{Q'})\right) \\ 
    &\leq\sH^1\left( \bigcup_{Q'\in C_D(Q)}\pi_{\gamma'}(U_{Q'})  \right) + \sH^1\left( \bigcup_{Q'\in C_M(Q)}\pi_{\gamma'}(U_{Q'}) \setminus \bigcup_{Q'\in C_D(Q)}\pi_{\gamma'}(U_{Q'}) \right)\\
    &\leq (1+4\epsilon_1)\sH^1\left(\bigcup_{Q'\in C_D(Q)}\pi_{\gamma'}(U_{Q'})\right) + 2\epsilon_1\diam(U_Q)
\end{align*}
from which we get the result.
\end{proof}
We now want to show that each ball $Q'\in C(Q)$ has double $2Q'$ which contains a significant amount of excess length which contributes to the value of $s_Q$. We begin by isolating an almost flat arc $\tau_{Q'}$ of large diameter which does not overlap with $\gamma_Q$ too badly.
\begin{remark}[Existence of $\tau_{Q'}$]\label{rem:tau-q}
Fix $Q'\in\Delta_{2.1}$. Because $\beta_{S_{Q'}}(U_{Q'}^x) < C_U^{-1}\beta_{S_{Q'}}(Q')$, there must exist an arc $\tau_{Q'}\in S_{Q'}$ such that both
\begin{enumerate}[label=(\roman*)]
    \item $\tau_{Q'}\cap U_{Q'}^x = \varnothing$, \text{and}\label{i:tau-far}
    \item $\beta_{\gamma_{Q'}\cup\tau_{Q'}}(2{Q'}) \gtrsim_A 1$. \label{i:tau-beta}
\end{enumerate}
Intuitively, we think of $\tau_{Q'}$ as an ``additional'' arc alongside $\gamma_{Q'}$ which makes a significant contribution to $\sum_{Q''\in C(Q)}\diam(U_{Q''}) + \ell(R_Q)$ inside $2Q'$ because it carries a large number of child cores disjoint from those on $\gamma_Q$.
\end{remark}
In order to estimate the core diameter sum, we will use line segment approximations to $\tau_{Q'}$ and $\gamma_{Q'}$ with the idea of first isolating appropriate subsegments which are far apart, then applying Lemma \ref{l:almost-flat-crossing} to get associated arcs which are far apart. We define $\tau'\vcentcolon=\Edge(\tau_{Q'})\cap (1-3c_0)2Q'$. Then $\tau'$ is a line segment with endpoints in the boundary of $(1-3c_0)2Q'$ such that $\tau'\cap (1+c_0)Q'\not=\varnothing$ because $\tb{\tau_{Q'}} \leq \epsilon_2$ and $\tau\cap Q\not=\varnothing$ because $\tau\in\Lambda(Q)$. Similarly, we define $\eta'\vcentcolon=\Edge(\gamma_{Q'})\cap(1-3c_0)2Q'$. Because $x_{Q'}\in\gamma_{Q'}$ and $\gamma_{Q'}\in S_{Q'}$, we have $\dist(\eta',x_{Q'}) \leq \tb{\gamma_Q}\diam(2Q') \leq \epsilon_2\diam(2Q') = \frac{\epsilon_2}{(1-3c_0)}\cdot(1-3c_0)\diam(2Q')$ so that we can apply Lemma \ref{l:diam-low-bound} and receive 
\begin{align}\label{e:gamma-diam}\nonumber
    \diam(\eta') &\geq \left(1 - 2\left(\frac{\epsilon_2}{1-3c_0}\right)^2\right)(1-3c_0)\diam(2Q') \geq (1-8\epsilon_2^2)(1-3c_0)\diam(2Q') \geq \frac{999}{1000}\diam(2Q')
\end{align}
because $(1-8\epsilon_2^2)(1-3c_0) \geq (1-4c_0) \geq (1 - 4\cdot10^{-4}) \geq \frac{999}{1000}$. We will use $\eta'$ and $\tau'$ in the following lemma:
\begin{lem}\label{l:delta2.1-excess}
Let $Q\in\Delta_{2.1}$. For any $Q'\in C_D(Q)$,
\begin{equation*}
    \sum_{\substack{Q''\in C(Q) \\ U_{Q''}\subseteq 2Q' \\ U_{Q''}\cap(\gamma_{Q'}\cup\tau_{Q'})\not=\varnothing}}\diam(U_{Q''}) + \ell(R_Q\cap 2Q') \geq \left(1+\frac{1}{10}\right)\diam(2Q').
\end{equation*}
\end{lem}
\begin{proof}
Our plan is to apply Lemma \ref{l:almost-flat-crossing} to $\eta'$ and a large diameter subsegment $\tau''\subseteq\tau'$ which is far from $\eta'$ to get arcs $\gamma_0\subseteq\gamma_{Q'}$ and $\tau_0\subseteq\tau_{Q'}$ such that no child core of $Q$ touches both (See Figure \ref{fig:delta2.1-martingale-global} for a picture of the proof). Because $\tb{\gamma_{Q'}}\diam(2Q') \leq \epsilon_2\diam(2Q') \leq \epsilon_1\diam(U_{Q'})$, we can apply Lemma \ref{l:almost-flat-crossing} to the segment $\eta'$ to get an arc $\gamma_0\subseteq\gamma_{Q'}$ such that $\Domain(\gamma_0)\subseteq \Domain(\gamma_{Q'})\cap\gamma^{-1}(C(\eta',\epsilon_1\diam(U_{Q'})))$ with $\Diam(\gamma_0) \geq \diam(\eta') \geq \frac{999}{1000}\diam(Q)$. We claim that for any $Q''\in C(Q),$ 
\begin{equation}\label{e:core-contained-2Q}
    U_{Q''}\cap\gamma_0\not=\varnothing \implies U_{Q''}\subseteq 2Q'.
\end{equation}
For proof, first note that because $Q'\in C_D(Q)$, $U_{Q''}\cap 2Q'\not=\varnothing$ implies $\diam(Q'')\leq\diam(Q')$ so that
\begin{equation}\label{e:dominant-diam}
    \diam(U_{Q''}) \leq (1+2^{-J+2})c_0\diam(Q') \leq (1+\epsilon_1)c_0\diam(Q') = (1+\epsilon_1)c_0\rad(2Q').
\end{equation}
Hence, if $Q''\in C(Q)$ such that $U_{Q''}\cap\gamma_0\not=\varnothing$, then $U_{Q''}\cap(1-\frac{3}{2}c_0)2Q'\not=\varnothing$ so that any $x\in U_{Q''}$ satisfies
\begin{align*}
    \dist(x,x_Q) &\leq \left(1-\frac{3}{2}c_0\right)\rad(2Q') + \diam(U_Q'') \leq \left(1-\frac{3}{2}c_0\right)\rad(2Q') + (1+\epsilon_1)c_0\rad(2Q') \leq \rad(2Q')
\end{align*}
so that $U_{Q''}\subseteq 2Q'$. Because $Q'\in C_D(Q)$, we also know that $\gamma_0\subseteq \gamma_{Q'}\subseteq 2Q'\subseteq U_Q$ so that the family $\{U_{Q''} : Q''\in C(Q),\ U_{Q''}\cap\gamma_0\not=\varnothing\}\cup\{R_Q\cap\gamma_0\}$ is a covering of $\gamma_0$. We can then estimate
\begin{align*}
    \sum_{\substack{Q''\in C(Q) \\ U_{Q''}\cap\gamma_{Q'}\not=\varnothing}}\diam(U_{Q''}) + \ell(R_Q\cap \gamma_{Q'}) &\geq \sum_{\substack{Q''\in C(Q) \\ U_{Q''}\subseteq 2Q' \\ U_{Q''}\cap\gamma_0\not=\varnothing}}\diam(U_{Q''}) + \ell(R_Q\cap \gamma_0)\\
    &\geq\Diam(\gamma_0) \geq \frac{999}{1000}\diam(2Q').
\end{align*}
We want to apply a similar argument to $\tau'$, this time finding an arc $\tau_0$ which lies close to a subsegment $\tau''$ of $\tau'$ which is far from $\eta'$, hence from $\gamma_Q$. Indeed, suppose first that there exists a point $y_{\tau'}\in\tau'\cap(1+c_0)Q$ such that $\dist(y_{\tau'},\eta') \geq 7c_0\rad(Q')$. Write $\tau'$ as the union of two subsegments $\tau' = [a_{\tau'},y_{\tau'}]\cup[y_{\tau'},b_{\tau'}]$ where $a_{\tau'},b_{\tau'}$ are the endpoints of $\tau'$. Because the line segments $[a_{\tau'},y_{\tau'}]$ and $[y_{\tau'},b_{\tau'}]$ extend in opposite directions away from $y_{\tau'}$, we know that one of them, suppose it is $[a_{\tau'},y_{\tau'}]$, satisfies $\dist([a_{\tau'},y_{\tau'}],\eta') \geq \dist(y_{\tau'},\eta') \geq 7c_0\rad(Q)$ using the fact that $\eta'$ is a line segment. Set $\tau''\vcentcolon= [a_{\tau'},y_{\tau'}]$. This completes the definition of $\tau''$ in the first case.

If instead there is no such point $y_{\tau'}\in (1+c_0)Q'\cap\tau'$, then $(1+c_0)Q'\cap\tau'\subseteq B(\eta',7c_0\rad(Q'))$. We claim that $\tau'$ is nearly perpendicular to $\eta'$. Indeed, consider $E \vcentcolon= \partial((1+c_0)Q')\cap\tau'$ and let $C_1,C_2$ be the two connected components of the set $B(\eta',7c_0\rad(Q'))\cap\partial((1+c_0)Q')$. First, we claim there cannot exist distinct points $e_1,e_2\in E$ such that $e_1\in C_1$ and $e_2\in C_2$. If there did exist such points, then because $\tau'$ is a line segment and $B(\eta',7c_0\rad(Q'))$ is convex, there would exist $e'\in \tau'\cap B(\eta',7c_0\rad(Q'))$ with $\pi_{\eta'}(e) = \pi_{\eta'}(x_{Q'})$ so that $e'\in \frac{15}{2}c_0Q'$. Hence, we would have $\tau_{Q'}\cap U_{Q'}^x\not=\varnothing$, contradicting the definition of $\tau_{Q'}$. Therefore, without loss of generality we can assume that $E\subseteq C_1$.

Let $P\subseteq H$ be the affine plane containing the line segments $\eta'$ and $\tau'$ (this is at most $3$-dimensional). By translating and rotating, we can assume without loss of generality that $x_{Q'} = 0$ and $\eta'$ is collinear with the $x_1$-axis so that 
\begin{equation*}
    E\subseteq S \vcentcolon= \left\{x\in P : x_1 >0,\ |x| = (1+c_0)\rad(Q'),\ |x^\perp| \leq \frac{1}{1000}\rad(Q')\right\} 
\end{equation*}
where $|x^\perp|^2 = |x|^2 - |x_1|^2$, and we have used the fact that $8c_0 < \frac{1}{1000}$. The set $S$ is a small spherical cap of the (at most $2$-dimensional) sphere $\{x\in P : |x| = (1+c_0)\rad(Q')\}$ around the point $((1+c_0)\rad(Q'),0,0,\ldots)$. Fix $e_{\tau'}\in E$. We can write $[a_{\tau'},e_{\tau'}] = \{e_{\tau'} + tv : 0\leq t \leq |a_{\tau'}-e_{\tau'}|\}$ where $|v| = 1$ and we claim $v$ is parallel to a tangent vector to $S$. Indeed, if $\#(E) = 1$, then $\tau'$ is tangent to $S$ while if $\#(E) = 2$, then $\tau'\cap (1+c_0)Q'$ is a line segment with two endpoints in $S$ and the claim follows from considering $S$ as a graph over the plane $\{x_1 = 0\}$ and applying the mean value theorem (geometrically, one can imagine translating the line segment to be tangent to $S$). 

One can compute by implicit differentiation in $S\subseteq P \subseteq \R^3$ that $\frac{|v^\perp|}{|v|} \geq \frac{1}{2}.$ We can also assume $\dist(e_{\tau'} + tv,\eta')$ is increasing in $t$ by exchanging $[a_{\tau'},e_{\tau'}]$ with $[e_{\tau'},b_{\tau'}]$ or $v$ with $-v$ if necessary. Therefore, $\dist(e_{\tau'} + (20c_0\rad(Q'))v, \eta') \geq 7c_0\rad(Q')$. Define $\tau'' \vcentcolon= [a_{\tau'},e_{\tau'} + (20c_0\rad(Q'))v]$.

With $\tau''$ defined as in either of the two cases above, we get the following two lower bounds:
\begin{equation}\label{e:tau''-far-eta'}
    \dist(\tau'',\eta') \geq 7c_0\rad(Q'),
\end{equation}
\begin{align*}
    \diam(\tau'') &\geq 2(1-3c_0)\rad(Q') - (1+c_0)\rad(Q') - 20c_0\rad(Q') \\
    &\geq \frac{1}{4}\left(1 - 27c_0\right)\diam(2Q') \geq \frac{1}{5}\diam(2Q').
\end{align*}
Applying Lemma \ref{l:almost-flat-crossing} to the segment $\tau''$, we get an arc $\tau_0\subseteq C(\tau'',\epsilon_1c_0\rad(Q'))$ with $\Diam(\tau_0) \geq \frac{1}{5}\diam(2Q')$. Therefore, we conclude from \eqref{e:tau''-far-eta'} and \eqref{e:dominant-diam} that $U_{Q''}\cap\tau_0\not=\varnothing$ implies $U_{Q''}\cap\gamma_0=\varnothing$ and $U_{Q''}\subseteq 2Q'$ as in \eqref{e:core-contained-2Q} so that we can estimate
\begin{align*}
    \sum_{\substack{Q''\in C(Q) \\ U_{Q''}\cap\gamma_{Q'} = \varnothing \\ U_{Q''}\cap\tau_{Q'}\not=\varnothing}}\diam(U_{Q''}) + \ell(R_Q\cap \tau_{Q'}) &\geq \sum_{\substack{Q''\in C(Q) \\ U_{Q''}\subseteq 2Q' \\ U_{Q''}\cap\tau_0\not=\varnothing}}\diam(U_{Q''}) + \ell(R_Q\cap \tau_0)\\
    &\geq \Diam(\tau_0) \geq \frac{1}{5}\diam(2Q').
\end{align*}
By summing the estimates for $\gamma_0$ and $\tau_0$, we conclude
\begin{equation*}
    \sum_{\substack{Q''\in C(Q) \\ U_{Q''}\subseteq 2Q' \\ U_{Q''}\cap(\gamma_{Q'}\cup\tau_{Q'})\not=\varnothing}}\diam(U_{Q''}) + \ell(R_Q\cap 2Q') \geq \left(\frac{999}{1000} + \frac{1}{5}\right)\diam(2Q') \geq \left(1+\frac{1}{10}\right)\diam(2Q').
\qedhere\end{equation*}
\begin{figure}
    \centering
    \includegraphics[scale=0.7]{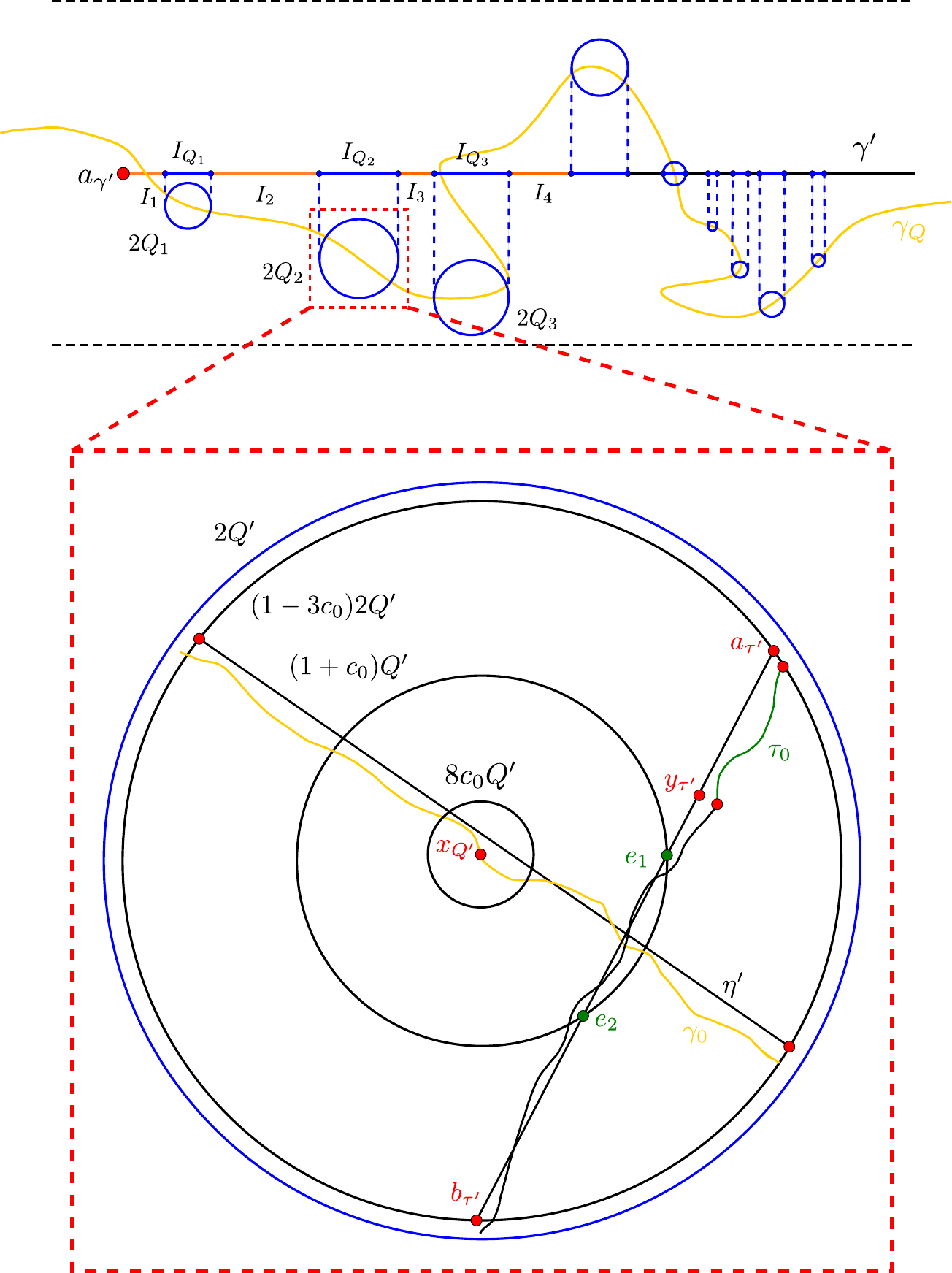}
    \caption{A picture of the proofs of Lemmas \ref{l:delta2.1-cores} and \ref{l:delta2.1-excess}. At the top is a picture of $\gamma_Q\cap U_Q$ in Lemma \ref{l:delta2.1-cores} on which lies members of the large family of disjoint dominant balls. In the image below, we have zoomed-in on one of these balls and have labeled pieces present in the proof of Lemma \ref{l:delta2.1-excess}.}
    \label{fig:delta2.1-martingale-global}
\end{figure}

\end{proof}
We can now combine this lemma with Lemma \ref{l:big-proj} on dominant projections to prove that the martingale construction can be applied to $\Delta_{2.1}$.
\begin{lem}[cf. \cite{Sc07} Lemma 3.28]\label{l:delta2.1-cores}
For any $Q\in\Delta_{2.1}$,
\begin{equation*}
    \frac{\diam(U_Q)}{\sum_{Q'\in C(Q)}\diam(U_{Q'}) + 2\epsilon_1^{-1}\ell(R_Q)} \leq \frac{1}{1+\frac{1}{50}}.
\end{equation*}
\end{lem}
\begin{proof}
First, observe that if $\ell(R_Q) > \epsilon_1\diam(U_Q)$, then
\begin{equation*}
    \frac{\diam(U_Q)}{s_Q} = \frac{\diam(U_Q)}{\sum_{Q'\in C(Q)}\diam(U_{Q'}) + 2\epsilon_1^{-1}\ell(R_Q)} < \frac{\diam(U_Q)}{2\diam(U_Q)} = \frac{1}{2} < 1.
\end{equation*}
Therefore, we can assume without loss of generality that $\ell(R_Q) \leq \epsilon_1\diam(U_Q)$. For $Q'\in C_D(Q)$, define $I_{Q'} :=\pi_{\gamma'}(2Q')$. Identifying $\gamma'$ with $\R$, we can apply a covering lemma for the real line (see \cite{Al91} Lemma 2.1, for example) to the collection $\{I_{Q'}\}_{Q'\in C_D(Q)}$ to get a collection $\scQ\subseteq C_D(Q)$ of balls with pairwise disjoint doubles so that
\begin{align}\label{e:covering-big-proj}
    \sH^1\left(\bigcup_{Q'\in\scQ}I_{Q'}\right) \geq \frac{1}{3}\sH^1\left( \bigcup_{Q'\in C_D(Q)}I_{Q'} \right) \geq \frac{1}{3}(1 - 50\epsilon_1)\diam(U_Q) \geq \frac{1}{4}\diam(U_Q)
\end{align}
where we used Lemma \ref{l:big-proj} in the penultimate inequality. We can then enumerate the components of $\gamma'\setminus\bigcup_{Q'\in\scQ}I_{Q'}$ as
\begin{equation*}
    \gamma'\setminus\bigcup_{Q'\in\scQ}I_{Q'} =: \bigcup_{j\in J_{\scQ}}I_j.
\end{equation*}
Define $\scQ_c \vcentcolon= \{Q'\in C(Q) : U_{Q'}\setminus \bigcup_{Q''\in\scQ}2Q''\not=\varnothing\}$. We have
\begin{equation*}
    \bigcup_{j\in J_\scQ}I_j \subseteq \pi_{\gamma'}\left(R_Q\setminus\bigcup_{Q'\in\scQ}2Q'\right)\cup\bigcup_{Q'\in\scQ_c}\pi_{\gamma'}(U_{Q'})
\end{equation*}
using Lemma \ref{l:gamma'} \ref{i:gamma'-proj}. Therefore, combining this fact with Lemma \ref{l:delta2.1-excess},
\begin{align*}
    \sum_{Q'\in C(Q)}&\diam(U_{Q'}) + 2\epsilon_1^{-1}\ell(R_Q) \\
    &\geq  \sum_{Q'\in\scQ}\sum_{\substack{Q''\in C(Q) \\ U_{Q''}\subseteq 2Q'}}\diam(U_{Q''}) + \ell(R_Q\cap 2Q') + \sum_{Q'\in\scQ_c}\diam(U_{Q'}) + \ell\left(R_Q\setminus \bigcup_{Q'\in\scQ}2Q'\right)\\
    &\geq \sum_{Q'\in\scQ}\left(1 + \frac{1}{10}\right)\diam(2Q') + \sum_{Q'\in\scQ_c}\sH^1(\pi_{\gamma'}(U_{Q'})) + \sH^1\left(\pi_{\gamma'}\left(R_Q\setminus \bigcup_{Q'\in\scQ}2Q'\right)\right)\\
    &\geq \sum_{Q'\in\scQ}\left(1 + \frac{1}{10}\right)\sH^1(I_{Q'}) + \sum_{j\in J_{\scQ}} \sH^1(I_j)\\
    &\geq \frac{1}{10}\sum_{Q'\in\scQ}\sH^1(I_{Q'}) + \sH^1(\gamma')\\
    &\stackrel{\eqref{e:covering-big-proj}}{\geq} \frac{1}{40}\diam(U_Q) + (1-30\epsilon_1)\diam(U_Q) > \left(1 + \frac{1}{50}\right)\diam(U_Q).
\qedhere\end{align*}
\end{proof}

\begin{prop}[cf. \cite{Sc07} Lemma 3.28]\label{p:delta2.1-sigma-bound}
\begin{equation*}
    \sum_{Q\in\Delta_{2.1}}\beta_{\Sigma}(Q)^2\diam(Q) \lesssim_A \sum_{Q\in\Delta_{2.1}}\diam U_Q \lesssim_J \ell(\Sigma)
\end{equation*}
\end{prop}
\begin{proof}
Order $\Delta_{2.1}$ via the forest $\mathcal{T}_{\Delta_{2.1}}^{c_0}$. Using Lemma \ref{l:delta2.1-cores}, we apply Lemma \ref{l:martingale} with $\scL = \Delta_{2.1},\ D = 2\epsilon_1^{-1},\ q = \frac{1}{1+\frac{1}{50}}$ to get the existence of a collection of positive real-valued functions $\{w_Q\}_{Q\in\Delta_{2.1}}$ satisfying
\begin{enumerate}[label=(\roman*)]
    \item $\int_Q w_Qd\ell = \diam(U_Q),$ \text{and}
    \item $\sum_{Q\in\Delta'}w_Q(x) \lesssim \epsilon_1^{-1}\chi_{U^{c_0}_{\Delta_{2.1}}}(x)\text{ for almost every $x\in\Sigma$}$.
\end{enumerate}
Using these properties, we can finish the proof of the lemma as follows:
\begin{align*}
    \sum_{Q\in\Delta_{2.1}}\beta(Q)^2\diam(Q) &\lesssim_A \sum_{Q\in\Delta_{2.1}}\diam(U_Q) = \sum_{Q\in\Delta_{2.1}}\int_Qw_Qd\ell = \int_\Gamma\sum_{Q\in\Delta_{2.1}}w_Qd\ell \\
    &\lesssim_{\epsilon_1} \int_{U^{c_0}_{\Delta_{2.1}}}d\ell = \sum_{T\in\mathcal{T}^{c_0}_{\Delta_{2.1}}}\ell(U_{Q(T)})\lesssim_J \sH^1(\Sigma).
\qedhere\end{align*}
\end{proof}
\end{subsubsection}

\end{subsection}
\begin{subsection}{Almost flat arcs for $\Gamma$}\label{subsec:almost-flat-arcs-gamma}\
The goal of this section is to finish the proof of Proposition \ref{p:almost-flat} by proving the second inequality in \eqref{e:almost-flat-bound}. In Section \ref{subsec:def-tools}, we give preliminary definitions and lemmas needed to refine the results of the previous section. In Section \ref{subsec:martingale-refinement} we use these tools to strengthen the previously given martingale arguments for the family $\Delta_1$ and the newly defined family $\Delta_{2.1.1}\subseteq\Delta_2$. Finally, in Section \ref{subsec:delta2.1.2} we analyze the leftover family $\Delta_{2.1.2}$ and finish the proof of Proposition \ref{p:almost-flat}, and hence the proof of Theorem \ref{t:thmA}.

\begin{subsubsection}{New Definitions and Tools}\label{subsec:def-tools}
Recall that $\Gamma\subseteq\ell_2$ is a Jordan arc with an injective arc length parameterization $\gamma:I\rightarrow\Gamma$ where we fix $I\vcentcolon=[0,\ell(\Gamma)]$. We assume without loss of generality that the chord line of $\Gamma$ is the $e_1$-axis. Let $\pi:\ell_2\rightarrow\R$ is the orthogonal projection onto the $e_1$-axis and let $\pi^\perp:\ell_2\rightarrow\ell_2$ be the orthogonal projection onto the orthogonal hyperplane to the $e_1$-axis. For every $i\in\N$ the function $\gamma_i(t)\vcentcolon= \langle \gamma(t), e_i\rangle$ is $1$-Lipschitz, hence differentiable almost everywhere. We let $\gamma_i'(t)$ denote the derivative and write 
\begin{equation*}
    \gamma(t) = \sum_{i=1}^\infty \gamma_i(t)e_i \text{ and } \gamma'(t)\vcentcolon= \sum_{i=1}^\infty \gamma_i'(t)e_i.
\end{equation*}
The fact that $\gamma$ is an arc length parameterization means that $|\gamma'(t)| = 1$ almost everywhere. In particular, $\gamma'(t)$ gives an almost everywhere well-defined notion of tangent vector to $\Gamma$ at $\gamma(t)$. For $x\in\Gamma$, we let $t(x)\in I$ be the unique number such that $\gamma(t(x)) = x$. 

We begin by defining a new measure $\mu \ll \ell$ which quantifies how much subsets of $\Gamma$ contribute to the value of $\ell(\Gamma) - \crd(\Gamma)$.
\begin{defn}[$\mu$ measure]
Let $\rho:I\rightarrow[0,2]$ be given by
\begin{equation*}
    \rho(t) \vcentcolon= \begin{cases} 1 - \gamma^\prime_1(t),\ &\gamma_1^\prime(t)\ \text{exists} \\
    1,  &\text{otherwise}.
    \end{cases}
\end{equation*}
Define the finite Borel measure $\mu$ supported on $\Gamma$ as
\begin{equation*}
    d\mu \vcentcolon= \gamma_*(\rho\ dt).
\end{equation*}
where $\mu(A) = \gamma_*(\rho\ dt)(A) \vcentcolon= \int_{\gamma^{-1}(A)}\rho(t)dt$ is the pushforward of $\rho(t)dt$ by $\gamma$.
\end{defn}
The definition of $\mu$ is motivated by the fundamental theorem of calculus in the following way:
\begin{lem}\label{l:intmu}
Let $a,b\in I$ with $a \leq b$. Then 
\begin{equation*}
    \mu(\gamma|_{[a,b]}) = \ell(\gamma|_{[a,b]}) - (\pi(\gamma(b)) - \pi(\gamma(a))).
\end{equation*}
In particular,
\begin{equation*}
    \mu(\Gamma) = \ell(\Gamma) - \crd(\Gamma).
\end{equation*}
\end{lem}
\begin{proof}
We compute
\begin{align*}
    \mu(\gamma|_{[a,b]}) &\vcentcolon= \mu(\Image(\gamma_{[a,b]})) = \int_{\gamma([a,b])}\gamma_*(\rho\ dt) = \int_{a}^{b}\rho(t)\ dt = \int_{a}^{b}1 - \gamma_1'(t)\ dt\\ 
    &= (b-a) - (\gamma_1(b)) - \gamma_1(a)) = \ell(\gamma|_{[a,b]}) - (\pi(\gamma(b)) - \pi(\gamma(a))).
\end{align*}
Setting $a = 0$ and $b = \ell(\Gamma)$ gives $\mu(\Gamma) = \ell(\Gamma) - \crd(\Gamma)$.
\end{proof}
\begin{remark}[Null sets and examples]
Fix $x,y\in\Gamma$ with $t(x) < t(y)$ and suppose $\xi$ is a subarc of $\Gamma$ such that $\Start(\xi) = x$ and $\End(\xi) = y$. If $\mu(\xi) = 0$, then
\begin{equation*}
    \ell(\xi) = \pi(y) - \pi(x) = y_1 - x_1.
\end{equation*}
This forces $y_1 > x_1$ and forces $\xi$ to be a parameterization of the line segment $[x,y] = [x,x+(y_1-x_1)e_1$ which is parallel to the chord line of $\Gamma$. Now, suppose $\tau(t)\vcentcolon= x + t\frac{y-x}{|y-x|}$ for $t(x) \leq t \leq t(x) + |y-x|$. That is, $\tau$ parameterizes $[x,y]\subseteq\Gamma$. In general, we have the formula
\begin{equation}\label{e:mu-of-segment}
    \mu(\tau) = \ell(\tau) - (\pi(y) - \pi(x)) = |y-x|-(y_1-x_1).
\end{equation}
If $y_1 < x_1$, then $\mu$ is larger than $\ell$ on $\tau$; this measure assigns ``bonus'' length to arcs which ``backtrack'' along the direction of the chord line of $\Gamma$. If $y_1 > x_1$, the right side of \eqref{e:mu-of-segment} bears resemblance to triangle inequality excess estimates. Indeed, let $x,y,z\in\Gamma$ and suppose there exists a subarc $\eta$ such that
\begin{equation*}
    \eta(t)\vcentcolon= 
    \begin{cases}
        x + t\frac{y-x}{|y-x|},\ & t(x) \leq t \leq t(x) + |y-x|\\
        y + t\frac{z-y}{|z-y|},\ & t(x) + |y-x| t \leq t(x) + |y-x| + |z-y|.
    \end{cases}
\end{equation*}
The arc $\eta$ injectively parameterizes the line segments $[x,y]$ and $[y,z]$. We compute
\begin{equation*}
    \mu(\eta) = \ell(\eta) - (\pi(x) - \pi(z)) = |x-y| + |y-z| - (z_1-x_1).
\end{equation*}
When $x_1 < y_1 < z_1$, this is something like a triangle inequality excess estimate (see Remark \ref{rem:pythagorean-theorem}) where instead of subtracting the length of the triangle base $[x,z]$, we subtract the length of the projection of $[x,z]$ along the chord line of $\Gamma$ .
\end{remark}

Our goal for proving Theorem \ref{t:thmA} is to bound the beta sums above by $\mu(\Gamma)$ rather than $\ell(\Gamma)$. Intuitively, this is plausible since $\mu$ assigns small measure only to those regions of $\Gamma$ which are nearly parallel to the chord and are directed via the parameterization $\gamma$ towards the terminal endpoint of $\Gamma$, i.e., have $\gamma^\prime_1 > 1 - \delta$ for $\delta > 0$ small. One would expect $\beta_\Gamma(Q)$ to be small on average for $Q$ centered in such a region. There is a problem with this definition of $\mu$, however. We would like to have a bound of the form
\begin{equation}\label{e:desire-mulb}
    \mu(U_Q) \gtrsim \ell(U_Q)
\end{equation}
for individual cores in some family because this would allow a translation of the preceding martingale arguments to this setting. However, such a result cannot hold, as $\mu(U_Q) = 0$ may hold even for $U_Q$ with $\beta_\Gamma(U_Q) \approx 1$ (see Figure \ref{fig:mu0beta1}). However, in order for a situation like Figure \ref{fig:mu0beta1} to occur, there must be some ``backtracking'' arc (given in the figure by the bottom-most horizontal piece of $\Gamma$ outside of $U_Q$). On this arc, $\gamma_1' < 0$ so that $d\mu \geq d\ell$. To recover inequalities like \eqref{e:desire-mulb}, we will construct a new, larger measure $\tilde{\mu}$ that fills in the $\mu$ measure gaps in Figure \ref{fig:mu0beta1} by ``borrowing'' mass from backtracking arcs. We begin by isolating these regions of change as maximal disjoint arcs where $\Gamma$ ``bends" back on itself along the $e_1$ axis in the sense that the projection map $\pi$ is non-injective. This is made more precise with the following definition.
\begin{figure}[h]
    \centering
    \includegraphics[scale=0.5]{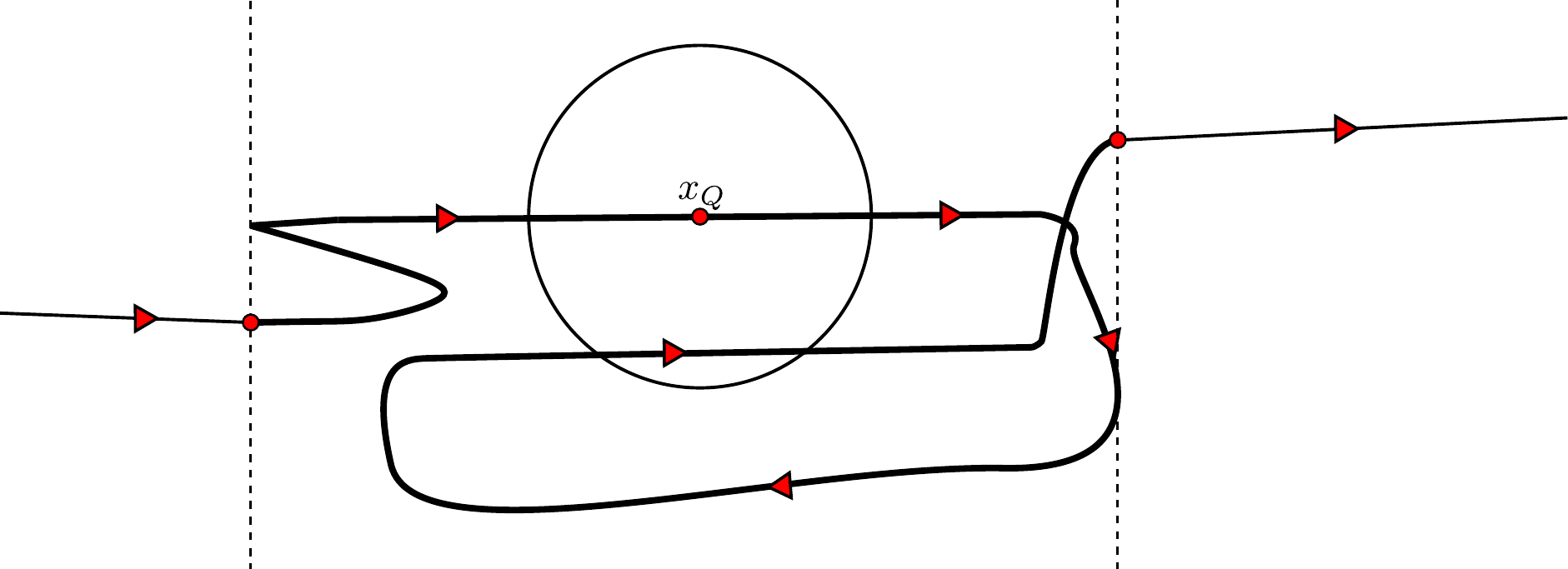}
    \caption{A core with $\beta_\Gamma(U_Q) \approx 1$ but $\mu(U_Q) = 0$. The red arrows indicate the direction of the parameterization such that $\rho \equiv 0$ on the two horizontal lines passing through $U_Q$. The thickened piece of $\Gamma$ in between the vertical dotted lines is a bend (assume that $\Gamma$'s chord line is horizontal).}
    \label{fig:mu0beta1}
\end{figure}

\begin{defn}[Multiplicity]
For $t\in I$, define
\begin{equation*}
    \prp(t) \vcentcolon= \gamma^{-1}(\pi^{-1}(\pi(\gamma(t))) \cap \Gamma).
\end{equation*}
This is the set of points in $I$ which map to points in $\Gamma$ that have the same first coordinate as $\gamma(t)$. Define $M:I\rightarrow\N\cup\{\infty\}$, the multiplicity function, by
\begin{equation*}
    M(t) \vcentcolon= \#\prp(t).
\end{equation*}
Additionally, we let $E \vcentcolon= \{t\in I : \pi(\gamma(t)) = \min(\pi(\Gamma)) \text{ or } \pi(\gamma(t)) = \max(\pi(\Gamma))\}$ and set
\begin{align*}
    S_\pi &\vcentcolon= \{t\in I : M(t) = 1\},\\
    M_\pi &\vcentcolon= \{t\in I : M(t) \geq 2\} \cup E. 
\end{align*}
$S_\pi$ is the set where $\prp(t) = \{t\}$ is a singleton while $M_\pi$ is the set where either $\prp(t)$ has multiple elements or $\gamma(t)$ is an extremal point of $\Gamma$ along the $e_1$-axis. If one of these latter points is also a member of $S_\pi$, then it is isolated by connectedness.  
\end{defn}
We will add mass to $\mu$ by raising the value of $\rho$ in a carefully chosen neighborhood of $M_\pi$. In order to define this neighborhood, we first provide a decomposition of $M_\pi$ into the maximal arcs promised above. The following structure lemma for $M_\pi$ will aid us:
\begin{lem}\label{l:bendconnected}
Suppose $a,b\in I$ with $\pi(\gamma(a)) = \pi(\gamma(b))$ and $a \leq b$. Then $[a,b]\subseteq M_\pi$.
\end{lem}
\begin{proof}
Let $r\in(a,b)$. If $\pi(\gamma(r)) = \pi(\gamma(a))$, then $r\in M_\pi$. Otherwise, $\pi(\gamma(r))\not=\pi(\gamma(a))$ and
\begin{equation*}
    \inf_{t\in[a,b]}\pi(\gamma(t)) \leq \pi(\gamma(r)) \leq \sup_{t\in[a,b]}\pi(\gamma(t)).
\end{equation*}
Since $\pi$ is continuous, there exist points $s,u\in[a,b]$ on which $\pi\circ\gamma$ achieves the infimum and supremum above respectively. Suppose first that $\pi(\gamma(r)) = \pi(\gamma(s))$. If there exists $s'\in I,\ s'\not=r$ such that $\pi(\gamma(s')) = \pi(\gamma(r))$, then $r\in M_\pi$ by definition. Otherwise, $\pi(x) > \pi(\gamma(r))$ for all $x\in\Gamma\setminus \gamma(r)$ so that $\gamma(r) = \min(\pi(\Gamma))\in E\subseteq M_\pi$. Therefore, it suffices to consider the case when $\pi(\gamma(r)) > \pi(\gamma(s))$. By a similar argument, we can also assume $\pi(\gamma(r)) < \pi(\gamma(u))$. Hence, the function  $f:[a,b]\rightarrow\R$ given by $f(t) = \pi(\gamma(t)) - \pi(\gamma(a))$ is continuous and satisfies
\begin{enumerate}[label=(\roman*)]
    \item $f(r)\not=0$,
    \item $f(s) < f(r) < f(u)$, \text{and}
    \item $f(a) = f(b) = 0$.
\end{enumerate}
We claim that the intermediate value theorem implies the existence of a point $r'\in[a,b],\ r\not=r'$ with $f(r') = f(r)$ so that $\gamma(r)\in M_\pi$. Indeed, suppose without loss of generality that $f(r) > 0$. If $u < r$, then $f(a) < f(r) < f(u)$ so that there exists such $r'\in[a,u]$. Otherwise, $u > r$ and $f(b) < f(r) < f(u)$ so that there exists such $r'\in[u,b]$.
\end{proof}
\begin{defn}[Bends]\label{def:bends}
It follows from Lemma \ref{l:bendconnected} that for any $t\in M_{\pi}$, the family
\begin{equation*}
    \sC_{t} \vcentcolon= \{[a,b]\subseteq I:t\in [a,b]\subseteq M_\pi\}
\end{equation*}
contains a non-degenerate interval so that the union $I_{t}\vcentcolon= \bigcup \sC_{t}$ is also a non-degenerate interval. The union $\bigcup_{t\in I}I_{t} = M_\pi$ has countably many disjoint connected components, each of which is a non-degenerate subinterval of $I$ (which is possibly open). Thus, the closure $\overline{M}_\pi$ has connected components that are closed, non-degenerate intervals which we enumerate as $\overline{M}_\pi = \bigcup_{k\in K} [s_k,u_k]$. We define 
\begin{equation*}
    \Phi \vcentcolon= \{\gamma|_{[s_k,u_k]} : k\in K\}.
\end{equation*}
We refer the elements of $\Phi$ as \textit{bends}.
\end{defn}

An important fact is that these regions contribute a proportionally large amount of measure to $\mu$.
\begin{lem}\label{l:mulb}
Let $\phi\in\Phi$. Then 
\begin{equation*}
    \mu(\phi) \geq \frac{1}{2}\ell(\phi).
\end{equation*}
\end{lem}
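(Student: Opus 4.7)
The plan is to use Lemma 4.2 to reduce $\mu(b)\geq \tfrac{1}{2}\ell(b)$ to a bound on the horizontal displacement across $b$, and then exploit the fact that a bend must double-cover its $\pi$-shadow. Write $b=[t_1,t_2]$ in the parameter interval, put $L=\ell(b)=t_2-t_1$, $m=\min_{[t_1,t_2]}\gamma_1$, and $M=\max_{[t_1,t_2]}\gamma_1$. By Lemma 4.2,
\begin{equation*}
    \mu(b) = L - (\gamma_1(t_2)-\gamma_1(t_1)) \geq L - (M-m),
\end{equation*}
so it suffices to prove $M-m\leq L/2$.

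The central claim, which I expect to be the main obstacle, is that for almost every $y\in(m,M)$ the fiber $\gamma_1^{-1}(y)\cap [t_1,t_2]$ has at least two elements. Granting this, the area formula (Lemma 1.9) applied with $f=\gamma_1$ and $g=\mathbf{1}_{[t_1,t_2]}$ gives
\begin{equation*}
    \int_{t_1}^{t_2}|\gamma_1'|\,dt = \int_\R \#\bigl(\gamma_1^{-1}(y)\cap[t_1,t_2]\bigr)\,dy \geq 2(M-m),
\end{equation*}
while arc-length parametrization forces $|\gamma_1'|\leq 1$ a.e., hence $\int_{t_1}^{t_2}|\gamma_1'|\,dt\leq L$. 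Combining these gives $M-m\leq L/2$.

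For the fiber claim, fix $y\in(m,M)$ outside the (at most two) values of $\gamma_1$ attained at the exceptional set $E$ from Lemma 4.4 (equivalently, outside the global extrema of $\gamma_1$ on $\Gamma$); these bad values form a null set and can be ignored. The intermediate value theorem produces some $t^*\in[t_1,t_2]$ with $\gamma_1(t^*)=y$, and by our choice of $y$ the point $\gamma(t^*)$ has $\pi$-multiplicity at least two in the strict sense, so there is $s\neq t^*$ in $[0,\ell(\Gamma)]$ with $\gamma_1(s)=y$. The key step is showing $s\in b$: applying Lemma 4.4 to $\gamma(s)$ and $\gamma(t^*)$, the closed interval with endpoints $s$ and $t^*$ has $\gamma$-image in $M_\pi\cup E\subseteq M_\pi$ (using the convention that absorbs $E$ into $M_\pi$). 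This closed interval contains $t^*$, so it belongs to the family $\sC_{t^*}$ and hence sits inside $I_{t^*}\subseteq b$, giving $s\in b$. The only delicate bookkeeping is that the finite exceptional set $E$ contributes only a null set of bad $y$-values, leaving the area-formula estimate unaffected.
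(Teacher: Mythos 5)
Your proof is correct and takes essentially the same route as the paper: the identity $\mu(b)=\ell(b)-(\pi(y)-\pi(x))$ from Lemma 4.2 together with the fact that $\pi$ is at least two-to-one almost everywhere on $b$, which forces $\ell(b)$ to be at least twice the length of the shadow. The paper simply asserts the two-to-one property and the resulting bound $\ell(b)\geq 2|\pi(y)-\pi(x)|$, while you fill in the details (the second preimage stays inside $b$ by Lemma 4.4 and maximality, and the area formula converts a.e.\ double covering into the length bound), so this is an elaboration of the same argument rather than a different one.
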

\begin{proof}
Let $\phi\in\Phi$. Lemma \ref{l:intmu} implies
\begin{equation*}
    \mu(\phi) = \ell(\phi) - \left[\pi(\End(\phi)) - \pi(\Start(\phi))\right].
\end{equation*}
But $\pi$ is at least two-to-one almost everywhere on $\Image(\phi)\subseteq\overline{M}_\pi$, implying $\ell(\phi) \geq 2|\pi(\End(\phi)) - \pi(\Start(\phi))|$.
\end{proof}
Lemma \ref{l:mulb} says that the bends are arcs on which $\mu$ measure is globally comparable to length measure. This allows us to promote $\mu$ to a bigger measure $\tilde{\mu}$ which is pointwise comparable to length inside bends at the cost of increasing the total measure by a bounded factor independent of $\Gamma$. In fact, at the cost of further increasing $\mu$'s mass by a bounded factor, we can take our proposed larger measure $\tilde{\mu}$ to be comparable to length on regions of $\Gamma$ which extend a distance comparable to $\ell(\phi)$ out from $\phi$ in the $e_1$ direction.
\begin{defn}[$\tilde{\mu}$ measure]\label{def:mu-tilde}
For any $\phi\in\Phi$, define
\begin{align*}
    N_\phi &\vcentcolon= \{t\in I : \dist(\pi(\gamma(t)),\pi(\Image(\phi))) \leq 100\ell(\phi)\},
\end{align*}
and set
\begin{equation*}
    N(\Phi) \vcentcolon= \bigcup_{\phi\in\Phi}N_\phi.
\end{equation*}
We define a new weight $\tilde{\rho}:I\rightarrow[0,2]$ as
\begin{equation*}
    \tilde{\rho}(t)\vcentcolon= 
    \begin{cases} 
        2,\ & t\in\Domain(\phi)\text{ for some $\phi\in\Phi$}\\
        1,\ & t\in N(\Phi)\setminus \bigcup_{\phi\in\Phi}\Domain(\phi)\\
        \rho(t), & t\in I \setminus N(\Phi).
    \end{cases}
\end{equation*}
We use the value $2$ in the first case so that $\tilde{\rho}(t) \geq \rho(t)$ for all $t\in I$. Put
\begin{equation*}
    d\tilde{\mu}\vcentcolon=\gamma_*(\tilde{\rho}\ dt).
\end{equation*}
In words, $\tilde{\mu}$ is equal to twice length measure on bends, equal to length measure just outside of bends, and equal to $\mu$ measure far away from bends. 
\end{defn}
Looking back at Figure \ref{fig:mu0beta1}, we can see that $\tilde{\mu}(U_Q) = 2\ell(U_Q)$ because $\Gamma\cap U_Q$ is contained in a single bend $\phi$. We will use $\tilde{\mu}$ as our primary accounting tool for bounding the beta-squared sum from above. We begin by verifying that the total mass of $\tilde{\mu}$ is controlled by the total mass of $\mu$.
\begin{lem}\label{l:mutilde}
\begin{equation*}
    \tilde{\mu}(\Gamma) \lesssim \mu(\Gamma).
\end{equation*}
\end{lem}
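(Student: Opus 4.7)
The plan is to decompose $\tilde\mu(\Gamma)$ according to the three cases in the definition of $\tilde\rho$:
\begin{equation*}
\tilde\mu(\Gamma) = 2\sum_j \ell(b_j) + \ell(\widetilde{M_\pi} \setminus M_\pi) + \mu(\Gamma \setminus \widetilde{M_\pi}),
\end{equation*}
and control each summand by a constant multiple of $\mu(\Gamma)$. Lemma 4.6 together with the disjointness of the bends immediately yields $\sum_j \ell(b_j) \leq 2\sum_j \mu(b_j) \leq 2\mu(\Gamma)$, which handles the first term. The third term is at most $\mu(\Gamma)$ because $\tilde\rho = \rho$ outside $\widetilde{M_\pi}$.

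The substantive step is bounding the middle term. The approach I would use is based on the observation that $\widetilde{M_\pi} \setminus M_\pi \subseteq S_\pi$, on which $\pi$ is globally injective. I would establish the following sublemma: for any Borel $A \subseteq S_\pi$,
\begin{equation*}
\ell(A) \leq \mu(A) + |\pi(A)|.
\end{equation*}
To prove this, first write $\ell(A) - \mu(A) = \int_{\gamma^{-1}(A)} \gamma_1'(t)\,dt \leq \int_{\gamma^{-1}(A)} |\gamma_1'(t)|\,dt$. Then apply the area formula (Lemma 1.9) with $f = \gamma_1$ and $g$ equal to the indicator of $\gamma^{-1}(A)$: since $A \subseteq S_\pi$ forces $\gamma_1$ to be injective on $\gamma^{-1}(A)$, the multiplicity sum in the area formula is at most $1$, giving $\int_{\gamma^{-1}(A)} |\gamma_1'(t)|\,dt = |\pi(A)|$.

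Specializing the sublemma to $A = \widetilde{M_\pi} \setminus M_\pi$ yields $\ell(\widetilde{M_\pi} \setminus M_\pi) \leq \mu(\Gamma) + |\pi(\widetilde{M_\pi})|$. By construction $\pi(\widetilde{M_\pi}) \subseteq \bigcup_j \bigl( \pi(b_j) + [-100\ell(b_j),\,100\ell(b_j)] \bigr)$, and $|\pi(b_j)| \leq \ell(b_j)$ because $\gamma$ is arc-length parameterized so $|\gamma_1'| \leq 1$. Summing, $|\pi(\widetilde{M_\pi})| \leq 201 \sum_j \ell(b_j) \lesssim \mu(\Gamma)$, completing the estimate. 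The main obstacle is the sublemma itself: because $\gamma_1'$ can switch sign on $S_\pi$, one cannot identify $\ell(A) - \mu(A)$ with $|\pi(A)|$ directly. The triangle inequality step that replaces $\gamma_1'$ with $|\gamma_1'|$ provides exactly the slack the area formula needs to convert an $L^1$ statement about the derivative into the geometric $|\pi(A)|$ bound, and it is this step that makes the injectivity of $\pi$ on $S_\pi$ do the real work.
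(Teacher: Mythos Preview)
Your proof is correct and follows essentially the same approach as the paper. The paper computes $\tilde{\mu}(\Gamma)-\mu(\Gamma)$ by localizing to the neighborhoods $B_j=\{x\in\Gamma:\dist(\pi(x),\pi(b_j))\leq 100\ell(b_j)\}\setminus\bigcup_{i\neq j}b_i$ and bounding $\tilde{\mu}(B_j)-\mu(B_j)\leq |\pi(B_j\setminus b_j)|+2\ell(b_j)\leq 404\,\mu(b_j)$, whereas you decompose $\tilde{\mu}(\Gamma)$ globally into three pieces; but the analytic core---your sublemma $\ell(A)-\mu(A)\leq|\pi(A)|$ for $A\subseteq S_\pi$ via the area formula, together with Lemma~4.6---is exactly the step the paper uses implicitly when it writes $\ell(B_j\setminus b_j)-\mu(B_j\setminus b_j)\leq|\pi(B_j\setminus b_j)|$.
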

\begin{proof}
Fix $\phi\in\Phi$ and consider the region 
\begin{equation*}
    U_\phi \vcentcolon= \gamma\left(N_\phi\setminus\bigcup_{\substack{\phi'\in\Phi \\ \phi'\not=\phi}}\Image(\phi')\right)
\end{equation*}
Observe that we can bound the mass added in $U_\phi$ as follows:
\begin{align*}
    \tilde{\mu}(U_\phi) - \mu(U_\phi) &= \tilde{\mu}(U_\phi\setminus\Image(\phi)) + \tilde{\mu}(\phi) - \left[\mu(U_\phi\setminus \Image(\phi)) + \mu(\phi)\right] \\
    &= \ell(U_\phi\setminus\Image(\phi)) - \mu(U_\phi\setminus \Image(\phi)) + 2\ell(\phi) - \mu(\phi) \\
    &\leq 2\ell(\phi) + \int_{U_\phi\setminus\Image(\phi)}\gamma_1'(t)\ dt \\
    &\leq \diam(\pi(U_\phi\setminus\Image(\phi))) + 2\ell(\phi) \leq 200\ell(\phi) + 2\ell(\phi) \leq 404\mu(\phi)
\end{align*}
where the final inequality follows from Lemma \ref{l:mulb}. The fact that $\tilde{\rho}(t) \geq \rho(t)$ for all $t\in I$ implies $\tilde{\mu} - \mu$ is a positive measure so that, because $\tilde{\rho}(t) = \rho(t)$ for all $t\in I \setminus N(\Phi)$ and $N(\Phi) = \bigcup_{\phi\in\Phi}\gamma^{-1}(U_\phi)$,
\begin{align*}
    \tilde{\mu}(\Gamma) - \mu(\Gamma) &\leq \tilde{\mu}\left(\bigcup_{\phi\in\Phi}U_\phi\right)  - \mu\left(\bigcup_{\phi\in\Phi}U_\phi\right) = (\tilde{\mu}-\mu)\left(\bigcup_{\phi\in\Phi}U_\phi\right)\\
    &\leq \sum_{\phi\in\Phi}(\tilde{\mu}-\mu)(U_\phi) = \sum_{\phi\in\Phi}\tilde{\mu}(U_\phi)-\mu(U_\phi) \leq 404\sum_{\phi\in\Phi}\mu(\phi) \leq 404\mu(\Gamma).
\qedhere\end{align*}
\end{proof}
The final lemma we will prove in this section gives sufficient conditions for an inequality like $\tilde{\mu}(B)\gtrsim\ell(B)$ to hold for any Borel set $B$. The fact that $\tilde{\mu}$ is comparable to $\ell$ on $\gamma(M_\pi)$ means that in order for $\tilde{\mu}(B) \ll \ell(B)$ to hold, most of $\Gamma\cap B$ must be contained in $\gamma(S_\pi)$. Even then, it must be true that $\tilde{\rho} \ll 1$ on most of $\Gamma\cap B$ so that $\gamma_1'\approx 1$ inside $B$, constraining the total amount of length that $B$ is allowed to contain. Lemma \ref{l:mulength} proves a sort of contrapositive of this observation, showing that a lower bound on $\ell(B)$ translates into a lower bound on $\tilde{\mu}(B)$ in terms of $\ell(B)$. First, we will need a version of the area formula:
\begin{lem}{(Area formula)}\label{l:area}
Let $f:\R\rightarrow\R$ be a Lipschitz map and let $g:\R\rightarrow\R$ be an integrable function. Then the map
\begin{equation*}
    z\mapsto\sum_{x\in f^{-1}(\{z\})}g(x),
\end{equation*}
is measurable, and
\begin{equation*}
    \int_\R g(y)|f^\prime(y)| dy = \int_\R \sum_{x\in f^{-1}(\{z\})}g(x) dz.
\end{equation*}
\end{lem}
For proof of this result, see \cite{Fe69} Theorem 3.2.5.
\begin{lem}\label{l:mulength}
Fix $\delta < \frac{1}{10}$ and let $B\subseteq \ell_2$ be Borel. If either
\begin{enumerate}[label=(\roman*)]
    \item $\ell(B) \geq (1+\delta)\diam(B),$ \text{or} \label{i:ell-diam}
    \item $\tilde{\mu}(B) \geq\delta\diam(B)$, \label{i:mu-diam}
\end{enumerate}
then,
\begin{equation*}
    \tilde{\mu}(B) \geq \frac{\delta^3}{2}\ell(B).
\end{equation*}
\end{lem}
\begin{proof}
We first prove that \ref{i:ell-diam} implies the conclusion. Consider the set $E_{\delta^2} = \{t\in I:\tilde{\rho}(t) < \delta^2\}\cap \gamma^{-1}(B)$. Observe that $\tilde{\rho}(t) < \delta^2$ implies $\gamma_1^\prime(t) > 1-\delta^2$ so that $E_{\delta^2}\subseteq S_\pi$. The former inequality directly implies $t\in S_\pi$ so that $E_{\delta^2} \subseteq S_\pi$. Applying the area formula (Lemma \ref{l:area}) with the Lipschitz function $\gamma_1:I\rightarrow\R$ and the integrable function $\frac{1}{\gamma_1^\prime}\chi_{E_{\delta^2}}$ gives
\begin{equation*}
    \int_{E_{\delta^2}}\frac{1}{\gamma_1^\prime}|\gamma_1^\prime|\ dt = \int_{\R}\sum_{s\in \gamma_1^{-1}(u)}\frac{1}{\gamma_1^\prime(s)}\chi_{E_{\delta^2}}(s)\ du = \int_{\R}\sum_{s\in\gamma^{-1}(\pi^{-1}(u))\cap E_{\delta^2}}\frac{1}{\gamma_1'(s)}\ du.
\end{equation*}
Because $E_{\delta^2}\subseteq S_\pi$ and $\gamma$ is injective, the set $\gamma^{-1}(\pi^{-1}(u))\cap E_{\delta^2}$ contains at most one element, and is nonempty only if $u\in \pi(\gamma(E_{\delta^2}))\subseteq B$. Therefore, we get
\begin{align*}
    \sum_{s\in\gamma^{-1}(\pi^{-1}(u))\cap E_{\delta^2}}\frac{1}{\gamma_1'(s)} = \sum_{s\in\gamma^{-1}(\pi^{-1}(u))\cap E_{\delta^2}}\frac{1}{\gamma_1'(s)}\chi_{\gamma_1(E_{\delta^2})}(u) \leq \frac{1}{1-\delta^2}\chi_{\gamma_1(E_{\delta^2})}(u).
\end{align*}
Using these statements, the area formula simplifies to
\begin{align*}
    \ell(E_{\delta^2}) = \int_{\R}\sum_{s\in\gamma^{-1}(\pi^{-1}(u))\cap E_{\delta^2}}\frac{1}{\gamma_1'(s)}\ du \leq \int_{\gamma_1(E_{\delta^2})}\frac{1}{1-\delta^2}\ du \leq \frac{\diam(B)}{1-\delta^2}.
\end{align*}
Now, define $C_{\delta^2}\vcentcolon= B\cap\Gamma\setminus \gamma(E_{\delta^2})$. We have
\begin{align*}
    \tilde{\mu}(B) \geq \tilde{\mu}(C_{\delta^2}) \geq \delta^2\ell(C_{\delta^2}) = \delta^2(\ell(B) - \ell(E_{\delta^2})).
\end{align*}
Adding in the lemma's hypothesis, we have both
\begin{align*}
    \ell(B) &\geq (1+\delta)\diam(B), \text{ and}\\
    \ell(E_{\delta^2}) &\leq \frac{\diam(B)}{1-\delta^2}.
\end{align*}
This means
\begin{align*}
    \frac{\ell(E_{\delta^2})}{\ell(B)} &\leq \frac{\diam(B)}{1-\delta^2}\cdot\frac{1}{(1+\delta)\diam(B)} = 1 + \left(\frac{1}{(1+\delta)(1-\delta^2)} - 1\right) \\
    &= 1 - \frac{\delta - \delta^2 - \delta^3}{(1+\delta)(1-\delta^2)} \leq 1 - \frac{\delta - \frac{\delta}{10} - \frac{\delta}{100}}{1+\frac{1}{10}} \leq 1-\frac{\delta}{2}.
\end{align*}
using the fact that $\delta < \frac{1}{10}$. Rearranging this inequality gives $\ell(B) - \ell(E_{\delta^2}) \geq \frac{\delta}{2}\ell(B)$. Therefore
\begin{equation*}
    \tilde{\mu}(B) \geq \delta^2(\ell(B) - \ell(E_{\delta^2})) \geq \frac{\delta^3}{2}\ell(B).
\end{equation*}
This concludes the proof that \ref{i:ell-diam} implies the conclusion. We now show that \ref{i:mu-diam} implies the conclusion. From \ref{i:ell-diam}, it suffices to assume $\ell(B) < (1+\delta)\diam(B)$. Then,
\begin{equation*}
    \tilde{\mu}(B) \geq \delta\diam(B) \geq \delta\frac{1+\delta}{2}\diam(B) \geq \frac{\delta}{2}\ell(B).
\qedhere\end{equation*}
\end{proof}

\end{subsubsection}

\begin{subsubsection}{\texorpdfstring{Martingale refinement: Bounds on the $\Delta_1$ and $\Delta_{2.1.1}$ sums for $\Gamma$}{}}\label{subsec:martingale-refinement}
In this section, we provide the refinements of Proposition \ref{p:delta1-sigma-bound} and (part of) Proposition \ref{p:delta1-sigma-bound} for a rectifiable Jordan arc $\Gamma$. 
\begin{lem}\label{l:delta1-mutilde}
For any $Q\in\Delta^\prime(M,K)$
\begin{equation*}
    \tilde{\mu}(U^{xx}_Q) \gtrsim \ell(U^{xx}_Q).
\end{equation*}
\end{lem}
\begin{proof}
In the proof of Lemma \ref{l:delta1-cores}, we gave the existence of an arc $\xi_0\subseteq U_Q^{xx}$ such that
\begin{equation*}
    \ell(U_Q^{xx}) \geq \ell(\gamma_Q\cap U_Q^{xx}) + \ell(\xi_0) \geq \left(1 + \frac{1}{10}\right)\diam(U_Q^{xx})
\end{equation*}
as in \eqref{e:delta1-diam-lb}. Applying Lemma \ref{l:mulength} gives the result.
\end{proof}

\begin{prop}\label{p:delta1-gamma}
\begin{equation*}
    \sum_{Q\in\Delta_1}\beta_{\Gamma}(Q)^2\diam(Q) \lesssim_A \sum_{Q\in\Delta_1}\diam(U_Q) \lesssim_{A,J} \tilde{\mu}(\Gamma).
\end{equation*}
\end{prop}
\begin{proof}
Fix $\Delta' = \Delta'(M,K)$ and follow the proof of Proposition \ref{p:delta1-sigma-bound} to get
\begin{equation*}
    \sum_{Q\in\Delta'}\beta_\Gamma(Q)\diam(Q) \lesssim_A 2^{-M}\sum_{T\in\mathcal{T}_{\Delta'}^{16c_0}}\ell(U_{Q(T)}^{xx}) \lesssim 2^{-M}\tilde{\mu}(U_{Q(T)}^{xx}) \leq 2^{-M}\tilde{\mu}(\Gamma).
\end{equation*}
The result follows by summing over $M$ and $K$.
\end{proof}

We wish to argue for similarly for $\Delta_{2.1}$, but an inequality like that of Lemma \ref{l:delta1-mutilde} does not hold for $\Delta_{2.1}$ balls. We proceed by splitting $\Delta_{2.1}$ into a subfamily where $\tilde{\mu}(U_Q)\gtrsim\ell(U_Q)$ on which we can run the martingale argument and a leftover subfamily on which $\tilde{\mu}(U_Q)\ll\ell(U_Q)$. We define
\begin{align*}
    \Delta_{2.1.1} &= \{Q\in\Delta_{2.1}: \tilde{\mu}(U_{Q}) \geq \epsilon_3^2\ell(U_{Q})\},\\
    \Delta_{2.1.2} &= \{Q\in\Delta_{2.1}: \tilde{\mu}(U_Q) < \epsilon_3^2 \ell(U_Q)\}\\
    &=\Delta_{2.1}\setminus \Delta_{2.1.1}.
\end{align*}
The collection $\Delta_{2.1.1}$ can be handled with the addition of one inequality to the proof of Proposition \ref{p:delta2.1-sigma-bound}.
\begin{prop}
\begin{equation*}
    \sum_{Q\in\Delta_{2.1.1}}\beta_\Gamma(Q)^2\diam(Q) \lesssim_A \sum_{Q\in\Delta_{2.1.1}}\diam(U_Q) \lesssim_{J,\epsilon_1,\epsilon_3} \mu(\Gamma).
\end{equation*}
\end{prop}
\begin{proof}
We apply Lemma \ref{l:martingale} with  $D = 2\epsilon_1^{-1}, q = \frac{1}{1+\frac{1}{50}}$ to the family $\scL = \Delta_{2.1.1}$ with ordered by the forest structure $\mathcal{T}^{c_0}_{\Delta_{2.1.1}}$ and use the produced collection $\{w_Q\}_{Q\in\Delta_{2.1.1}}$ to calculate, as in the proof of Proposition \ref{p:delta2.1-sigma-bound},
\begin{align*}
     \sum_{Q\in\Delta_{2.1.1}}\diam(U_Q)
    &\lesssim_{\epsilon_1}\sum_{T\in\mathcal{T}_{\Delta_{2.1.1}}^{c_0}}\ell(U_{Q(T)}) \lesssim_{\epsilon_3}\sum_{T\in\mathcal{T}_{\Delta_{2.1.1}}^{c_0}}\tilde{\mu}(U_{Q(T)}) \lesssim_J \tilde{\mu}(\Gamma).
\qedhere\end{align*}
\end{proof}
\end{subsubsection}

\begin{subsubsection}{Bound on the \texorpdfstring{$\Delta_{2.1.2}$}{} sum}\label{subsec:delta2.1.2}
We now handle the family $\Delta_{2.1.2}$, beginning with a general summary of the argument. We show that $Q\in\Delta_{2.1.2}$ implies $U_Q\cap\Gamma$ essentially consists of a small perturbation (in length) of a line segment through the center of $Q$ which is parallel to the chord line of $\Gamma$ (see Lemma \ref{l:straightcenter}). Because of the definition of the bends and $\tilde{\mu}$, the nearly-segment pieces inside disjoint cores in this family project to line segments on the chord line of $\Gamma$ which have controlled overlap (see Lemma \ref{l:projection-packing}). By decomposing $\Delta_{2.1.2}$ into a sequence of ``levels'', each of which consists of a disjoint subfamily of $\Delta_{2.1}$, we can exploit this packing lemma by controlling the number of balls which have overlapping $\tau_Q$ arcs (see Remark \ref{rem:tau-q}), controlling the core diameter sum on each level in terms of a disjoint collection of subarcs of $\tau_Q$'s (see Lemma \ref{l:maximal-tau-bound} and \ref{l:zeta-disjoint}). This all works for the subcollection of cores which lie on the ``inner'' region of parent cores. The proof is completed by showing that the ``outer'' family of cores is controlled by the inner family (see Lemma \ref{l:deltaO-bound}).

Let us begin the proof. Define
\begin{equation*}
    \eta_Q \vcentcolon= \{x_Q + t e_1 : t\in\R\}.
\end{equation*}
This is the line parallel to the chord of $\Gamma$ which passes through the center of $Q$. The next lemma states that any $Q\in \Delta_{2.1.2}$ has $\gamma_Q$ close to $\eta_Q$ with constant dependent on $\epsilon_3$.
\begin{lem}\label{l:straightcenter}
Let $Q\in\Delta_{2.1.2}$ and $\xi\subseteq\Gamma$ such that $\xi\cap\gamma_Q = \varnothing$. Then,
\begin{enumerate}[label=(\roman*)]
    \item $\gamma_Q\subseteq B(\eta_Q,100\epsilon_3\diam(Q))$, \text{and}\label{i:gamma-sub}
    \item $\xi \cap \pi^{-1}((1-10\epsilon_1)c_0Q) = \varnothing.$\label{i:xi-empty}
\end{enumerate}
\end{lem}
\begin{proof}
We begin by proving \ref{i:gamma-sub}. We will assume that $\gamma_Q\not\subseteq B(\eta_Q, 100\epsilon_3\diam(Q))$ and show that $\tilde{\mu}(U_Q) > \epsilon_3^2\ell(U_Q)$. Let $\gamma'' \vcentcolon= \Edge(\gamma_Q),\ \gamma' \vcentcolon= \gamma''\cap c_0Q =\vcentcolon [x,y]$, and let $\varphi$ be a connected component of $\gamma_Q\cap c_0Q$ of largest diameter. We will show that $\gamma'$ makes angle of order $\epsilon_3$ with $\eta_Q$, derive a lower bound for the ``excess'' length of $\gamma'$, and then use that to bound $\frac{\tilde{\mu}(\varphi)}{\ell(\varphi)}$ from below. First, observe that $\tb{\gamma_Q}\diam(2Q) \leq 2\epsilon_2\diam(Q)$ implies, by Lemma \ref{l:almost-flat-crossing},
\begin{equation}\label{e:gamma''-haus-dist}
    \gamma_Q \subseteq B(\gamma'', 2\epsilon_2\diam(Q)) \text{ and } \gamma''\subseteq B(\gamma_Q, 2\epsilon_2\diam(Q)).
\end{equation}
Let $z\in\gamma_Q$ be such that $\dist(z,\eta_Q) \geq 100\epsilon_3\diam(Q)$. Then, there exists $z''\in\gamma''$ such that $|z-z''| \leq 2\epsilon_2\diam(Q) \leq \epsilon_3\diam(Q)$ so that $\dist(z'', \eta_Q) \geq \dist(z,\eta_Q) - |z-z''| \geq 99\epsilon_3\diam(Q)$.
We can define the angle $\theta\vcentcolon= \angle(\gamma',\eta_Q) = \angle(\gamma'',\eta_Q)$ by translating the segment $\gamma''$ so that one of its endpoints lies in $\eta_Q$ and measuring the angle in the (at most $2$-dimensional) plane containing $\eta_Q$ and this translated segment. The previous estimates then imply $\tan(\theta) \geq \frac{99\epsilon_3\diam(Q)}{\diam(2Q)} \geq 45\epsilon_3$. Using the Pythagorean theorem, we get $|x-y|^2 = |\pi(x) - \pi(y)|^2 + |\pi^{\perp}(x) - \pi^{\perp}(y)|^2$, and the lower bound on $\tan(\theta)$ implies $|\pi^\perp(x) - \pi^\perp(y)| \geq 45\epsilon_3|\pi(x)-\pi(y)|$. Using the difference of squares formula with the Pythagorean theorem estimate, we compute 
\begin{align}\nonumber
    \frac{|x-y| - |\pi(x) - \pi(y)|}{|x-y|} &= \frac{|\pi^{\perp}(x) - \pi^{\perp}(y)|^2}{|x-y|(|x-y| + |\pi(x)-\pi(y)|)} \geq 45^2\epsilon_3^2\frac{|\pi(x)-\pi(y)|^2}{|x-y|(|x-y| + |\pi(x)-\pi(y)|)}\\ 
    &\geq 45^2\epsilon_3^2\frac{\frac{1}{4}\diam(U_Q)^2}{\diam(U_Q)\cdot2\diam(U_Q)} \geq 100\epsilon_3^2 \label{e:seg-mu}
\end{align}
Now, let $x',y'$ be the endpoints of $\varphi$ on $\partial(c_0Q)$. By \eqref{e:gamma''-haus-dist} and the fact that $x_Q\in\gamma_Q$, we have
\begin{align*}
    |x-x'| \leq 8\epsilon_2\diam(Q) \text{ and } |y-y'| \leq 8\epsilon_2\diam(Q).
\end{align*}
We estimate
\begin{align}\nonumber
    \frac{\tilde{\mu}(\varphi)}{\ell(\varphi)} &\geq \frac{\mu(\varphi)}{\ell(\varphi)} \geq 1 - \frac{|\pi(x') - \pi(y')|}{|x'-y'|} = \frac{|x'-y'| - |\pi(x') - \pi(y')|}{|x'-y'|}\\\nonumber
    &\geq \frac{|x-y| - |\pi(x) - \pi(y)| - 32\epsilon_2\diam(Q)}{|x-y| + 16\epsilon_2\diam(Q)}\\
    &\geq \frac{1}{2}\frac{|x-y| - |\pi(x) - \pi(y)|}{|x-y|} - \frac{32\epsilon_2\diam(Q)}{|x-y| + 16\epsilon_2\diam(Q)} \geq 50\epsilon_3^2 - 64c_0^{-1}\epsilon_2 \geq 40\epsilon_3^2\label{e:phi-bound}
\end{align}
where we used \eqref{e:seg-mu} and $|x-y| \geq \frac{1}{2}c_0\diam(Q)$ in the penultimate inequality. We would like to show that $\tilde{\mu}(\gamma_Q\cap U_Q) \geq \epsilon_3^2\ell(\gamma_Q\cap U_Q)$. Given the preceding inequality, the only possible obstruction is the existence of components of $\gamma_Q\cap U_Q$ with long length and small $\tilde{\mu}$ measure. It suffices to consider the case where $\tilde{\mu}(\gamma_Q\cap U_Q\setminus\varphi) \leq 40\epsilon_3^2\ell(\gamma_Q\cap U_Q\setminus\varphi)$. Unpacking this inequality, we see
\begin{align*}
    40\epsilon_3^2\ell(\gamma_Q\cap U_Q\setminus\varphi) \geq \tilde{\mu}(\gamma_Q\cap U_Q\setminus \varphi) \geq \mu(\gamma_Q\cap U_Q\setminus\varphi) \geq \ell(\gamma_Q\cap U_Q\setminus\varphi) - \diam(\pi(\gamma_Q\cap U_Q\setminus\varphi)).
\end{align*}
Rearranging gives
\begin{equation}{\label{e:gamma-minus-phi}}
    \ell(\gamma_Q\cap U_Q\setminus\varphi) \leq \frac{\diam(\pi(\gamma_Q\cap U_Q\setminus\varphi))}{1-\epsilon_3^2} \leq \frac{\diam(U_Q)}{1-40\epsilon_3^2} \leq 2\ell(\varphi)
\end{equation}
where the final inequality follows since $x_Q\in\gamma_Q\in S(Q)$. Using \eqref{e:phi-bound} and \eqref{e:gamma-minus-phi},
\begin{align*}
    \tilde{\mu}(\gamma_Q\cap U_Q) &\geq \tilde{\mu}(\varphi) \geq 40\epsilon_3^2\ell(\varphi)
    \geq \frac{1}{2}(40\epsilon_3^2\ell(\varphi) + 10\epsilon_3^2\ell(\gamma_Q\cap U_Q\setminus \varphi)) \geq 5\epsilon_3^2\ell(\gamma_Q\cap U_Q).
\end{align*}
With this intermediate inequality, we can now prove the lemma. Arguing as in the proof of $\tilde{\mu}(\gamma_Q\cap U_Q) \geq 5\epsilon_3^2\ell(\gamma_Q\cap U_Q)$ above, it suffices to assume that $\tilde{\mu}(U_Q\setminus\gamma_Q) \leq 5\epsilon_3^2 \ell(U_Q\setminus\gamma_Q)$. We get
\begin{equation*}
    \ell(U_Q\setminus\gamma_Q) \leq \frac{\diam(\pi(U_Q\setminus\gamma_Q))}{1-5\epsilon_3^2} \leq 2\ell(\gamma_Q\cap U_Q).
\end{equation*}
Multiplying this inequality on both sides by $\epsilon_3^2$, we use this to estimate
\begin{align*}
    \tilde{\mu}(U_Q) &\geq \tilde{\mu}(U_Q\cap\gamma_Q) \geq 5\epsilon_3^2\ell(U_Q\cap \gamma_Q)
    \geq 3\epsilon_3^2\ell(U_Q\cap\gamma_Q) + \epsilon_3^2\ell(U_Q\setminus\gamma_Q) > \epsilon_3^2\ell(U_Q).
\end{align*}
This concludes the proof of \ref{i:gamma-sub}. We now prove \ref{i:xi-empty}. Suppose $Q\in\Delta_{2.1.2}$ is such that $\xi\cap\pi^{-1}(1-10\epsilon_1)c_0Q\not=\varnothing$ and let $x\in\xi\cap\pi^{-1}((1-10\epsilon_1)c_0Q)$. We will show that $\tilde{\mu}(U_Q) > \epsilon_3^2\ell(U_Q)$. By Lemma \ref{l:straightcenter}, we have either $\gamma_Q\cap U_Q \cap \{z : \pi(z) \geq \pi(x)\} \subseteq \gamma(\overline{M}_\pi)$ or $\gamma_Q\cap U_Q \cap \{z : \pi(z) \leq \pi(x)\} \subseteq \gamma(\overline{M}_\pi)$ because the extension of $\gamma_Q$ to an arc containing $\xi$ must cross from the boundary of $2Q$ to $x$ with an arc disjoint from $\gamma_Q$. Therefore, we conclude that $\gamma_Q\cap U_Q$ contains an arc $\zeta\subseteq\gamma_Q\cap c_0Q\cap(1-10\epsilon_1)c_0Q$ with $\tilde{\mu}(\zeta) = 2\ell(\zeta) \geq 2(10\epsilon_1c_0)\rad(Q) \geq 10\epsilon_1\diam(U_Q)$. This means $\tilde{\mu}(U_Q) \geq 2\epsilon_1\diam(U_Q)$ so that, by Lemma \ref{l:mulength},
\begin{equation*}
    \tilde{\mu}(U_Q) \geq \frac{(2\epsilon_1)^3}{2}\ell(U_Q) \geq \epsilon_1^3\ell(U_Q) > \epsilon_3^2\ell(U_Q).
\qedhere\end{equation*}
\end{proof}
This lemma places strong restrictions of the geometry of $\Gamma\cap 2Q$. The fact that $\gamma_Q$ is restricted to be nearly parallel to $\eta_Q$ on the scale of $\diam(Q)$ allows us to derive packing estimates for disjoint families of balls in $\Delta_{2.1.2}$ along the direction of the chord of $\Gamma$ as in the following lemma.
\begin{lem}\label{l:projection-packing}
For any $Q_1,Q_2\in\Delta_{2.1.2}$ such that $U_{Q_1}\cap U_{Q_2} = \varnothing$,
\begin{equation*}
    \pi\left(\frac{1}{4}c_0Q_1\right) \cap \pi\left(\frac{1}{4}c_0Q_2\right) = \varnothing.
\end{equation*}
\end{lem}

\begin{proof}
Suppose by way of contradiction that $\pi\left(\frac{1}{4}c_0Q_1\right) \cap \pi\left(\frac{1}{4}c_0Q_2\right) \not= \varnothing$ and assume $\diam(Q_2)\leq\diam(Q_1)$. The fact that $c_0Q_1\cap c_0Q_2 = \varnothing$ and $\pi\left(\frac{1}{4}c_0Q_1\right) \cap \pi\left(\frac{1}{4}c_0Q_2\right) \not= \varnothing$ imply, respectively,
\begin{align*}
    |x_{Q_1} - x_{Q_2}| &\geq c_0(\rad(Q_1) + \rad(Q_2)),\\
    |\pi(x_{Q_1}) - \pi(x_{Q_2})| &\leq \frac{c_0}{4}(\rad(Q_1) + \rad(Q_2)).
\end{align*}
From these, we estimate
\begin{align*}
    |\pi^{\perp}(x_{Q_1}) - \pi^{\perp}(x_{Q_2})| &\geq |x_{Q_1} - x_{Q_2}| - |\pi(x_{Q_1}) - \pi(x_{Q_2})|\\ 
    &\geq \frac{3c_0}{4}(\rad(Q_1) + \rad(Q_2)) \geq \frac{c_0}{2}\rad(Q_1) + c_0\rad(Q_2).
\end{align*}
Therefore, $B(\eta_{Q_1},100\epsilon_3\diam(Q_1)) \cap U_{Q_2} =\varnothing$ because $100\epsilon_3\diam(Q_1) < \frac{c_0}{4}\rad(Q_1)$ so that Lemma \ref{l:straightcenter} implies $\gamma_{Q_1}\cap U_{Q_2} = \varnothing$ and $\pi(U_{Q_2})\subseteq\pi(\gamma_{Q_1})$. Because $\pi(U_{Q_2})\subseteq \pi(\gamma_{Q_2})$, we get $U_{Q_2}\subseteq M_\pi$, implying  $\tilde{\mu}(U_{Q_2}) = 2\ell(U_{Q_2})$, contradicting the fact that $\tilde{\mu}(U_{Q_2}) < \epsilon_3^2\ell(U_{Q_2})$ because $Q_2\in\Delta_{2.1.2}$.
\end{proof}
\begin{defn}[Levels and inner/outer cores]
Because we would prefer to work with pairwise disjoint subfamilies of $\Delta_{2.1.2}$ in view of Lemma \ref{l:projection-packing}, we will divide $\Delta_{2.1.2}$ into pairwise disjoint ``levels'' using its tree structure. Indeed, fix $j,\ 1\leq j \leq J$ and recall $\scQ_j$ is one of the $J$ families of balls ordered by inclusion of cores constructed in Proposition \ref{p:cores}. Consider the family $\Delta_{2.1.2}^j \vcentcolon= \Delta_{2.1.2}\cap\scQ_j$. We define the $k$-th level of $\Delta_{2.1.2}^j$ for $k \geq 0$ as
\begin{equation*}
    \scL_k \vcentcolon= \{Q\in\Delta_{2.1.2}^j : Q\in C^k(Q(T)),\ T\in\mathcal{T}_{\Delta_{2.1.2}^j}^{c_0}\}
\end{equation*}
where we set $C^0(Q(T)) = \{Q(T)\}$. The family $\scL_k$ is pairwise disjoint for any $k \geq 0$ and $\Delta_{2.1.2}^j = \bigcup_{k\geq0}\scL_k$. We additionally want to single out balls which live away from from the boundary of the core of their parent in the tree structure. We define the inner and outer balls:
\begin{align*}
    \Delta_I &\vcentcolon= \scL_0\cup\{Q\in\Delta_{2.1.2}^j\setminus\scL_0 : 2Q\subseteq (1-5\epsilon_1)c_0P(Q)\},\\
    \Delta_O &\vcentcolon= \Delta_{2.1.2}^j \setminus \Delta_I.
\end{align*}
\end{defn}
We can show that the diameters of the outer cores are controlled by the diameters of the inner cores using Lemma \ref{l:projection-packing} and some algebra.
\begin{lem}\label{l:deltaO-bound}
\begin{equation*}
    \sum_{Q\in\Delta_O}\diam(U_Q) \lesssim \sum_{Q\in\Delta_I}\diam(U_Q).
\end{equation*}
\end{lem}
\begin{proof}
Fix $Q\in\Delta^j_{2.1.2}$ and let $Q'\in C(Q)$. Recall that $\diam(2Q') \leq \epsilon_1\diam(U_{Q})$ as in \eqref{e:child-diam}. Hence, if $Q'\in\Delta_O$, then Lemma \ref{l:straightcenter} implies
\begin{equation}\label{e:inner-core-inclusion}
    (\Gamma\setminus\gamma_Q)\cap2Q' \cap \pi^{-1}\left((1-10\epsilon_1)c_0Q\right) = \varnothing.
\end{equation}
Because the cores of balls in $C(Q)$ are pairwise disjoint, the projection lemma \ref{l:projection-packing} implies
\begin{equation*}
    \sum_{Q'\in C(Q)\cap\Delta_O}\diam(U_{Q'}) \leq 5\sum_{Q'\in C(Q)\cap\Delta_O} \sH^1\left(\pi\left(\frac{c_0}{4}Q'\right)\right) \leq 200\epsilon_1c_0\diam(Q) \leq 200\epsilon_1\diam(U_Q).
\end{equation*}
Summing this inequality over $Q\in\scL_k$, we get
\begin{align}\nonumber
    \sum_{Q'\in\scL_{k+1}\cap\Delta_O}\diam(U_{Q'}) &= \sum_{Q\in\scL_k}\sum_{Q'\in C(Q)\cap\Delta_O}\diam(U_{Q'}) \leq 200\epsilon_1\sum_{Q\in\scL_k}\diam(U_Q)\\\label{e:outer-bound}
    &\leq 200\epsilon_1\sum_{Q'\in\scL_{k}\cap\Delta_O}\diam(U_{Q'}) + 200\epsilon_1\sum_{Q'\in\scL_{k}\cap\Delta_I}\diam(U_{Q'}).
\end{align}
In order to simplify the notation, define $S_k^O := \sum_{Q'\in\scL_{k}\cap\Delta_O}\diam(U_{Q'})$ and $S_k^I := \sum_{Q'\in\scL_{k}\cap\Delta_I}\diam(U_{Q'})$. The lemma will follow from some algebraic manipulations of \eqref{e:outer-bound}. We can restate \eqref{e:outer-bound} in this notation as
\begin{equation*}
    S_{k+1}^O \leq 200\epsilon_1S_k^O + 200\epsilon_1S_k^I.
\end{equation*}
Iterating this inequality over $k$, we get
\begin{equation*}
    S_{k}^O \leq \sum_{n=0}^k(200\epsilon_1)^nS_{k-n}^I
\end{equation*}
which gives
\begin{align*}
    \sum_{Q\in\Delta_O}\diam(U_Q) &= \sum_{k=0}^\infty S_k^O \leq \sum_{k=0}^\infty \sum_{n=0}^k (200\epsilon_1)^n S_{k-n}^I = \sum_{\substack{(k,n)\in\N\times\N \\ n \leq k}}(200\epsilon_1)^n S_{k-n}^I\\
    &= \sum_{m=0}^\infty \sum_{n=0}^\infty (200\epsilon_1)^nS_m^I \lesssim \sum_{m=0}^\infty S_m^I = \sum_{Q\in\Delta_I}\diam(U_Q).
\qedhere\end{align*}

\end{proof}
With this lemma, we now concentrate on proving $\sum_{Q\in\Delta_I}\diam(U_Q) \lesssim \tilde{\mu}(\Gamma)$. Because $\Delta_I\subseteq\Delta_{2.1}$, any $Q\in\Delta_I$ has the existence of an arc $\tau_Q$ as described in the section following Lemma \ref{l:big-proj}. It follows from the neighborhood containment of $\gamma_Q$ in Lemma \ref{l:straightcenter} that there exists a subarc $\zeta_Q\subseteq\tau_Q\cap (1-c_0)2Q \subseteq \gamma(\overline{M}_\pi)$ such that 
\begin{align*}
    \Diam(\zeta_Q) &\geq \frac{1}{10}\diam(2Q).
\end{align*}
These properties imply
\begin{equation}\label{e:zeta-big-mu}
    \tilde{\mu}(\zeta_Q) \geq \frac{1}{5}\diam(2Q).
\end{equation}
Fix a level $\scL_k^I := \scL_k\cap\Delta_I$ and define an equivalence relation on $\scL_k^I$ by putting $Q\sim Q'$ if and only if there exists a collection $\{Q_n\}_{n\geq1}\subseteq \scL_k^I$ such that $\zeta_{Q_i}\cap \zeta_{Q_{i+1}} \not=\varnothing$ while both $\zeta_{Q}\cap \bigcup_{n\geq1}\zeta_{Q_n}\not=\varnothing$ and $\zeta_{Q'}\cap \bigcup_{n\geq1}\zeta_{Q_n}\not=\varnothing$. That is, $Q\sim Q'$ if and only if $\zeta_Q$ and $\zeta_{Q'}$ can be connected by a connected path of $\zeta$ arcs from $\scL_k^I$. This partitions $\scL_k^I$ into equivalence classes $\scL_k^I = \bigcup_{i\in I_k}\scC_{k,i}$. In each equivalence class $\scC_{k,i}$, there exists a ball $Q^M_{k,i}$ of maximal diameter. The arc $\zeta_{Q^M_{k,i}}$ dominates the sum of diameters of balls in this equivalence class in the sense of the following lemma:
\begin{lem}\label{l:maximal-tau-bound}
\begin{equation*}
    \sum_{Q\in\scC_{k,i}} \diam(U_{Q}) \lesssim \tilde{\mu}(\zeta_{Q^M_{k,i}}).
\end{equation*}
\end{lem}
\begin{proof}
Define $\zeta_{k,i} \vcentcolon= \bigcup_{Q\in\scC_{k,i}}\zeta_Q$. We claim that for any $Q\in\scC_{k,i}$,
\begin{align*}
    0 < \dist(\pi(x_Q),\pi(\Image(\zeta_{k,i}))) &\leq \diam(2Q) \leq \diam(2Q^M_{k,i}).
\end{align*}
Indeed, to prove the left inequality, suppose that $\dist(\pi(x_Q),\pi(\Image(\zeta_{k,i}))) = 0$. Because $\zeta_Q \subseteq \gamma(\overline{M}_\pi)$ for any $Q\in\scL_k$, we know that $\zeta_{k,i}\subseteq \gamma(\overline{M}_\pi)$. Therefore, there exists $\phi\in\Phi$ such that $\zeta_{k,i}\subseteq \phi$ and $\ell(\phi) \geq \ell(\zeta_{Q^M_{k,i}}) \geq \frac{1}{10}\diam(2Q^M_{k,i}) \geq \diam(U_Q)$. Recalling the definition of $N(\Phi)$ (see Definition \ref{def:mu-tilde}), it follows from the fact that $\sup_{x\in U_Q}\dist(\pi(x),\pi(\Image(\phi))) \leq \diam(U_Q)$ that $\gamma^{-1}(U_Q)\subseteq N(\Phi)$, implying $\tilde{\mu}(U_Q) \geq \ell(U_Q)$ which contradicts the fact that $Q\in\Delta_{2.1.2}$. For the right inequality above, notice that $\zeta_Q\subseteq 2Q$ so that $\zeta_{k,i}\cap 2Q\not=\varnothing$.
Therefore, by Lemma \ref{l:projection-packing}, $\{\pi\left(\frac{c_0}{4}Q\right)\}_{Q\in\scC_{k,i}}$ is a collection of pairwise disjoint intervals of total length less than $10\diam(Q^M_{k,i})$. This means
\begin{align*}
    \sum_{Q\in\scC_{k,i}}\diam(U_Q) \leq 5\sum_{Q\in\scC_{k,i}}\sH^1\left(\pi\left(\frac{c_0}{4}Q\right)\right) \leq 50\diam(Q^M_{k,i}) \leq 250\tilde{\mu}(\zeta_{Q^M_{k,i}})
\end{align*}
using \eqref{e:zeta-big-mu} in the final inequality.
\end{proof}
The following lemma gives the reason for restricting this argument to the inner cores.
\begin{lem}\label{l:zeta-disjoint}
The arcs in the collection $\{\zeta_{Q^M_{k,i}}\}_{k\geq0,i\in I_k}$ have pairwise disjoint images. As a result,
\begin{equation*}
    \sum_{Q\in\Delta_I}\diam(U_Q) \lesssim \tilde{\mu}(\Gamma).
\end{equation*}
\end{lem}
\begin{proof}
Fix $k,k' \geq 0$ and $i\in I_k,\ i'\in I_{k'}$. Because the cores of balls in $\scL_k$ are pairwise disjoint for any $k\geq0$, we can assume $k > k' \geq 0$. For ease of notation, let $Q_1 := Q^M_{k,i},\ Q_2 := Q^M_{k',i'}$, and $Q'=P(Q_1)$ which exists because $k > 0$. Suppose by way of contradiction that $\Image(\zeta_{Q_1})\cap\Image(\zeta_{Q_2})\not=\varnothing$. We first claim that $U_{Q'}\cap U_{Q_2} = \varnothing$. Indeed, further suppose by way of contradiction that $U_{Q_2}\cap U_{Q'} \not=\varnothing$. Then $k < k'$ implies $Q'\in C^m(Q_2)$ for some $m \geq 0$. Because $Q_1\in \Delta_I$ and because $\diam(Q_2) \geq \diam(Q') > \diam(Q_1)$, we have
\begin{align*}
    2Q_1 \subseteq (1-5\epsilon_1)c_0Q', \text{ and }
\end{align*}
\begin{equation*}
    \diam(2Q_1) \leq \epsilon_1\diam(U_{Q'}).
\end{equation*}
We conclude $2Q_1\subseteq U_{Q'} \subseteq U_{Q_2}$, which contradicts $\Image(\zeta_{Q_1})\cap\Image(\zeta_{Q_2})\not=\varnothing$ because $\zeta_{Q_2}\cap U_{Q_2} = \varnothing$ by definition as a subarc of $\tau_{Q_2}$.

From this claim, we see that $\dist(2Q_1, x_{Q_2}) \geq 4\epsilon_1\diam(U_{Q'})$ so that we can again conclude $\diam(Q_2) \geq \diam(Q')$, for otherwise we would have $2Q_1\cap2Q_2 = \varnothing$ which is in contradiction to our starting assumption that $\Image(\zeta_{Q_1})\cap\Image(\zeta_{Q_2})\not=\varnothing$. Now, because $\zeta_{Q_2},\zeta_{Q_1}\subseteq \gamma(M_\pi)$ and $\Image(\zeta_{Q_2})\cap\Image(\zeta_{Q_1})\not=\varnothing$, we can conclude that there exists a bend $\phi\in\Phi$ such that $\Image(\zeta_{Q_1})\cup\Image(\zeta_{Q_2})\subseteq \Image(\phi)$, hence $\ell(\phi) \geq \ell(\zeta_{Q_2}) \geq \frac{1}{10}\diam(2Q_2)$. Because $2Q_1\cap 2Q_2 \not=\varnothing$, this implies $\gamma^{-1}(Q_1)\subseteq N(\Phi)$ so that $\tilde{\mu}(U_{Q_1}) \geq \ell(U_{Q_1})$, contradicting the fact that $Q\in\Delta_{2.1.2}$ and implying our assumption that $\Image(\zeta_{Q_1})\cap\Image(\zeta_{Q_2})\not=\varnothing$ must be false. This proves the first claim of the lemma. Using Lemma \ref{l:maximal-tau-bound}, we get
\begin{equation*}
    \sum_{Q\in\Delta_I}\diam(U_Q) = \sum_{k \geq 0}\sum_{i\in\scC_{k,i}}\sum_{Q\in\scC_{k,i}}\diam(U_Q) \lesssim \sum_{k \geq 0}\sum_{i\in\scC_{k,i}}\tilde{\mu}(\zeta_{Q^M_{k,i}}) \leq \tilde{\mu}(\Gamma).\qedhere
\end{equation*}
\end{proof}
Now that we have controlled the inner cores, we can finish the proof of the bound for $\Delta_{2.1.2}$ and of the proof of Theorem \ref{t:thmA}.

\begin{prop}
\begin{equation*}
    \sum_{Q\in\Delta_{2.1.2}}\beta_\Gamma(Q)^2\diam(Q) \lesssim_A \sum_{Q\in\Delta_{2.1.2}}\diam(U_Q) \lesssim_J \tilde{\mu}(\Gamma).
\end{equation*}
\end{prop}
\begin{proof}
Using Lemmas \ref{l:deltaO-bound} and \ref{l:zeta-disjoint}, we get
\begin{equation*}
    \sum_{Q\in\Delta_{2.1.2}}\diam(U_Q) = \sum_{j=1}^J\sum_{Q\in\Delta_{2.1.2}^j}\diam(U_Q) \lesssim_J \sum_{Q\in\Delta_I}\diam(U_Q) + \sum_{Q\in\Delta_O}\diam(U_Q) \lesssim \tilde{\mu}(\Gamma).
\qedhere\end{equation*}
\end{proof}
This completes the proof of Theorem \ref{t:thmA}.
\begin{remark}[General rectifiable arcs]\label{rem:rect-almost-flat}
Given a rectifiable arc $\Gamma_r$ with arc length parameterization $\gamma_r$, one can proceed as for Jordan arcs and define the measure $\mu$ by $d\mu\vcentcolon=(\gamma_r)_*(\rho\ d\ell)$. Lemma \ref{l:bendconnected} likely holds, and one can define bends and likely carry out a similar program to that of Section \ref{subsec:almost-flat-arcs-gamma} to show that $\sum_{Q\in\scA}\beta_{\Gamma_r}(Q)^2\diam(Q)\lesssim_A\ell(\Gamma_r) - \crd(\Gamma_r)$. This is importantly weaker than the more desirable inequality
\begin{equation}\label{e:rect-almost-flat-beta}
    \sum_{Q\in\scA}\beta_{\Gamma_r}(Q)^2\diam(Q)\lesssim_A\sH^1(\Gamma_r) - \crd(\Gamma_r).
\end{equation}
If one would like to achieve \ref{e:rect-almost-flat-beta} via methods similar to those used here, one likely needs a stronger definition of $\mu$. For any $t\in I$, set $m(t)\vcentcolon=\inf\{\gamma_1'(s) : \gamma(s) = \gamma(t)\}$. A more prudent choice of $\mu$ might be something like
\begin{align*}
    d\mu_r\vcentcolon= \sigma_rd\sH^1
\end{align*}
where
\begin{equation*}
    \sigma_r(x)\vcentcolon= \begin{cases}
        1-\gamma_1'(t), & \gamma_1'(t) \text{ exists and $\gamma_1'(t) = \inf\{\gamma_1'(s) : s\in\gamma^{-1}(x)\}$}\\
        0 &\text{otherwise}.
    \end{cases}
\end{equation*}
That is, we assign to $x$ the maximal $\rho$ value achieved on $\gamma^{-1}(x)$. We do not investigate this approach further here.
\end{remark}
\end{subsubsection}

\end{subsection}
\end{section}

\begin{section}{Theorem \ref{t:thmB}}\label{sec:thm-b}
In this section, we show how making minor modifications to the proof of the $\gtrsim$ direction of Theorem 1.3 in $\protect\cite{Bi20}$ gives a proof of Theorem \ref{t:thmB}. First, we need a slightly weaker version of Theorem \ref{t:thmB}.
\begin{thm}[\protect\cite{Bi20} Theorem 1.1 in $\R^n$]\label{t:lengthminusdiam}
Let $\Gamma\subseteq \ell_2$ be a rectifiable Jordan arc. For any multiresolution family $\scH$ associated to $\Gamma$ with inflation factor $A > 30$, we have
\begin{equation*}
    \ell(\Gamma) - \diam(\Gamma)  \lesssim \sum_{Q\in\scH}\beta_\Gamma(Q)^2\diam(Q).
\end{equation*}
\end{thm}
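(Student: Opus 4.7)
The plan is to adapt Bishop's proof of his Theorem 1.1 for $\R^n$ to our setting, replacing dyadic cubes by balls in the multiresolution family. As in the reduction from Section 1.4.2, it suffices to prove the inequality for Jordan arcs $\Gamma \subseteq \R^d$ with constants independent of $d$, working with a multiresolution family $\hat{\sG}$ obtained from nets $\{X_n\}$ on $\Gamma$. The point of insisting on dimension-free constants is precisely that Bishop's proof in $\R^n$ loses such control through its reliance on the cardinality of dyadic cubes intersecting $\Gamma$, which grows with $d$.

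Bishop's argument decomposes the excess length $\ell(\Gamma) - \diam(\Gamma)$ into contributions at each dyadic scale, bounding each contribution via a Pythagorean-type estimate that compares the arc length of a subarc to its chord length. In the $\R^n$ setting, for each such subarc he selects a dyadic cube of comparable diameter containing it and bounds the local excess by $\beta(Q)^2 \diam(Q)$ for that cube. We adapt this by selecting, for each relevant subarc at scale $2^{-n}$, a ball $B \in \hat{\sG}$ with $\rad(B) = A \cdot 2^{-n}$ centered at a nearby point of $X_n$. The $2^{-n}$-net property ensures such a ball exists and contains the subarc (possibly after inflating by a bounded factor), while the net spacing gives a dimension-free bound on how many subarcs can share the same ball. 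The Pythagorean estimates and the summation over scales then carry over with only notational changes.

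The main obstacle is combinatorial: Bishop's argument relies on the disjoint tiling structure of dyadic cubes at each scale, whereas balls in $\hat{\sG}$ have bounded but nontrivial overlap. To recover enough disjointness to carry out the bookkeeping, one should pass to the cores $U_Q$ from Proposition 1.5, which give pairwise disjoint pieces associated to each ball within a fixed family and satisfy the nesting property (4) across scales. With care, the resulting overlap constants depend only on $A$ and the core parameters, not on $d$, which is exactly the gain needed to replace Bishop's ambient dyadic framework by the intrinsic multiresolution framework. Once this adaptation is verified, the inequality follows by summing the scale-wise contributions in direct analogy to Bishop's argument.
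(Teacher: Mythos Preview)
Your outline identifies the right difficulty—replacing the dyadic tiling by something intrinsic to $\Gamma$ with dimension-free overlap—but the proposed fix via the cores $U_Q$ does not work. The inductive scheme you want requires, at each scale $n$, a quantity $S_n$ with three properties: $S_n \to \ell(\Gamma)$ as $n\to\infty$; $S_{n_0}\le\diam(\Gamma)$ at the top scale; and $S_{n+1}-S_n$ is controlled by $\sum\beta(Q)^2\diam(Q)$ over balls of comparable scale. In Bishop's proof, $S_n$ is essentially $\sum_Q \diam(\text{piece of }\Gamma\text{ in }Q)$ over a dyadic partition, and the fact that cubes \emph{tile} is what makes $S_n\uparrow\ell(\Gamma)$. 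The cores $U_Q$ are pairwise disjoint within a fixed family, but they do not cover $\Gamma$: each $U_Q$ has diameter $\simeq c_0\diam(Q)$ with $c_0=\tfrac{1}{64A}$, so at a fixed scale the cores leave most of $\Gamma$ uncovered, and $\sum_Q\diam(U_Q)$ carries no obvious relation to $\ell(\Gamma)$. Using the full balls instead recovers covering but introduces an overlap factor $\simeq A$ that cannot be telescoped away.

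The paper resolves this by abandoning both cubes and cores and instead using \emph{Voronoi cells} $V_n(x)=\{y\in\Gamma:|y-x|\le|y-z|\ \forall z\in X_{nJ}\}$ associated to the coarse nets $X_{nJ}$. These cells genuinely partition $\Gamma$, so $S_n:=\sum_{x\in X_{nJ}}\diam(V_n(x))$ satisfies $\limsup S_n\ge\ell(\Gamma)$ and $S_0\le\diam(\Gamma)$. The inductive step is then proved by separating net points into ``flat'' and ``non-flat'' according to $\beta(B(x,10\cdot 2^{-nJ}))$, constructing a parent map $P:X_{(n+1)J}\to X_{nJ}$, and using a projection/overlap estimate (Lemma~8.3 of \cite{BS17}) to control the excess $\sum\diam(V_{n+1}(y))-\diam(V_n(x))$ over flat parents by $\beta(Q_x)^2\diam(Q_x)$. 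The nontrivial geometric point—absent from your sketch—is bounding the overlap of projections of adjacent child cells onto the diameter segment of the parent, which comes out to $O(\beta(Q_x)^2\diam(Q_x))$ by an elementary angle computation. None of these ingredients are supplied by the core construction.
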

\begin{proof}
The proof is similar in form to Bishop's proof in $\R^n$, but we construct coverings of the curve by pieces of $\Gamma$ inside Voronoi cells centered at net points rather than dividing convex hulls of pieces of the curve along diameter segments. Assume $\diam(\Gamma) = 1$. Define
\begin{equation*}
    \sV_n \vcentcolon= \{V_n(x):x\in X_{n}\}
\end{equation*}
where, given any $x\in X_{n}$,
\begin{equation*}
    V_n(x) \vcentcolon= \{y\in\Gamma: \forall z\in X_{n},\ |x-y| \leq |z-y|\}.
\end{equation*}
Since $X_{n}$ is a $2^{-n}$-net centered on $\Gamma$, it is clear that
\begin{equation}\label{e:vordiam}
    2^{-n-1} \leq \diam(V_n(x)) < 2^{-n+1}.
\end{equation}
By definition, for any $n\geq 0$ we also have $\Gamma = \bigcup_{x\in X_{n}} V_n(x)$ so that
\begin{equation*}
    \ell(\Gamma) = \sH^1(\Gamma) \leq \limsup_{n\rightarrow\infty}\sum_{x\in X_{n}}\diam(V_n(x)).
\end{equation*}
This means is suffices to prove
\begin{equation}\label{e:lengthminusdiamfinal}
    \sum_{x\in X_{n}}\diam(V_n(x)) \leq \diam(\Gamma) + C\sum_{Q\in\scH}\beta_\Gamma(Q)^2\diam(Q)
\end{equation}
for any $n\geq0$ and some $C>0$. We will show
\begin{equation}\label{e:diamrecurse}
    \sum_{x\in X_{n}} \diam(V_n(x)) \leq \sum_{y\in X_{n-1}}\diam(V_{n-1}(y)) + C^\prime\sum_{Q\in\scG_n}\beta_\Gamma(Q)^2\diam(Q)
\end{equation}
where each ball $Q\in\scH$ will only appear in $\scG_n$ for a bounded number of values of $n$. We can prove Theorem \ref{t:lengthminusdiam} by repeatedly applying inequality (\ref{e:diamrecurse}). Indeed,
\begin{align*}
    \sum_{x\in X_{n}} \diam(V_n(x)) &\leq \sum_{y\in X_{n-1}}\diam(V_{n-1}(y)) + C^\prime\sum_{Q\in\scG_n}\beta_\Gamma(Q)^2\diam(Q)\\
    &\leq \sum_{y\in X_{n-2}}\diam(V_{n-2}(y)) +  C^\prime\sum_{Q\in\scG_{n-1}}\beta_\Gamma(Q)^2\diam(Q) + C^\prime\sum_{Q\in\scG_n}\beta_\Gamma(Q)^2\diam(Q)\\
    &\ \vdots\\
    &\leq \diam(V_0(x_0)) + C^\prime\sum_{k=1}^n\sum_{Q\in\scG_k}\beta_\Gamma(Q)^2\diam(Q)\\
    &\leq \diam(\Gamma) + C\sum_{Q\in\scH}\beta_\Gamma(Q)^2\diam(Q).
\end{align*}

We now turn to proving \eqref{e:diamrecurse}. Fix $\epsilon < 2^{-10}$. For any net point $x\in X_{n}$, call $x$ \textit{flat} if $\beta_\Gamma(B(x,10\cdot2^{-n})) < \epsilon$ and call $x$ \textit{non-flat} if $\beta_\Gamma(B(x,10\cdot2^{-n})) \geq \epsilon$. We will construct a function $P:\cup_nX_{n}\rightarrow \cup_n X_{n}$ which assigns each $y\in X_{n+1}$ to a parent $P(y)\in X_{n}$. Since $\diam(\Gamma) = 1$, $\sV_0 = \{V_0(x_0)\}$ so for any $y\in X_1$, define $P(y) = x_0$. Fix $n > 0$ and a point $y\in X_{n+1}$.
If there exists $x'\in X_{n}$ such that $x'$ is non-flat and $y\in V_n(x')$, then define $P(y) = x'$. Otherwise, every $x\in X_{n}$ such that $y\in V_n(x)$ is flat. Choose one such $x$ and let $x',y'\in V_n(x)$ such that $\diam(V_n(x)) = |x'-y'|$. We define $d_x\vcentcolon=[x',y']$ and call $d_x$ a diameter segment of $V_n(x)$. Let $\pi_{d_x}:\ell_2\rightarrow \R$ be the orthogonal projection onto the line containing $d_x$. Since $\epsilon$ is small, we can write
\begin{align*}
    X_{n}\cap B(x,10\cdot2^{-n}) &= \{v_1,\ldots, v_N\}\\
    X_{n+1}\cap V_n(x) &= \{u_1,\ldots, u_M\} 
\end{align*}
where 
\begin{align*}
    \pi_{d_x}(v_1) &< \ldots < \pi_{d_x}(v_N),\\
    \pi_{d_x}(u_1) &< \ldots < \pi_{d_x}(u_M)
\end{align*}
We define $E_x \vcentcolon= \{u_1, u_M\}$. If $y\not\in E_x$, then define $P(y) = x$. If $y\in E_x$ we define $P(y)$ dependent on the behavior of points adjacent to $x$ in $X_{n}$. Assuming $x = v_j$ with $0 < j < N$, suppose first that $y = u_1$. Then if $v_{j-1}$ is non-flat, define $P(y) = v_{j-1}$. Similarly, if $y = u_M$ and $v_{j+1}$ is non-flat, then put $P(y) = v_{j+1}$. Otherwise, put $P(y) = x$.

With the function $P$ defined, we write
\begin{align*}
    \sum_{y\in X_{n+1}} \diam(V_{n+1}(y)) = \sum_{\substack{y\in X_{n+1} \\ P(y)\ \text{flat}}} \diam(V_{n+1}(y)) + \sum_{\substack{z\in X_{n+1} \\ P(z)\  \text{non-flat}}} \diam(V_{n+1}(z)).
\end{align*}
If $P(z)$ is non-flat, then $|z - P(z)| < 4\cdot 2^{-n}$ so that $B(P(z), 10\cdot 2^{-n}) \subseteq B(z,A2^{-n-1})$ because $A > 30$. Hence, $\beta_\Gamma(B(z,A2^{-n-1}))\gtrsim_A \epsilon$. Using (\ref{e:vordiam}), this means
\begin{align}
    \sum_{\substack{z\in X_{n+1} \\ P(z)\  \text{non-flat}}}& \diam(V_{n+1}(z)) \lesssim_{\epsilon, A} \sum_{\substack{z\in X_{n+1} \\ P(z)\  \text{non-flat}}} \beta_\Gamma(B(z,A2^{-n-1}))^2\diam(B(z,A2^{-n-1})) \nonumber\\
    &\leq \sum_{\substack{x\in X_{n}\\x\ \text{non-flat}}}\left[\diam(V_n(x)) + C^\prime \sum_{ P(z)=x}\beta_\Gamma(B(z,A2^{-n-1}))^2\diam(B(z,A2^{-n-1}))\right]\nonumber\\
    &\leq \sum_{\substack{x\in X_{n}\\x\ \text{non-flat}}}\diam(V_n(x)) + C^\prime\sum_{\rad(Q)=A2^{-n-1}} \beta_\Gamma(Q)^2 \diam(Q).
\end{align}

Now, let $x\in X_{n}$ be flat. We will construct a set $\seg(x)$ of subsets of diameter segments of the sets $\{V_{n+1}(y)\}$ for $y$ with $P(y)$ flat. For $y\in X_{n+1}$ with $P(y)$ flat, let $d_y$ be a diameter segment. If $y\in X_{n+1}\cap V_n(x)$ but $y\not\in E_x$, then put $d_y$ into $\seg(x)$. If $y = u_1\in E_x$, then we have the decomposition
\begin{equation*}
    V_{n+1}(y) = (V_{n+1}(y) \cap V_n(x)) \cup (V_{n+1}(y) \cap V_n(v_{j-1}))
\end{equation*}
because $\epsilon$ is small. Put the segment $d_y\cap V_n(v_{j-1})$ in $\seg(v_{j-1})$ and the segment $d_y \cap V_n(x)$ in $\seg(x)$. We similarly handle the case when $y = u_N$. In this case, put the segment $d_y \cap V_n(x)$ in $\seg(x)$ and the segment $d_y \cap V_n(v_{j+1})$ in $\seg(v_{j+1})$. With these sets constructed, we can now write
\begin{equation}\label{e:vordiamdecomp}
    \sum_{\substack{y\in X_{n+1} \\ P(y)\ \text{flat}} }\diam(V_{n+1}(y)) = \sum_{\substack{y\in X_{n+1} \\ P(y)\ \text{flat}}}d_y = \sum_{\substack{x\in X_{n} \\ x\ \text{flat}}}\sum_{s\in\seg(x)} \sH^1(s).
\end{equation}
With (\ref{e:vordiamdecomp}) in place, we only need to give an appropriate bound for $\sum_{s\in\seg(x)}\sH^1(s)$. Define $Q_x = B(x,A2^{-n})$. We claim
\begin{equation}\label{e:vorsegest1}
    \sum_{s\in\seg(x)}\sH^1(s) \leq \diam(V_n(x)) + C\beta_\Gamma(Q_x)^2 \diam(Q_x)
\end{equation}
for some large $C>0$. In order to prove this statement, we first state a lemma given in \protect\cite{BS17}:

\begin{lem}(\protect\cite{BS17} Lemma 8.3)
Suppose that $V\subseteq\R^n$ is a $1$-separated set with $\#V\geq 2$ and there exist lines $\ell_1$ and $\ell_2$ and a number $0\leq \alpha\leq 1/16$ such that $$\dist(v,\ell_i)\leq \alpha\quad\text{for all $v\in V$ and $i=1,2$}.$$ Let $\pi_i$ denote the orthogonal projection onto $\ell_i$. There exist compatible identifications of $\ell_1$ and $\ell_2$ with $\R$ such that $\pi_1(v')\leq \pi_1(v'')$ if and only if $\pi_2(v')\leq \pi_2(v'')$ for all $v',v''\in V$. If $v_1$ and $v_2$ are consecutive points in $V$ relative to the ordering of $\pi_1(V)$, then 
\begin{equation} 
\sH^1([u_1,u_2]) < (1+ 3\alpha^2)\cdot \sH^1([\pi_1(u_1),\pi_1(u_2)]) \quad\text{for all }[u_1,u_2]\subseteq[v_1,v_2].
\end{equation}
Moreover, \begin{equation} 
\sH^1([y_1,y_2]) < (1+12\alpha^2)\cdot\sH^1([\pi_1(y_1),\pi_1(y_2)])\quad\text{for all }[y_1,y_2]\subseteq\ell_2.
\end{equation}
\end{lem}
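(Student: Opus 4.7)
The plan is to show that the $1$-separation combined with proximity of $V$ to both lines forces $\ell_1$ and $\ell_2$ to be nearly parallel, and then to deduce the three conclusions (compatible orderings, and the two inflation inequalities) from careful trigonometry. First I would pick any pair $v,w\in V$ with $|v-w|\geq 1$ and observe that, since both endpoints lie within distance $\alpha$ of each $\ell_i$, the chord $[v,w]$ makes angle $\theta_i$ with $\ell_i$ satisfying $\sin\theta_i \leq 2\alpha/|v-w|\leq 2\alpha$. Applying the triangle inequality for angles between directions, the angle $\phi$ between $\ell_1$ and $\ell_2$ then satisfies $\phi\leq\theta_1+\theta_2\leq 2\arcsin(2\alpha)$, which gives the clean bound $\cos\phi\geq 1-8\alpha^2$.

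Second, I would establish the compatible orderings. Orient the lines by unit directions $u_1,u_2$ chosen so that $\pi_1(w)>\pi_1(v)$ and $\pi_2(w)>\pi_2(v)$. For any $v',v''\in V$, decomposing $v'-v''$ into components parallel and perpendicular to $\ell_i$, the $1$-separation gives $|\pi_i(v'-v'')|\geq\sqrt{|v'-v''|^2-4\alpha^2}>0$, so both projections separate points. For sign compatibility, write $u_2=\cos\phi\cdot u_1+\sin\phi\cdot n$ with $n\perp u_1$; then for $\vec{d}=v''-v'$,
\begin{equation*}
\vec{d}\cdot u_2 \geq \cos\phi\,|\vec{d}\cdot u_1| - |\sin\phi|\,|\vec{d}|,
\end{equation*}
and since $|\vec{d}\cdot u_1|/|\vec{d}|\geq \sqrt{1-4\alpha^2}$ while $|\sin\phi|\leq 4\alpha\sqrt{1-4\alpha^2}$, the hypothesis $\alpha\leq 1/16$ suffices to make the right-hand side positive whenever $\vec{d}\cdot u_1>0$. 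Thus the signs agree and the orderings are compatible.

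Third, I would prove the two inflation inequalities by elementary trigonometry. For the first, consecutive $v_1,v_2\in V$ relative to $\pi_1$ produce a chord whose direction makes angle $\theta\leq \arcsin(2\alpha)$ with $\ell_1$, so for any subsegment $[u_1,u_2]\subseteq[v_1,v_2]$ the ratio $\sH^1([u_1,u_2])/\sH^1([\pi_1(u_1),\pi_1(u_2)])$ equals $1/\cos\theta\leq 1/\sqrt{1-4\alpha^2}$. A direct expansion $(1+3\alpha^2)^2(1-4\alpha^2)=1+2\alpha^2-15\alpha^4-36\alpha^6>1$ for $\alpha\leq 1/16$ confirms the target bound $1+3\alpha^2$. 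For the second inequality, a segment $[y_1,y_2]\subseteq\ell_2$ projects onto $\ell_1$ with the single scaling factor $\cos\phi$, so the ratio is $1/\cos\phi$, and the bound $\cos\phi\geq 1-8\alpha^2$ together with $(1-8\alpha^2)(1+12\alpha^2)=1+4\alpha^2-96\alpha^4>1$ for $\alpha\leq 1/16$ yields the required bound $1+12\alpha^2$.

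The main obstacle I anticipate is the sign-compatibility step for the orderings: while the geometric picture of two nearly parallel lines and a $1$-separated cloud near both makes the result intuitively obvious, turning this into a rigorous inequality requires an honest two-variable trigonometric estimate bounding $\tan\phi$ against $\sqrt{1-4\alpha^2}$, where the numerical role of the hypothesis $\alpha\leq 1/16$ becomes essential. Everything else reduces to single-angle estimates that follow from the same basic decomposition of vectors into components parallel and perpendicular to the approximating lines.
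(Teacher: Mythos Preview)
The paper does not prove this lemma; it is quoted verbatim from \cite{BS17} and used as a black box, so there is no ``paper's own proof'' to compare against. Your proposal supplies an argument the paper omits.

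Your argument is correct. The key observation---that the $1$-separation of $V$ combined with both lines lying within $\alpha$ of a common chord forces the directed angle $\phi$ between $\ell_1$ and $\ell_2$ to satisfy $\cos\phi\geq 1-8\alpha^2$---is exactly the right starting point, and your derivation via $\phi\leq 2\arcsin(2\alpha)$ and the double-angle formula is clean. The sign-compatibility step, which you correctly flag as the most delicate, is handled properly: orienting $u_1,u_2$ so that both have positive inner product with the reference chord $w-v$ forces the \emph{directed} angle to obey the same bound, and the inequality $\cos\phi\cdot\sqrt{1-4\alpha^2}>|\sin\phi|$ reduces to $\cos\phi>4\alpha$, which holds comfortably when $\alpha\leq 1/16$ since $\cos\phi\geq 1-8\alpha^2\geq 31/32>1/4$. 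The two inflation bounds then follow from the single-angle estimates $1/\cos\theta$ and $1/\cos\phi$ together with the polynomial checks you give, both of which are arithmetically correct.

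One minor remark: in the decomposition $u_2=\cos\phi\,u_1+\sin\phi\,n$ you should note that $n$ lies in the plane spanned by $u_1,u_2$; the vector $\vec{d}$ may have a component orthogonal to that plane, but this component contributes nothing to either $\vec{d}\cdot u_1$ or $\vec{d}\cdot u_2$, so the displayed inequality is unaffected. This does not change the proof, only its presentation.
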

Applying this lemma with $\ell_1 = d_x$ and $\ell_2 = d_y$, for any segment $[s_1,s_2]\subseteq d_y$, we have
\begin{equation*}
    \sH^1([s_1,s_2]) < (1 + C\beta_\Gamma(Q_x)^2\diam(Q_x))\sH^1([\pi_{d_x}(s_1),\pi_{d_x}(s_2)]).
\end{equation*}
Enumerate $\seg(x) = \{s_1,\ldots, s_N\}$. With this, we can write
\begin{align}
    \sum_{s\in\seg(x)}\sH^1(s) &< (1 + C\beta_\Gamma(Q_x)^2\diam(Q_x))\sum_{s\in\seg(x)}\sH^1(\pi_{d_x}(s)) \nonumber\\
    &\leq (1 + C\beta_\Gamma(Q_x)^2\diam(Q_x))\left( \sH^1(d_x) + 2\sum_{i=1}^{N-1}\sH^1(\pi_{d_x}(s_i)\cap \pi_{d_x}(s_{i+1})) \right) \nonumber\\
    &= (1 + C\beta_\Gamma(Q_x)^2\diam(Q_x))\left( \diam(V_n(x)) + 2\sum_{i=1}^{N-1}\sH^1(\pi_{d_x}(s_i)\cap \pi_{d_x}(s_{i+1}))\right).\label{e:vorsegest2}
\end{align}
Because $x$ is flat, $\#\seg(x)$ is bounded above by a universal constant, so we only need to show that 
\begin{equation}\label{e:voroverlap}
   \sH^1(\pi_L(s_i)\cap \pi_L(s_{i+1})) \leq C^\prime\beta_\Gamma(Q_x)^2\diam(Q_x). 
\end{equation}
for some $C^\prime>0$. It suffices to bound the length of the overlap between the projections of consecutive Voronoi cells $V_{n+1}(u_i), V_{n+1}(u_{i+1})$ within the tube of radius $2\beta_\Gamma(Q_x)\diam(Q_x)$ around $d_x$. The boundary between the cells is the intersection of this tube with the hyperplane of points of equal distance from both $u_i$ and $u_{i+1}$. A simple geometric estimate of the type carried out in \cite{BS17} pages 41 and 42 gives \eqref{e:voroverlap} (also see Figure \ref{fig:VCEstimate}). Combining (\ref{e:vorsegest2}) and (\ref{e:voroverlap}) gives (\ref{e:vorsegest1}). Applying (\ref{e:vorsegest1}) to (\ref{e:vordiamdecomp}), we can finally write
\begin{align*}
    \sum_{y\in X_{n+1}} \diam(V_{n+1}(y)) &= \sum_{\substack{y\in X_{n+1} \\ P(y)\ \text{flat}}} \diam(V_{n+1}(y)) + \sum_{\substack{z\in X_{n+1} \\ P(z)\  \text{non-flat}}} \diam(V_{n+1}(z))\\
    &\leq \sum_{\substack{x\in X_{n}\\x\ \text{non-flat}}}\diam(V_n(x)) + C^\prime\sum_{\rad(Q) = A2^{-n-1}} \beta_\Gamma(Q)^2 \diam(Q)\\
    &\quad\quad + \sum_{\substack{x\in X_{n} \\ x\ \text{flat}}}\sum_{s\in\seg(x)} \sH^1(s)\\
    &\leq \sum_{\substack{x\in X_{n}\\x\ \text{non-flat}}}\diam(V_n(x)) + C^\prime\sum_{\rad(Q) = A2^{-n-1}} \beta_\Gamma(Q)^2 \diam(Q)\\
    &\quad\quad + \sum_{\substack{x\in X_{n} \\ x\ \text{flat}}} \diam(V_n(x)) + C^{\prime\prime}\sum_{\rad(Q) = A2^{-n}} \beta_\Gamma(Q)^2\diam(Q)\\
    &\leq \sum_{x\in X_{n}}\diam(V_n(x)) + C\sum_{\rad(Q)\in \{A2^{-n-1}, A2^{-n}\}}\beta_\Gamma(Q)^2\diam(Q).
\end{align*}
This proves inequality (\ref{e:lengthminusdiamfinal}) and finishes the proof of Theorem \ref{t:lengthminusdiam}.
\end{proof}

\begin{figure}[h]
    \centering
    \includegraphics[scale=0.9]{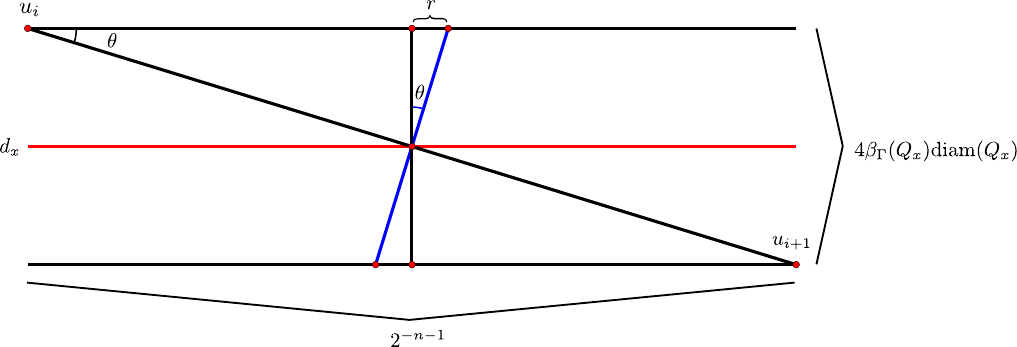}
    \caption{This figure depicts a worst case estimate for the overlap of $V_{n+1}(u_i)$ and $V_{n+1}(u_{i+1})$ along the diameter $d_x$ for $u_i,u_{i+1}\in X_{n+1}\cap V_n(x)$. The blue line depicts the hyperplane which forms the boundary between $V_{n+1}(u_i)$ and $V_{n+1}(u_{i+1})$ while the overlap is bounded above by the quantity $r$. We have $\tan \theta = \frac{2\beta_\Gamma(Q_x)\diam(Q_x)}{2^{-n-1-1}} = \frac{r}{2\beta_\Gamma(Q_x)\diam(Q_x)}$. Rearranging, we find $r = 8\beta_\Gamma(Q_x)^2\diam(Q_x)^2\cdot 2^{n+1}\lesssim_A\beta_\Gamma(Q_x)^2\diam(Q_x)$ as desired.}
    \label{fig:VCEstimate}
\end{figure}

The method for replacing $\diam(\Gamma)$ with $\crd(\Gamma)$ in Theorem \ref{t:lengthminusdiam} is nearly identical to that given in $\protect\cite{Bi20}$. Since most of the argument is dimension independent, we only need to replace certain collections of dyadic cubes with appropriate collections of balls in a multiresolution family, and switch out applications of the version of Theorem \ref{t:lengthminusdiam} proven there with Theorem \ref{t:lengthminusdiam} itself. We now give a summary of the needed modifications to Bishop's proof.

\begin{proof}(Theorem \ref{t:thmB})
We will make some amendments to Section 4 of \protect\cite{Bi20} beginning on page 15, but the vast majority of the proof is identical because Bishop's arguments are mostly dimension-independent from this point on. We reproduce most of the argument here for convenience.

Assume the $\sum_{Q\in\scH}\beta_\Gamma(Q)^2\diam(Q) < \infty$ and that $\diam(\Gamma) = 1$. Let $Q_0\in\scH$ with $2 <  \diam(Q_0) \leq 4$ so that $\Gamma \subseteq Q$. Suppose $\beta_0$ is a small positive number to be chosen below. If $\beta_\Gamma(Q_0) > \beta_0$, then we have
\begin{equation*}
    \crd(\Gamma) \leq \diam(\Gamma) = 1 \leq \frac{\beta_\Gamma(Q_0)^2}{\beta_0^2}\diam(Q_0) \lesssim \beta_\Gamma(Q_0)^2\diam(Q_0).
\end{equation*}
Therefore, by Theorem \ref{t:lengthminusdiam},
\begin{equation*}
    \ell(\Gamma) - \crd(\Gamma) \leq \diam(\Gamma) - \crd(\Gamma) + C\sum_{Q\in\scH}\beta_\Gamma(Q)^2\diam(Q) \lesssim \sum_{Q\in\scH}\beta_\Gamma(Q)^2\diam(Q).
\end{equation*}
Hence, we may assume that $\beta_\Gamma(Q_0) < \beta_0$. Let $x,y\in\Gamma$ be such that $|x-y| = \diam(\Gamma)$ and re-orient $\Gamma$ so that $x = 0$ and $y = (1,0,0,\ldots)$. Assuming $t(x) < t(y)$, let $\gamma_1 = \gamma(0,t(x))$ and $\gamma_2 = \gamma(t(y),\ell(\Gamma))$. That is, $\gamma_1$ is the subarc of $\Gamma$ from the beginning of $\Gamma$ to $x$ and $\gamma_2$ is the subarc from $y$ to the end of $\Gamma$. Observe that
\begin{equation*}
    \crd(\Gamma) \geq \diam(\Gamma) - \ell(\gamma_1) - \ell(\gamma_2)
\end{equation*}
so that applying Theorem \ref{t:lengthminusdiam} three times (using the fact that $\gamma_1,\gamma_2$ are Jordan arcs) gives
\begin{align}
    \ell(\Gamma) - \crd(\Gamma) &\leq \ell(\Gamma) - \diam(\Gamma) + \ell(\gamma_1) + \ell(\gamma_2)\nonumber\\
    &\leq C(A)\sum_{Q\in\scH}\beta_\Gamma(Q)^2\diam(Q) + \diam(\gamma_1) + \diam(\gamma_2).\label{e:curvetips}
\end{align}
This means we only need to show that $\diam(\gamma_1) + \diam(\gamma_2) \lesssim_A \sum_{Q\in\scH}\beta_\Gamma(Q)^2\diam(Q)$. Because the arguments for both arcs are the same, we only consider $\gamma_1$.

Let $\epsilon = \diam(\gamma_1) > 0$. Let $Q_1,\ldots, Q_k$ be balls with diameter going from $\diam(Q_0)$ to $\epsilon$ such that among all balls of their given radius, their center is closest to $x$. We have that $k\simeq \log(\diam(Q_0)/\epsilon)$. If any of these balls satisfies $\beta_\Gamma(Q_j) > \beta_0$, then
\begin{equation*}
    \diam(\gamma_1) \leq \frac{\beta_\Gamma(Q_j)^2}{\beta_0^2}\diam(\gamma_1) \lesssim \beta_\Gamma(Q_j)^2\diam(Q_j)
\end{equation*}
so that $\diam(\gamma_1)$ satisfies the desired bound. Hence, we assume that $\beta_\Gamma(Q_j) \leq \beta_0$ for all $1 \leq j \leq k$. Let $L_j$ be an infimizing line in the definition of $\beta_\Gamma(Q_j)$. We measure the angle that $L_j$ makes with the $x_1$-axis by translating it to intersect $0$, then measuring the angle between these lines in the (at most 2-dimensional) plane containing them.\\
\textbf{Case 1:} Assume that $L_i$ makes an angle larger than 10$\beta_0$ with the $x_1$ axis for some $i$. Since the angle between $L_0$ and $L_i$ is bounded by $C\sum_{j=1}^i\beta_\Gamma(Q_j)$ and the best line for $Q_0$ is within an angle $\beta_0$ of the $x_1$-axis, we have $\sum_{j=1}^k\beta_\Gamma(Q_j) \gtrsim \beta_0 \gtrsim 1$. The Cauchy-Schwarz inequality implies
\begin{align*}
    1 \lesssim \left( \sum_{j=1}^k\beta_\Gamma(Q_j) \right)^2 \lesssim \left( \sum_{j=1}^k\beta_\Gamma(Q_j)^22^{-j}\right) \cdot \left( \sum_{j=1}^k 2^j \right)  \simeq 2^k\sum_{i=j}^k\beta_\Gamma(Q_j)^2 2^{-j}.
\end{align*}
Therefore, $\sum_{j=1}^k\beta(Q_j) 2^{-j} \gtrsim 2^{-k} \gtrsim \epsilon$ so that
\begin{equation*}
    \epsilon = \diam(\gamma_1) \lesssim \sum_{j=1}^k \beta_\Gamma(Q_j)^2\diam(Q_j)
\end{equation*}
as desired.\\
\textbf{Case 2:} Now, assume that all of the lines $L_j,\ 1\leq j \leq k$ make angle less than $10\beta_0$ with the $x_1$-axis. Consider a subarc $\gamma_1'\subseteq\gamma_1$ that is contained in and connects the boundary components of the annulus 
\begin{equation*}
    B\left(x,\frac{1}{5}\diam(\gamma_1)\right) \bigg\backslash B\left(x, \frac{1}{10}\diam(\gamma_1)\right).
\end{equation*}
Because $\gamma_1'$ and $\gamma_1$ have comparable diameters, it suffices to bound $\diam(\gamma_1')$.

Given any $p\in\gamma_1'$, one of the following two statements holds:
\begin{enumerate}[label=(\roman*)]
    \item Every ball $Q = B(x,A2^{-n})$ with $p\in B(x,2^{-n})$ and $\diam(Q) \leq \frac{1}{10}\diam(\gamma_1)$ satisfies $\beta_\Gamma(Q) \leq \beta_0$.
    \item There exists a ball $Q_p$ of the above form such that $\beta_\Gamma(Q_p) > \beta_0$.
\end{enumerate}
We let $E\subseteq \gamma_1'$ be the set of points $p$ where a ball $Q_p$ as in (ii) exists. Since $\gamma_1'$ is rectifiable, it has tangents almost everywhere. Bishop provides the following two lemmas 

\begin{lem}(\protect\cite{Bi20} Lemma 4.1 for $\scH$)\label{l:bis-crossing}
If $p\in\gamma_1^\prime\backslash E$ and $p$ is a tangent point of $\Gamma$, then $p$ has a ``crossing property": If $Q\in\scH$ has $p\in \frac{1}{2}Q$ with $\diam(Q) \leq \frac{\diam(\gamma_1)}{10}$ then $\gamma_1$ must ``cross" $Q$ in the sense that $\gamma_1$ must connect the two components of $\partial Q\cap W$ where $W$ is a cylinder of radius $\frac{\diam(Q)}{10}$ containing $p$.
\end{lem}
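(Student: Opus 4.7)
The plan is to exploit the fact that $p\notin E$ forces $\beta(Q')\le \beta_0$ on every admissible ball in $\hat\sG$ centered near $p$, and to combine this multi-scale flatness with the first-order information given by tangency at $p$ in order to trap $\Gamma\cap Q$ inside a tube of radius $\diam(Q)/10$ about the tangent line $T_p$; connectedness of $\gamma_1$ then forces the crossing.

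\emph{First}, I would upgrade $Q$ to a $\hat\sG$-ball whose $\beta$ is known to be small. Using the nested-net structure of $\hat\sG$, pick $Q_p\in\hat\sG$ with $p\in\tfrac{1}{3}Q_p$, $Q\subseteq Q_p$, and $\diam(Q_p)\le C_A\diam(Q)\le \diam(\gamma_1)/2$. Since $p\notin E$, the definition of $E$ yields $\beta(Q_p)\le \beta_0$, so there is a line $L$ with
\[
\Gamma\cap Q\subseteq \sN_{\beta_0\diam(Q_p)}(L)\subseteq \sN_{C_A\beta_0\diam(Q)}(L).
\]

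\emph{Second}, I would show that $L$ is nearly parallel to the tangent direction $v$ at $p$. Because $p$ is a tangent point, for every $\eta>0$ there exists $r_\eta>0$ with $\Gamma\cap B(p,r_\eta)\subseteq \sN_{\eta|q-p|}(T_p)$. Applying the construction from the previous step to balls $Q'\in\hat\sG$ with $p\in\tfrac{1}{3}Q'$ and $\diam(Q')\ll r_\eta$ yields a small-scale approximating line $L'$ that must lie within angle $O(\beta_0+\eta)$ of $T_p$. A standard two-scale comparison (both $L$ and $L'$ closely approximate the same portion of $\Gamma$ passing through $p$) then upgrades this to $\angle(L,T_p)\lesssim \beta_0$. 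Choosing $\beta_0<2^{-2J}$ small enough depending only on $C_A$ gives $\Gamma\cap Q\subseteq W$.

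\emph{Finally}, since $p\in\gamma_1\cap Q$ and $\diam(Q)\le \diam(\gamma_1)/2$, the arc $\gamma_1$ leaves $Q$ on both parameter sides of $p$, so $\gamma_1\cap\partial Q$ contains at least two distinct points $a,b$, both lying in $W$ by the previous step. The set $\partial Q\cap W$ is exactly the union of the two disjoint spherical caps where the axis of $W$ pierces $\partial Q$, and the tightness of the tube together with the small-$\beta$ bound rules out backtracking along the tangent direction inside $Q$. Hence $a$ and $b$ must lie in opposite caps, i.e., in the two components of $Q\cap W$ advertised in the statement, establishing the crossing.

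The hard part will be the angle bound in the second step: this is where the multi-scale nature of $p\notin E$ is genuinely needed, to promote the single-scale estimate $\beta(Q_p)\le \beta_0$ into control of the tangent direction at scale $\diam(Q)$. Once $\angle(L,T_p)\lesssim \beta_0$ is in hand, the remainder is geometric bookkeeping with the cylinder $W$ and the endpoints of $\gamma_1\cap Q$.
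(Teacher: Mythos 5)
The paper itself does not reprove this lemma: it observes that Bishop's proof of \cite{Bi20} Lemma 4.2 goes through nearly verbatim once dyadic cubes are replaced by balls of $\hat{\sG}$, and that argument is an induction on scales. Your proposal is a self-contained substitute, but it has two genuine gaps. First, the angle bound in your second step is both unobtainable and unnecessary. The hypothesis $p\notin E$ gives $\beta(Q')\le\beta_0$ separately at each admissible scale; optimal lines at \emph{consecutive} scales then differ in direction by $O(\beta_0)$, but between the infinitesimal scale at which tangency controls $\Gamma$ and the macroscopic scale $\diam(Q)$ there are unboundedly many scales, and these $O(\beta_0)$ increments can accumulate to an arbitrarily large angle: a slowly turning, spiral-like arc has uniformly small $\beta$ in every ball centered near $p$ while its direction at scale $\diam(Q)$ is far from the tangent direction at $p$. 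So $\angle(L,T_p)\lesssim\beta_0$ cannot be deduced, and $\Gamma\cap Q$ need not lie in a cylinder about $T_p$. This does not hurt the statement, because $W$ is not required to have axis $T_p$: taking $W$ about the optimal line of an enlarged ball $Q_p\supseteq Q$ with $\beta(Q_p)\le\beta_0$, the containment $\Gamma\cap Q\subseteq W$ follows from that single-scale bound alone, with no use of tangency.

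Second, and more seriously, the actual content of the lemma --- that $\gamma_1$ joins the two \emph{opposite} caps rather than entering and leaving through the same one --- is exactly the step you assert without proof. ``The tightness of the tube together with the small-$\beta$ bound rules out backtracking'' is false at a single scale: a hairpin that runs down the tube, turns around near $p$, and runs back out the same end lies in the same thin tube and has small $\beta$ in every ball centered near $p$, and in a Hilbert space (or $\R^n$, $n\ge 3$) no planar topology forces your two exit points $a,b$ into different caps. Ruling out this U-turn is precisely where the tangency hypothesis does its work in Bishop's argument, which the paper imports: one first obtains a genuine crossing of very small balls around $p$ from the existence of the tangent (the two parameter sides of $p$ leave in opposite directions), and then propagates the crossing upward scale by scale, using at each step the $\beta_0$-tube of the next larger ball, the near-parallelism of consecutive tubes (they share the crossing subarc, which spans the smaller ball), and the fact that both components of $\gamma_1\setminus\{p\}$ are long enough to exit the larger ball. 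Your sketch replaces this induction with a one-scale assertion, so the heart of the proof is missing. Two smaller points to flag: invoking $p\notin E$ requires the enlarged ball $Q_p$ with $p\in\frac{1}{3}Q_p$ and $Q_p\supseteq Q$ to still satisfy $\diam(Q_p)\le\frac{\diam(\gamma_1)}{2}$, which fails without adjusting constants when $\diam(Q)$ is close to $\frac{\diam(\gamma_1)}{2}$; and ``$\gamma_1$ leaves $Q$ on both parameter sides of $p$'' needs both components of $\gamma_1\setminus\{p\}$ to have diameter exceeding $\diam(Q)$, which again is a matter of the constants in the definitions of $\gamma_1'$ and $E$.
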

\begin{lem}(\protect\cite{Bi20} Lemma 4.2)\label{l:Egammaprime}
$\ell(E) = \ell(\gamma_1^\prime).$
\end{lem}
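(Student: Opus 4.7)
The plan is to prove $\ell(E)=\ell(\gamma_1')$ by showing that $\ell(\gamma_1'\setminus E)=0$, adapting Bishop's proof of Lemma 4.3 in \protect\cite{Bi20} from dyadic cubes to balls in the multiresolution family $\hat{\sG}$. Since $\Gamma$ is a rectifiable Jordan arc, its arc-length parameterization $\gamma$ is differentiable at $\ell$-almost every point, so $\ell$-a.e.\ point of $\gamma_1'$ is a tangent point of $\Gamma$. It therefore suffices to prove that the set $T$ of tangent points lying in $\gamma_1'\setminus E$ has length zero.

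Fix $p\in T$ with tangent line $L_p$. Because $p\notin E$, every $Q\in\hat{\sG}$ with $p\in \frac{1}{3}Q$ and $\diam(Q)\leq \frac{1}{2}\diam(\gamma_1)$ satisfies $\beta(Q)\leq\beta_0$, so the best-approximating line for each such ball lies within a $\beta_0\diam(Q)$-neighborhood of $\gamma\cap Q$ and, because $p$ is a tangent point, these lines must all be close in direction to $L_p$. Combining this with the crossing property from Lemma 4.2, at every scale $2^{-n}\leq \frac{1}{2}\diam(\gamma_1)$ the arc $\gamma_1$ must enter and exit $Q_p^{(n)}$ through opposite ends of a thin cylinder centered on a line close to $L_p$. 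In particular, $\gamma_1$ behaves linearly in the direction $L_p$ at every scale near $p$.

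The conclusion $\ell(T)=0$ now follows from a standard density/covering argument: if $\ell(T)>0$, pick a Lebesgue density point $p^*\in T$ along $\gamma$ and use the uniform scale-invariant crossing property together with the small-$\beta$ hypothesis to show that, within a small neighborhood of $p^*$, $\gamma$ is forced to lie in a tube of slope $\beta_0$ about $L_{p^*}$ and to cross every ball in $\hat{\sG}$ intersecting that neighborhood. This in turn forces the nearby portion of $\gamma$ to be very nearly a straight segment whose direction at $p^*$ is simultaneously the tangent direction of all nearby points of $T$; the standard argument then yields a contradiction with $p^*$ lying on the subarc $\gamma_1'$ which by construction connects the inner and outer boundaries of the annulus $\{\frac{1}{10}\diam(\gamma_1)\leq |y-x|\leq\frac{1}{5}\diam(\gamma_1)\}$ while also satisfying $\beta(Q)\leq\beta_0$ at every relevant scale. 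This contradiction is exactly the content of Bishop's Lemma 4.3.

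The main obstacle is checking that the dyadic-cube machinery in Bishop's density argument has a clean counterpart in the multiresolution setting: one must ensure that for every $p\in\gamma_1'$ and every scale $n$ with $A2^{-n}\leq \frac{1}{2}\diam(\gamma_1)$ there exists $Q\in\hat{\sG}$ of radius $A2^{-n}$ with $p\in\frac{1}{3}Q$, and that the number of such $Q$ at a given scale is uniformly bounded. Both properties follow directly from the fact that $X_n$ is a $2^{-n}$-net on $\Gamma$ and $A$ is a fixed absolute constant, so after this book-keeping Bishop's proof carries over verbatim, yielding the desired $\ell(\gamma_1'\setminus E)=0$.
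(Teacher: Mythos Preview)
Your reduction to tangent points and your invocation of the crossing property (the paper's Lemma 5.3) are the right first moves, but the argument you then sketch does not produce a contradiction. You propose a Lebesgue--density argument culminating in ``the standard argument then yields a contradiction with $p^*$ lying on the subarc $\gamma_1'$ which \ldots\ connects the inner and outer boundaries of the annulus.'' There is no contradiction there: a nearly straight arc can certainly cross an annulus of thickness $\tfrac{1}{10}\diam(\gamma_1)$, and nothing in the small-$\beta$ hypothesis prevents that. The density machinery is also unnecessary; Bishop's argument shows outright that \emph{no} tangent point of $\Gamma$ can lie in $\gamma_1'\setminus E$, not merely that such points form a null set.

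The geometric obstruction you are missing is the endpoint of $\Gamma$. In the setup, $x$ is an endpoint of $\Gamma$ (and of $\gamma_1$), and the annulus is centered at $x$, so any $p\in\gamma_1'$ satisfies $|p-x|\le \tfrac{1}{5}\diam(\gamma_1)$. Choose $Q\in\hat{\sG}$ with $\diam(Q)$ close to $\tfrac{1}{2}\diam(\gamma_1)$ and center a net point near $x$; then $Q$ contains both $p$ and $x$. If $p\in\gamma_1'\setminus E$ is a tangent point, Lemma 5.3 forces $\gamma_1$ to \emph{cross} $Q$, i.e.\ to connect the two components of $Q\cap W$ for a thin cylinder $W\ni p$. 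But $\gamma_1$ (indeed $\Gamma$) terminates at $x$, which sits well inside $Q$, so $\gamma_1$ can exit $Q$ through only one end of the cylinder and cannot cross. That is the contradiction, and it is why the argument works at the specific top scale $\diam(Q)\approx\tfrac{1}{2}\diam(\gamma_1)$ rather than via any limiting or density procedure. Your final paragraph about replacing dyadic cubes by multiresolution balls is fine and is exactly the book-keeping the paper has in mind, but it only matters once the correct contradiction is in place.
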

We have changed the statement of Lemma \ref{l:bis-crossing} only by replacing the setting from $\R^n$ to $\ell_2$ and replacing dyadic cubes with balls in a multiresolution family. Bishop proves this lemma by constructing a ``dividing'' hypersurface which $\gamma_1$ can only cross once because $\gamma_1$ is a Jordan arc. It is straightforward to modify Bishop's construction by replacing $n-1$-dimensional planes in $\R^n$ with corresponding hyperplanes in $\ell_2$. Given Lemma \ref{l:bis-crossing}, Lemma \ref{l:Egammaprime} follows directly from Bishop's original argument. For the proofs of these results, we direct the reader to \cite{Bi20} (especially see Figure 6 there for a good picture of Lemma \ref{l:bis-crossing}).

Given these lemmas, we can complete the proof of Theorem \ref{t:thmB} by noting that Lemma \ref{l:Egammaprime} implies that $\gamma_1^\prime$ is nearly covered by the balls $\{Q_p\}_{p\in E}$ so that there is subcollection of distinct balls $\{Q_{p_j}\}_j$ with
\begin{equation*}
    \diam(\gamma_1^\prime) \leq 5\sum_{j}\diam(Q_{p_j}) \lesssim_{\beta_0} \sum_j \beta_\Gamma(Q_{p_j})^2\diam(Q_{p_j}) \leq \sum_{Q\in\scH} \beta_\Gamma(Q)^2\diam(Q).
\end{equation*}
Combining this result with (\ref{e:curvetips}) completes the proof of Theorem \ref{t:thmB}.
\end{proof}
\end{section}

\pagebreak

\bibliographystyle{alpha}
\bibliography{database}
\end{document}